\newtheorem{thm}{Theorem}[section]
\newtheorem{prop}[thm]{Proposition}
\newtheorem{lem}[thm]{Lemma}
\theoremstyle{definition}
\newtheorem{defn}[thm]{Definition}
\newtheorem{ex}[thm]{Example}
\theoremstyle{remark}
\newtheorem{rmk}[thm]{Remark}
\newcommand{\bC}{\mathbb{C}}
\newcommand{\bN}{\mathbb{N}}
\newcommand{\bR}{\mathbb{R}}
\newcommand{\bZ}{\mathbb{Z}}
\newcommand{\cA}{\mathcal{A}}
\newcommand{\cB}{\mathcal{B}}
\newcommand{\cH}{\mathcal{H}}
\newcommand{\cI}{\mathcal{I}}
\newcommand{\cJ}{\mathcal{J}}
\newcommand{\cK}{\mathcal{K}}
\newcommand{\cS}{\mathcal{S}}
\newcommand{\cU}{\mathcal{U}}
\newcommand{\cZ}{\mathcal{Z}}
\newcommand{\fa}{\mathfrak{a}}
\newcommand{\fg}{\mathfrak{g}}
\newcommand{\fk}{\mathfrak{k}}
\newcommand{\fp}{\mathfrak{p}}
\newcommand{\fh}{\mathfrak{h}}
\newcommand{\fn}{\mathfrak{n}}
\newcommand{\ft}{\mathfrak{t}}
\renewcommand{\Re}{\operatorname{Re}}
\DeclareMathOperator{\tr}{tr}
\DeclareMathOperator{\Aut}{Aut}
\DeclareMathOperator{\Hom}{Hom}
\DeclareMathOperator{\Lie}{Lie}
\DeclareMathOperator{\ad}{ad}
\DeclareMathOperator{\Ad}{Ad}
\DeclareMathOperator{\Id}{Id}
\DeclareMathOperator{\Tr}{Tr}
\DeclareMathOperator{\MC}{\mathsf{MC}}
\newcommand{\nbar}{\overline{n}}
\DeclareMathOperator{\PW}{\mathsf{PW}}
\newcommand{\minK}[1]{A(#1)}
\DeclareMathOperator{\GL}{GL}
\DeclareMathOperator{\SL}{SL}
\DeclareMathOperator{\SO}{SO}
\DeclareMathOperator{\End}{End}
\newcommand{\Khat}{\widehat{K}}
\DeclareMathOperator{\diag}{diag}
\newlength{\oldlineskip}
\numberwithin{equation}{section}
\DeclareMathOperator{\rest}{rest}
\newcommand{\bbar}{\overline{b}}
\newcommand{\tPsi}{\tilde{\Psi}}
\title{Compatible Decomposition of the Casselman Algebra and the Reduced Group $C^*$-algebra of a Real Reductive Group}
\author{Jacob Bradd}
\date{}
\begin{document}
	
\maketitle
\begin{abstract}
For a real reductive group $G$, we investigate the structure of the Casselman algebra $\cS(G)$  and its similarities to the structure of the reduced group $C^*$-algebra $C_r^*(G)$. We demonstrate that the two algebras are assembled from very similar elementary components in a compatible way. In particular, we prove that the two algebras have the same $K$-theory when restricted to a finite set of $K$-types, which is a refinement of the Connes-Kasparov isomorphism.
\end{abstract}

\section{Introduction}
Let $G$ be a real reductive Lie group and let $K$ be a maximal compact subgroup. The elements of the reduced group $C^*$-algebra $C_r^*(G)$ consist of (generalized) functions on $G$ that are roughly in $L^2(G)$ (cf. \cite{cowlingKS}). The Casselman algebra $\cS(G)$ (see Definition \ref{defn: cassalg}) consists of very rapidly decreasing functions on $G$, and is a much smaller (Fr{\'e}chet) algebra. Despite the differences, we shall prove that these two algebras are assembled in a very similar way from very similar elementary components. In particular, these elementary components have identical $K$-theory, and as a result, $C_r^*(G)$ and $\cS(G)$ have the same $K$-theory, too, when the $K$-types are restricted to a finite set.

Given $F \subset \widehat{K}$, there is a ($K$-finite) function $p_F \in C(K)$ which acts on any $K$-representation by projection onto the $K$-types in $F$. Now $\cS(G)$ and $C_r^*(G)$ are $K \times K$-representations, and we write
\[\cS(G,F) = p_F \cS(G) p_F, \quad C_r^*(G,F) = p_F  C_r^*(G) p_F.\]
That is, $\cS(G,F)$ (resp $C_r^*(G,F)$) is the projection of $\cS(G)$ (resp. $C_r^*(G)$) onto the $K \times K$-types in $F \times F$.

The representations of $K$ may be equipped with a notion of length, due to Vogan \cite{voganmainpaper}. The main theorem of this paper is that, when $R > 0$ and when $F$ is the set of $K$-types with length at most $R$, the inclusion $\cS(G,F) \hookrightarrow C_r^*(G,F)$ induces an isomorphism
\begin{equation}\label{eq: CKintro}
	K_*(\cS(G,F)) \xrightarrow{\cong} K_*(C_r^*(G,F)).
\end{equation}
Here, we are using the $K$-theory of Fr{\'e}chet algebras defined by Phillips \cite{phillipsktheory} (the $K$-functor is written as $RK$ there).

The isomorphisms in \eqref{eq: CKintro} refine the Connes-Kasparov isomorphism  (see \cite[(4.20)]{baumconneshigson},  \cite[Section 2.4]{valetteBCsurvey}), which is equivalent to the assertion that the inclusion of $\cS(G)$ into $C_r^*(G)$ induces an isomorphism in $K$-theory. Our result can be used to check the original Connes-Kasparov isomorphism.

It is natural to try to explain the isomorphism \eqref{eq: CKintro} as a manifestation of an Oka principle. In the theory of several complex variables, Grauert \cite{grauert1, grauert2,grauert3} proved that topological vector bundles on Stein spaces can be given a holomorphic structure, unique up to homotopy. This has an interpretation in $K$-theory (due to Novodvorskii \cite{novodvorskii}), namely that the $K$-theory of a commutative Banach algebra is isomorphic to the topological $K$-theory of its Gelfand spectrum. See \cite{2021braddhigson} for an exposition and references.

The isomorphism \eqref{eq: CKintro} also has such an interpretation. Indeed, the representation theory of $C_r^*(G)$ is related to the unitary representations of $G$ (specifically, the \textit{tempered} representations). On the other side, the representation theory of $\cS(G)$ is related to all (admissible) representations (see \cite{2014bernsteinkrotz}). Moreover, there is a Fourier transform on $\cS(G)$ which uses the \textit{nonunitary} principal series, depending on complex parameters, and the Fourier transform of an element of $\cS(G)$ depends holomorphically on these parameters. We can then think of the map $\cS(G) \to C_r^*(G)$ as a restriction map from holomorphic functions on the nonunitary principal series to continuous functions on the tempered dual.

The proof of the isomorphism \eqref{eq: CKintro} is largely based on techniques due to Delorme \cite{delormePW} that are used in his characterization of the Fourier image of $C_c^\infty(G)$ (this description is the ``Paley-Wiener theorem'', first established for general real reductive groups by Arthur \cite{arthur}). These techniques can be adapted to $\cS(G)$ with little change.

Here are the main steps in the argument.

Let $P$ be a cuspidal parabolic subgroup of $G$, let $P = MAN$ be its Langlands decomposition, and let $\sigma$ be a square-integrable representation of $M$. Associated to the pair $(P,\sigma)$ is a Hilbert space $\cH_\sigma$ and a series of $G$-representations $(\pi^P_{\sigma,\lambda}, \cH_\sigma)$ for each $\lambda \in \fa^*$ (where $\fa$ is the complexification of the Lie algebra $\fa_0$ of $A$), called the (nonunitary) principal series. We denote the corresponding $(\fg, K)$-modules by $(\pi^P_{\sigma,\lambda}, I_\sigma)$. 

Given $\phi \in \cS(G,F)$ and $v \in p_F I_\sigma$, we define $\pi_{\sigma,\lambda}^P(\phi)v \in p_FI_\sigma$ by
\[\pi^P_{\sigma,\lambda}(\phi)v = \int_G \phi(g) \pi^P_{\sigma,\lambda}(g) v \, dg.\] 
The map $\lambda \mapsto \pi^P_{\sigma,\lambda}(\phi)$ is a holomorphic function from $\fa^*$ to the finite-dimensional space $\End(p_F I_\sigma)$. Moreover, if we set (for a finite-dimensional normed vector space $V$)
\begin{align*}
	\PW(\fa^*, V)= \{f: \fa^* \to V:& \quad\text{$f$ is holomorphic and } \\
	&\sup_{|\Re \lambda| \leq k} (1+|\lambda|)^N \|f(\lambda)\| < \infty \, \text{ for all } N,k \in \bN\},
\end{align*} 
then we obtain from $\pi^P_{\sigma,\lambda}$ a continuous map
\[\pi_\sigma: \cS(G,F) \to \PW(\fa^*, \End(p_F I_\sigma)).\]

Let $\minK{\sigma}$ denote the set of minimal $K$-types of $I_\sigma$ (that is, the $K$-types of $I_\sigma$ of minimal length). A deep theorem of Vogan \cite{voganmainpaper} states that the set $\minK{\sigma}$ determines the pair $(P, \sigma)$ up to $G$-conjugacy, and that the sets $\minK{\sigma}$ partition $\Khat$. Accordingly, we can totally order the $G$-conjugacy classes $[P,\sigma]$ using the sets $\minK{\sigma}$ and the common lengths of their elements. Choose representatives $(P_n, \sigma_n)$ so that
\[[P_1, \sigma_1] < [P_2, \sigma_2] < \cdots \]
We then define ideals
\[0 = J_0 \subset J_1 \subset \cdots \subset J_N = \cS(G,F)\]
by the property that $\pi_{\sigma_m}(J_n) = 0$ for $m > n$. Thus, $J_1$ consists of functions $\phi \in \cS(G,F)$ which vanish on every principal series other than the spherical principal series $I_{\sigma_1}$ (whose minimal $K$-type is the trivial $K$-type), while  $J_2$ consists of functions that vanish on all principal series other than $I_{\sigma_1}$ and $I_{\sigma_2}$, and so on. By definition, $\pi_{\sigma_n}$ is injective on the subquotient $\cJ_n/\cJ_{n-1}$.   

We define ``Morita equivalence'' for Fr{\'e}chet algebras $\cA$ in the narrow sense that if $p$ is a projection in (the ``multiplier algebra'' of) $\cA$ such that $\overline{\cA p \cA} = \cA$, then $\cA \sim p\cA p$. We will prove in Section \ref{sec: ktheory} that if $\cA\sim p\cA p$, then the inclusion $p\cA p \hookrightarrow \cA$ induces an isomorphism in $K$-theory. Making use of Delorme's techniques (adapted to $\cS(G)$), we obtain a ``Morita equivalence''
\[\cJ_n/\cJ_{n-1}\sim \PW(\fa^*, \End(p_{\minK{\sigma_n}} I_{\sigma_n}))^{W_{\sigma_n}},\]
where $W_{\sigma_n}$ is a certain finite group acting on $\PW(\fa^*, \End(p_{\minK{\sigma_n}} I_{\sigma_n}))$ in a fairly simple way (in particular, the action is mostly induced by an action on $\fa^*$).

We define $J_n \subset C_r^*(G,F)$ similarly, and we have (using results of \cite{clarecrisphigson})
\[J_n/J_{n-1} \sim C_0(i\fa^*_0, \End(p_{\minK{\sigma_n}} I_{\sigma_n}))^{W_{\sigma_n}}.\]
Moreover, the inclusion
\[\PW(\fa^*, \End(p_{A(\sigma)}I_\sigma)))^{W_\sigma} \hookrightarrow  C_0(i\fa^*_0, \End(p_{\minK{\sigma}} I_{\sigma}))^{W_\sigma}\]
induces an isomorphism in $K$-theory by a simple homotopy argument.

It follows that the inclusion
\[\cJ_n /\cJ_{n-1} \hookrightarrow J_{n}/J_{n-1}\]
induces an isomorphism in $K$-theory. The isomorphism \eqref{eq: CKintro} is established by a series of $6$-term exact sequence and five-lemma arguments.

To summarize, we apply techniques of Delorme \cite{delormePW} and the results of Clare-Crisp-Higson \cite{clarecrisphigson} to decompose $\cS(G,F)$ and $C_r^*(G,F)$ into elementary components, which are Morita equivalent to fairly simple function spaces. These have isomorphic $K$-theory by a simple homotopy argument, which can be regarded as a simple application of the Oka principle.

Our filtrations are similar to those appearing in the recent work of Afgoustidis \cite{afgoustidisCK}, who has provided a proof of the Connes-Kasparov isomorphism using the Cartan motion group and the Mackey analogy, generalizing Higson \cite{higsonmackeyanalogy} for complex groups. He defines ideals in $C_r^*(G)$, corresponding to the sets $A(\sigma)$, that are the same as ours.
However, we compare $C_r^*(G)$ not to the $C^*$-algebra of the motion group, but to $\cS(G)$, and on $\cS(G)$ our ideals are inspired by the ideals defined by Delorme in \cite[Proposition 2]{delormePW} (in fact, it is possible to use Delorme's ideals directly, employing an ``induction on $K$-type length'' argument, but the refinement using minimal $K$-types is more convenient for this purpose).

The structure of the paper is as follows. We first state the main theorem in Section 2, and then develop some basic Fr{\'e}chet algebra $K$-theory in Section 3. We provide some background on representation theory in Section 4, and in Section 5 we define the ideals, as above, and state four theorems that amount to the ``Morita equivalence'' outlined above. In Section 6, we reduce the main theorem to these four theorems. In Section 7, we reduce those four theorems to a ``Factoring Theorem'' that is analogous to \cite[Proposition 1]{delormePW}. Finally, in Section 8, we prove this Factoring Theorem (and a ``Divisibility Theorem'') by adapting Delorme's proof of \cite[Propositions 1,2]{delormePW} to $\cS(G)$.

\section{Preliminaries}

We restrict our class of real reductive groups to those considered by Knapp \cite{knappcommutativity} (see also \cite{delormeclozel}). These are closed subgroups $G \subset \GL(n,\bR)$ with finitely many connected components such that $\fg_0$ is reductive and, if $G_\bC$ denotes the analytic subgroup of $\GL(n,\bC)$ corresponding to $\fg$, and if $Z_\bC(G)$ denotes the centralizer of $G$ in $\GL(n,\bC)$, then
\[G \subset G_\bC \cdot Z_\bC(G).\]
These groups have the advantage of satisfying hypotheses of Harish-Chandra, Knapp-Stein and Vogan (see Clozel and Delorme \cite[Section 1.2]{delormeclozel} for more precise statements). Any group of real points of a connected reductive algebraic group defined over $\bR$ is in this class of groups.

Let $K$ denote a choice of maximal compact subgroup. We write $\Khat$ for the (isomorphism classes of) irreducible unitary representations of $K$. We will write $\gamma \in \Khat$ to mean a (fixed) representative $(\gamma, V_\gamma)$ of an element of $\Khat$.

We denote, for example, $\fg$ to be the \textit{complexified} Lie algebra of $G$, and $\fg_0 = \Lie(G)$ to denote the corresponding real Lie algebra.  We fix a Cartan decomposition $\fg_0 = \fk_0 + \fp_0$ with corresponding Cartan involution $\theta$, and choose a maximal abelian Lie subalgebra $\fa_{0,\min}$ of $\fp_0$ with corresponding $A_{\min} = \exp(\fa_{0,\min})$.

We fix an invariant bilinear form $B$ on $\fg_0$ (for example, the Killing form $B(X,Y) = \tr(\ad_X \ad_Y)$ in the semisimple case), which is negative definite on $\fk_0$ and positive definite on $\fp_0$, hence $\fa_0$. We define the inner product $\langle \cdot, \cdot \rangle$ on $\fg$ by
\begin{equation}\label{eq: innerproduct}
	\langle X,Y\rangle = -B(X, \theta Y).
\end{equation}

We also have the decomposition
\[G = KA_{\min} K.\]
Writing $g = k_1 e^X k_2$ for $X \in \fa_{0,\min}$, we set $\|g\| = e^{\|X\|}$. This definition depends only on $g$, and defines a norm on $G$ in the sense of \cite[2.A.2]{wallachbook1}.

\begin{defn}\label{defn: cassalg}
	The Casselman algebra $\cS(G)$ is the space
	\begin{align*}
		\cS(G) = \{\phi \in C^\infty(G) \mid \; \|g\|^N (L_u R_v \phi)(g) \in L^1(G) \; \forall u,v \in \cU(\fg), N \in \bN\}.
	\end{align*}
	Here, $L_u$ (resp. $R_v$) denotes the left-regular (resp. right-regular) action of the enveloping algebra $\cU(\fg)$ on $C^\infty(G)$.
	
	The Casselman algebra is a Fr{\'e}chet algebra with seminorms defined as follows. Fix an ordered basis $X_1, \ldots, X_{\dim G}$ of $G$. We set
	\begin{equation}\label{eq: SG seminorms}
		\|\phi\|_{\cS(G),N,k} = \sum_{|I|,|J|\leq k}\int_G (1+\|g\|)^N |L_{X^I}R_{X^J} \phi|\,dg,
	\end{equation}
	where $I$ and $J$ are multi-indices.
\end{defn}
\begin{defn}
	The reduced group $C^*$-algebra $C_r^*(G)$ is the completion of $L^1(G)$ with
	respect to the norm
	\[\|f\|_{C_r^*(G)} = \sup_{\|h\|_{L^2(G)}=1} \|f * h\|.\]
	That is, $C_r^*(G)$ is the closure of $L^1(G)$ embedded into $\cB(L^2(G))$ under the
	left-regular representation.
\end{defn}

Note that $\cS(G)$ is a subset of $L^1(G)$ and therefore a subset of $C_r^*(G)$. 

\begin{defn}\label{defn: projectionontoF}
	Given $\gamma \in \Khat$, define $p_\gamma \in C^\infty(K)$ by
	\[p_\gamma(k) = \overline{\Tr(\gamma(k^{-1}))}.\]
	Given an $K$-module $(\pi, E)$, then $\pi(p_\gamma)$ is precisely the projection onto the $\gamma$-isotypical component of $E$. Given a finite subset $F \subset \Khat$, we write 
	\[p_F = \sum_{\gamma \in F} p_\gamma.\]
	Treating $\cS(G)$ and $C_r^*(G)$ as $K \times K$-modules, we define
	\[\cS(G,F) = p_F \cS(G) p_F, \quad C_r^*(G,F) = p_F C_r^*(G) p_F.\]
\end{defn}
We will make use of the projections $p_F$ extensively. In particular, given a $K$-representation $E$, we will write $p_F E$ to denote the projection of $E$ onto the $K$-types in $F$ (instead of more common notation such as $E(F)$, which appears for example in \cite{wallachbook1}).

\begin{defn}
	We choose a Cartan subalgebra $\ft_K \subset \fk$, and fix a positive root system $\Delta^+(\fk, \ft_{K})$. We write $\rho_c$ for the half-sum of these positive roots. Given a $K$-type $\gamma \in \Khat$ with a highest weight $\overline{\gamma}$, we define the ``length'' of $\gamma$ by
	\[\|\gamma\| = \langle \overline{\gamma} + 2\rho_c, \overline{\gamma} + 2\rho_c\rangle,\]
	which is independent of the highest weight chosen (in the disconnected case). This is as in \cite[Definition 5.4.18]{voganbook} (see also \cite[Section X.2]{knappvogan}).
\end{defn}

The aim of this paper is to prove the following.
\begin{thm}\label{thm: mainthm}
	For each $R \geq 0$, setting $F = \{\gamma \in \Khat: \|\gamma\| \leq R\}$, the inclusion map $\cS(G,F) \to C_r^*(G,F)$ induces an isomorphism in $K$-theory.
\end{thm}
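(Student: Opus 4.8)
The plan is to mirror the strategy already sketched in the introduction, organizing the proof around a filtration of both algebras by ideals indexed by the $G$-conjugacy classes $[P_n,\sigma_n]$, ordered via minimal $K$-types and their lengths. First I would fix $R \geq 0$ and $F = \{\gamma \in \Khat : \|\gamma\| \leq R\}$, and use Vogan's theorem (that the minimal $K$-type sets $\minK{\sigma}$ partition $\Khat$ and determine $(P,\sigma)$ up to conjugacy) to produce only finitely many relevant classes $[P_1,\sigma_1] < \cdots < [P_N,\sigma_N]$ — precisely those whose minimal $K$-types lie in $F$. For each I would define the Fourier transform $\pi_{\sigma_n}$ on $\cS(G,F)$ (and on $C_r^*(G,F)$, via the analogous description of the tempered dual) and set $\cJ_n = \bigcap_{m>n} \ker \pi_{\sigma_m}$, and likewise $J_n \subset C_r^*(G,F)$, obtaining two compatible filtrations with $\cJ_N = \cS(G,F)$, $J_N = C_r^*(G,F)$, and an inclusion $\cJ_n \hookrightarrow J_n$ for every $n$.

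Next I would establish the subquotient identifications. On the Casselman side, using Delorme's techniques adapted to $\cS(G)$ (the "Factoring Theorem" and "Divisibility Theorem" referenced for Section 8), one shows $\pi_{\sigma_n}$ is injective on $\cJ_n/\cJ_{n-1}$ with image giving a "Morita equivalence"
\[
\cJ_n/\cJ_{n-1} \sim \PW(\fa_n^*, \End(p_{\minK{\sigma_n}} I_{\sigma_n}))^{W_{\sigma_n}},
\]
and on the $C^*$-side, using Clare–Crisp–Higson,
\[
J_n/J_{n-1} \sim C_0(i\fa_{0,n}^*, \End(p_{\minK{\sigma_n}} I_{\sigma_n}))^{W_{\sigma_n}}.
\]
By the Fréchet-algebra $K$-theory developed in Section 3, each "Morita equivalence" $\cA \sim p\cA p$ induces a $K$-theory isomorphism, so $K_*(\cJ_n/\cJ_{n-1})$ and $K_*(J_n/J_{n-1})$ are computed by these concrete function algebras. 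Then a homotopy argument — contracting the real part of $\lambda$ to zero, which keeps everything inside the Paley–Wiener class and is compatible with the $W_{\sigma_n}$-action since that action is (mostly) induced from $\fa^*$ — shows the inclusion $\PW(\fa_n^*, -)^{W_{\sigma_n}} \hookrightarrow C_0(i\fa_{0,n}^*, -)^{W_{\sigma_n}}$ is a $K$-theory isomorphism. Hence $\cJ_n/\cJ_{n-1} \hookrightarrow J_n/J_{n-1}$ is a $K$-theory isomorphism for each $n$.

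Finally I would run the induction on $n$: assuming $\cJ_{n-1} \hookrightarrow J_{n-1}$ is a $K$-theory isomorphism, compare the six-term exact sequences in $K$-theory associated to the short exact sequences $0 \to \cJ_{n-1} \to \cJ_n \to \cJ_n/\cJ_{n-1} \to 0$ and $0 \to J_{n-1} \to J_n \to J_n/J_{n-1} \to 0$, which fit into a commuting ladder via the inclusions; since the maps on ideal and quotient terms are isomorphisms, the five lemma gives that $\cJ_n \hookrightarrow J_n$ is a $K$-theory isomorphism. Taking $n = N$ yields the theorem. Two points need care: one must check that the six-term exact sequence and five lemma are available in Phillips' $RK$-theory for Fréchet algebras (so one works with suitably "good" Fréchet algebras and uses that the relevant sequences are admissible), and one must verify the ideals $\cJ_n$, $J_n$ actually form such short exact sequences with the expected quotients — this is where the bulk of the work lies.

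I expect the main obstacle to be the Casselman-side analysis of the subquotients $\cJ_n/\cJ_{n-1}$, i.e.\ proving that the image of $\pi_{\sigma_n}$ is exactly the $W_{\sigma_n}$-invariant Paley–Wiener space and that the "Morita equivalence" genuinely holds. This is the analogue of Delorme's Propositions 1 and 2, and adapting his intertwining-operator and holomorphic-division arguments to the rapidly decreasing functions of $\cS(G)$ — controlling the seminorm estimates \eqref{eq: SG seminorms} throughout — is the technically heaviest step; the $K$-theory bookkeeping in the last paragraph, by contrast, is essentially formal once the subquotients are pinned down.
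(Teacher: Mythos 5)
Your proposal follows essentially the same strategy as the paper: filter both algebras by ideals indexed by the totally ordered $G$-conjugacy classes of cuspidal pairs, identify each subquotient via a "Morita equivalence" with a $W_{\sigma_n}$-invariant Paley–Wiener (resp. $C_0$) matrix algebra, show those two function algebras have the same $K$-theory by a scaling homotopy and direct limit, and then induct using six-term exact sequences and the five lemma (the paper packages this last step via mapping cones, but that is a cosmetic difference). Your sketch of the homotopy argument ("contracting $\Re\lambda$ to zero") is slightly imprecise — the paper instead scales the whole variable $\lambda \mapsto \lambda/(1+t)$ and takes a direct limit of shrinking tubes, since contracting only the real part would destroy holomorphicity — but the underlying idea and all other steps match the paper's proof.
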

Here we note that $K$-theory for $\cS(G,F)$ is defined in the sense of Phillips \cite{phillipsktheory}. This notion of $K$-theory is not equivalent to the usual notion (i.e. using stabilization via $\varinjlim M_n(\bC)$ or $\cK(\cH)$) because $\cS(G,F)$ is not a ``good'' Fr{\'e}chet algebra, in the sense that the subset of invertible elements in the unitization $\cS(G,F)^+$ is not an open subset (see \cite[A.1.2]{1990bost} for the notion of good algebra).

%%%%%%%%%%%%%%%%%%%%%%%%%%%%%%
Finally, we note that the statement of Theorem \ref{thm: mainthm} does not directly imply that the map
\[K_*(\cS(G)) \to K_*(C_r^*(G))\]
is an isomorphism. However, the isomorphism
\[\varinjlim_F K_*(\cS(G,F)) \xrightarrow{\cong} \varinjlim_F K_*(C_r^*(G)) \cong K_*(C_r^*(G))\]
supplied by Theorem \ref{thm: mainthm} does factor through the above map. Therefore, the map $K_*(\cS(G)) \to K_*(C_r^*(G))$ is surjective, which is considered the ``main'' half of the Connes-Kasparov isomorphism (split-injectivity of the Dirac induction map is due to Kasparov \cite{kasparov88}).
%%%%%%%%%%%%%%%%%%%%%%%%%%%%%%%%

\section{Fr{\'e}chet algebra $K$-theory and Morita equivalence}\label{sec: ktheory}

\subsection{Mapping cones}
We recall the notion of mapping cones for Fr{\'e}chet algebras and the corresponding $6$-term exact sequence in $K$-theory. Here we use the $K$-theory and results of Phillips \cite{phillipsktheory}, and we will write $K_i$ instead of $RK_i$.
\begin{defn}
	The mapping cone of a continuous Fr{\'e}chet algebra homomorphism $f: A \to B$ is the Fr{\'e}chet algebra
	\[\MC(f) = \{(\gamma, a) \in C([0,1],B) \oplus A:  \gamma(0) = f(a),\, \gamma(1) = 0 \}.\]
\end{defn}
\begin{lem}\label{lem: mappingconething}
	The map $f: A \to B$ induces an isomorphism in $K$-theory if and only if $\MC(f)$ has zero $K$-theory.
\end{lem}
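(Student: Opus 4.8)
The plan is to realize the mapping cone $\MC(f)$ as the kernel of a surjection onto $A$, and then feed the resulting short exact sequence into the Phillips six-term sequence. More precisely, let $CB = \{\gamma \in C([0,1],B) : \gamma(1) = 0\}$ be the (contractible) cone on $B$; then there is a short exact sequence of Fréchet algebras
\[
0 \longrightarrow SB \longrightarrow \MC(f) \xrightarrow{\ \mathrm{ev}\ } A \longrightarrow 0,
\]
where $SB = \{\gamma \in C([0,1],B): \gamma(0)=\gamma(1)=0\}$ is the suspension and the map $\mathrm{ev}(\gamma,a) = a$ is the (split, hence open) surjection. One checks that this sequence is admissible in the sense required by Phillips (the surjection has a continuous linear splitting $a \mapsto (t \mapsto (1-t)f(a),\, a)$), so it induces a six-term exact sequence in $K$-theory.

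Next I would identify the connecting maps. The inclusion $SB \hookrightarrow \MC(f)$ and the evaluation $\mathrm{ev}: \MC(f) \to A$ fit into the standard picture for mapping cones: the composite $SB \hookrightarrow \MC(f)$ followed by the natural map realizing $f$ shows that, after the usual suspension identification $K_i(SB) \cong K_{i-1}(B)$, the boundary map $K_i(A) \to K_{i-1}(SB) \cong K_i(B)$ in the six-term sequence is precisely $f_*$. (This is the Fréchet-algebra version of the classical fact; since $\cU$ the functor $K_*$ of Phillips is homotopy invariant, half-exact, and has Bott periodicity and natural boundary maps, the same diagram chase as in the $C^*$-algebra case applies verbatim.) Granting this, the six-term sequence reads
\[
\cdots \to K_{i+1}(A) \xrightarrow{f_*} K_i(B) \to K_i(\MC(f)) \to K_i(A) \xrightarrow{f_*} K_{i-1}(B) \to \cdots
\]

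Now the equivalence is immediate by exactness. If $f_*$ is an isomorphism in every degree, then in the displayed sequence the map into $K_i(\MC(f))$ has image equal to the kernel of an injection, hence is zero, and the map out of $K_i(\MC(f))$ has kernel equal to the image of a surjection, hence is all of $K_i(\MC(f))$; so $K_i(\MC(f)) = 0$ for $i = 0,1$, and by Bott periodicity for all $i$. Conversely, if $K_*(\MC(f)) = 0$, then the sequence degenerates to $0 \to K_i(B) \to 0$ replaced by the four-term piece $K_{i+1}(A) \xrightarrow{f_*} K_i(B) \to 0 \to K_i(A) \xrightarrow{f_*} K_{i-1}(B)$, forcing $f_*$ to be surjective and injective in each degree.

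The main obstacle is not the diagram chase but setting up the foundational input: verifying that Phillips' $RK$-theory genuinely has the properties used here — a natural six-term exact sequence for admissible (linearly split) extensions of Fréchet algebras, homotopy invariance, and Bott periodicity — and that the extension above is admissible in his sense. All of this is available in \cite{phillipsktheory}, but one must check that the continuous linear splitting suffices (rather than, say, a multiplicative one) and that the identification of the boundary map with $f_*$ carries over; once those points are granted, the proof is the standard mapping-cone argument.
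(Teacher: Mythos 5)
Your proof is essentially identical to the paper's: both use the short exact sequence $0 \to SB \to \MC(f) \to A \to 0$, invoke Phillips' six-term exact sequence together with the suspension isomorphism (the paper cites \cite[Theorems 5.5, 6.1]{phillipsktheory}), identify the boundary map $K_i(A) \to K_i(B)$ with $f_*$, and read off the equivalence from exactness. You spell out the admissibility check (continuous linear splitting) and the boundary-map identification more explicitly than the paper, but the argument is the same.
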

\begin{proof}
	We have a short exact sequence
	\[0 \to S(B) \to \MC(f) \to A \to 0,\]
	where $S(B)$ is the suspension of $B$,
	\[S(B) = \{\phi: [0,1] \to B: \phi(0) = \phi(1) = 0\}.\]
	By Theorems 6.1 and 5.5 of \cite{phillipsktheory}, we obtain to a $6$-term exact sequence
	\[\xymatrix{
		K_0(\MC(f)) \ar[r]& K_0(A) \ar[r] &K_0(B) \ar[d] \\
		K_1(B) \ar[u] & \ar[l] K_1(A) & \ar[l] K_1(\MC(f)).
	}\]
	The lemma follows immediately from this exact sequence.
\end{proof}

\subsection{Morita equivalence}
We recall a theorem regarding Morita equivalence for Banach algebras, due to Lafforgue and recorded by Paravicini \cite{paravicini}.

\begin{defn}
	A Banach algebra (or Fr{\'e}chet algebra) $A$  is said to be \textit{non-degenerate} if the multiplication map $A \times A \to A$ has dense range.
\end{defn}
We define the \textit{multiplier algebra} $M(A)$ of a Banach algebra $A$ to be the algebra of double centralizers of $A$. That is, $M(A)$ consists of pairs $(L,R)$ of homomorphisms $A \to A$ which satisfy $aL(b) = R(a)b$ for $a,b \in A$. Elements of $M(A)$ act on the left of $A$ via $L$ and on the right via $R$. That is, if $T = (L,R) \in M(A)$, then $Ta = L(a)$ and $aT = R(a)$.
\begin{defn}
	Let $A$ be a Banach algebra, and let $p$ be an idempotent in the multiplier algebra $M(A)$.	Then $p$ is said to be a \textit{full idempoten}t if $ApA$ is dense in $A$.
\end{defn}

\begin{thm}[cf. {\cite[Proposition 4.5 and Theorem 4.25]{paravicini}}]\label{thm: banachmorita}
	Let $A$ be a non-degenerate Banach algebra and let $p \in M(A)$ be a full idempotent. The inclusion map $pAp \hookrightarrow A$ induces an isomorphism in $K$-theory.
\end{thm}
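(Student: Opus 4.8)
The plan is to recognize this as a form of Banach-algebra Morita invariance of $K$-theory and to deduce it from the two results of Paravicini cited in the statement. The relevant bimodules are the $(pAp,A)$-bimodule $pA$ and the $(A,pAp)$-bimodule $Ap$ (with the obvious multiplications; note that $pAp$ is a \emph{unital} Banach algebra with unit $p$, and that $Ap$ is a closed, complemented right submodule of $A$, being the range of the bounded idempotent $a\mapsto ap$). Multiplication provides pairing maps
\[ Ap \,\widehat{\otimes}_{pAp}\, pA \longrightarrow A, \qquad pA \,\widehat{\otimes}_{A}\, Ap \longrightarrow pAp, \]
with dense ranges $\overline{ApA}$ and $\overline{pApAp}$ respectively. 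By definition of a full idempotent, $\overline{ApA}=A$; consequently $\overline{pApAp}=p\,\overline{ApA}\,p=pAp$ as well, both pairings have dense range, and $pAp$ inherits non-degeneracy from $A$. This is exactly the assertion that $(pA,Ap)$ is a Morita context implementing a Morita equivalence between $pAp$ and $A$ as in \cite[Proposition 4.5]{paravicini}, and \cite[Theorem 4.25]{paravicini} then delivers an isomorphism $K_*(pAp)\cong K_*(A)$.

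It then remains to identify this isomorphism with the one induced by the inclusion $\iota\colon pAp\hookrightarrow A$. The Morita isomorphism is implemented on $K$-theory by (projectively completed) tensoring with the equivalence bimodule, $[M]\mapsto [Ap\,\widehat{\otimes}_{pAp}\,M]$, whereas $\iota_*$ is implemented by $[M]\mapsto[A\,\widehat{\otimes}_{pAp}\,M]$, with $A$ regarded as an $(A,pAp)$-bimodule via $\iota$. The inclusion $Ap\hookrightarrow A$ is a morphism of $(A,pAp)$-bimodules and so induces a natural transformation between these two functors; on the free module $M=pAp$ it is the map $Ap=Ap\,\widehat{\otimes}_{pAp}\,pAp\to A\,\widehat{\otimes}_{pAp}\,pAp$, which is an isomorphism since $A\,\widehat{\otimes}_{pAp}\,pAp\cong\overline{ApAp}=\overline{Ap}=Ap$, again using fullness. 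Hence the two functors agree on finitely generated projective $pAp$-modules, so $\iota_*$ is the Morita isomorphism on $K_0$, and the parallel argument in the invertibles picture handles $K_1$. (Alternatively one can route this comparison through the linking algebra $\bigl(\begin{smallmatrix} pAp & pA \\ Ap & A\end{smallmatrix}\bigr)=\diag(p,1)M_2(A)\diag(p,1)$, in which $pAp$ and $A$ appear as full corners, the compatibility then coming from a routine $2\times 2$ conjugation argument.)

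I expect the real work to lie entirely inside the imported Banach Morita theory rather than in the bookkeeping above. In contrast to the purely ring-theoretic Morita theorem, one must use completed projective tensor products of Banach bimodules, the familiar identities (such as $pA\,\widehat{\otimes}_A\,Ap\cong pAp$) hold only after passing to closures, and the construction of the inverse functor and of the $K$-theory isomorphism uses the non-degeneracy hypotheses essentially — all of which is precisely what \cite{paravicini} supplies, so we simply invoke it. Finally, ``$K$-theory'' is unambiguous here: for Banach algebras the various standard topological $K$-theories, Phillips' $RK$ included, coincide, so the Fr\'echet-algebra subtleties that force the use of $RK$ elsewhere in the paper do not intervene.
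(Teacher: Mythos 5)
Your proposal is correct in outline but takes a genuinely different route from the paper. Where you try to \emph{identify} the isomorphism supplied by Paravicini's Morita theory with $\iota_*$ directly—by comparing the tensoring functor $M\mapsto Ap\,\widehat{\otimes}_{pAp}M$ with the functor $M\mapsto A\,\widehat{\otimes}_{pAp}M$ on finitely generated projective $pAp$-modules—the paper sidesteps this identification entirely. It applies Paravicini's Morita theorem not to $A$ itself but to the \emph{mapping cone} $\MC$ of the inclusion $\iota\colon pAp\hookrightarrow A$: the multiplier $p$ restricts to a full idempotent of $\MC$, and $p\MC p\cong \MC(\mathrm{id}_{pAp})$ is the cone of $pAp$, hence contractible. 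Therefore $K_*(\MC)\cong K_*(p\MC p)=0$, and by the six-term sequence (Lemma~\ref{lem: mappingconething}) the inclusion induces an isomorphism. This is cleaner for two reasons: it treats $K_0$ and $K_1$ uniformly, so there is no ``parallel argument in the invertibles picture'' to supply; and it never opens the black box of how Paravicini's isomorphism is implemented (whether by bimodule tensoring, linking algebras, or otherwise), only that it exists.

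The weakest point of your write-up is exactly the $K_1$ step, which you assert but do not perform, and—more subtly—the step where you assume that in Paravicini's setting both the Morita isomorphism and $\iota_*$ on $K_0$ are literally computed by the completed-tensor functors you wrote down, for a not-necessarily-unital Banach algebra $A$. Both are plausible and would likely be made rigorous with a careful reading of \cite{paravicini}, but they are real work, whereas the mapping-cone reduction is a two-line formal argument. If you keep your approach, you should make precise which description of $K_0$ and $K_1$ you are using for non-unital $A$ and verify that Paravicini's bimodule construction agrees with it; alternatively, adopt the mapping-cone trick, which also has the virtue of being exactly the pattern reused in Theorem~\ref{thm: frechetmorita} and throughout Section~6.
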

\begin{rmk}
	In the generality of \cite[Proposition 4.5 and Theorem 4.25]{paravicini}, it is not explicitly stated that the isomorphism between $K_*(pAp)$ and $K_*(A)$ is induced by inclusion. However, we can apply Paravicini's theorem to the mapping cone $\MC$ of the inclusion map to see that $K_*(\MC) \cong K_*(p\MC p)$. As $p\MC p$ is the cone of $pAp$, which is a contractible algebra, it follows that $\MC$ vanishes in $K$-theory, which implies Theorem \ref{thm: banachmorita}.
\end{rmk}
Now let $A$ be a non-degenerate Fr{\'e}chet algebra. We follow the convention in \cite{phillipsktheory}. That is, we assume that $A$ is an inverse limit of Banach algebras $A_n$, such that the associated homomorphisms $\pi_{m,n}: A_m \to A_n$ and $\pi_n: A \to A_n$ have dense range. Note that $A_n$ is non-degenerate for each $n$, because $A_n A_n$ contains $\pi_n(AA)$, which is dense in $\pi_n(A)$ and hence $A_n$.  We will write $A = \varprojlim A_n$ to present a Fr{\'e}chet algebra $A$ as an inverse limit of Banach algebras $A_n$ under this convention.

\begin{defn}
	Let $A = \varprojlim A_n$ be a Fr{\'e}chet algebra. A full idempotent $p$ of $A$ will refer to a pair of idempotent continuous linear maps $p_L, p_R: A \to A$ such that
	\begin{enumerate}
		\item $ap_L(b) = p_R(a)b$ for all $a, b \in A$,
		\item For each $n$, there exists $p_n \in M(A_n)$ such that 
		\begin{enumerate}[label=(\roman*)]
			\item $p_n\pi_n(a) = \pi_n(p_L(a))$
			\text{ and }$\pi_n(a)p_n = \pi_n(p_R(a))$.
			\item $\pi_{m,n}(p_m a) = p_n \pi_{m,n}(a)$\text{ and }$\pi_{m,n}(a p_m) = \pi_{m,n}(a)p_n$.
		\end{enumerate}
		
		\item $Ap_L(A)$ is dense in $A$.
	\end{enumerate} 
	As usual, $p$ acts on the left of $A$ by $p_L$, and on the right by $p_R$.
\end{defn}

\begin{thm}\label{thm: frechetmorita}
	Let $A = \varprojlim A_n$ be a Fr{\'e}chet algebra, where $\{A_n\}$ is an inverse system of Banach algebras such that the homomorphisms $\pi_n: A \to A_n$ have dense range. If $p$ is a full idempotent of $A$, then the inclusion map $p A p \to A$ induces an isomorphism in $K$-theory.
\end{thm}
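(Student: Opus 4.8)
The plan is to prove the statement one level at a time using Theorem~\ref{thm: banachmorita}, and then reassemble the levels via the behaviour of Phillips' $K$-theory under inverse limits. So I would first check that for each $n$ the idempotent $p_n\in M(A_n)$ attached to the full idempotent $p$ is itself a full idempotent of the non-degenerate Banach algebra $A_n$. Idempotency of $p_n$ follows from $p_n\pi_n(a)=\pi_n(p_L(a))$ (and the analogous right-hand identity) together with $p_L^2=p_L$ and the density of $\pi_n(A)$ in $A_n$. Fullness holds because $\pi_n(a\,p_L(b))=\pi_n(a)\,p_n\,\pi_n(b)\in A_np_nA_n$ while $A\,p_L(A)$ is dense in $A$ by the third defining property of a full idempotent; hence $A_np_nA_n$ is dense in $A_n$. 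Theorem~\ref{thm: banachmorita} then gives that the inclusion $\iota_n\colon p_nA_np_n\hookrightarrow A_n$ induces an isomorphism in $K$-theory for every $n$.

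Next I would realize $\iota\colon pAp\hookrightarrow A$ as the inverse limit of the $\iota_n$ in a way compatible with Phillips' conventions. Taking $pAp$ to be the closed subalgebra $\{a\in A : p_L(a)=a=p_R(a)\}$ (equivalently $p_L(p_R(A))$), one checks using condition 2(i) and the separatedness of $A=\varprojlim A_n$ that $pAp=\varprojlim\, p_nA_np_n$, and using condition 2(ii) together with the continuity of $x\mapsto p_nxp_n$ that the structure maps $p_mA_mp_m\to p_nA_np_n$ have dense range. So this is a legitimate presentation of $pAp$ as an inverse limit of Banach algebras, and $\iota=\varprojlim\iota_n$.

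Then I would pass to the limit through mapping cones. By Lemma~\ref{lem: mappingconething} it suffices to show $\MC(\iota)$ has vanishing $K$-theory. Now $\MC(\iota)=\varprojlim\MC(\iota_n)$: this uses $C([0,1],A)=\varprojlim C([0,1],A_n)$, the identification of $pAp$ above, and the fact that the defining equations $\gamma(0)=\iota(a)$, $\gamma(1)=0$ hold in $A$ precisely when they hold in every $A_n$; one also verifies that the structure maps of $\{\MC(\iota_n)\}$ have dense range, by approximating a continuous path in $A_n$ by one in $A_m$ (a partition-of-unity argument) and correcting the two endpoints. Each $\MC(\iota_n)$ has vanishing $K$-theory by Lemma~\ref{lem: mappingconething} applied to the continuous Banach—hence Fréchet—algebra map $\iota_n$, which is a $K$-theory isomorphism by the first paragraph. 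Since every term of the inverse system $\{\MC(\iota_n)\}$ has trivial $K$-theory, both the $\varprojlim$ and the $\varprojlim^1$ terms of the Milnor-type inverse-limit exact sequence for Phillips' $K$-theory vanish, so $K_*(\MC(\iota))=0$, and Lemma~\ref{lem: mappingconething} gives that $\iota$ induces an isomorphism in $K$-theory.

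The step I expect to be the main obstacle is exactly this passage to the inverse limit: confirming that $pAp$ and especially $\MC(\iota)$ really are presented as $\varprojlim$ of Banach algebras with dense-range structure maps (the endpoint correction in the mapping-cone system is the fiddliest point), and citing the precise form of the continuity / $\varprojlim^1$ property of Phillips' $RK$-functor that licenses the final deduction. Everything else is routine bookkeeping layered on top of Theorem~\ref{thm: banachmorita}.
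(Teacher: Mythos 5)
Your proposal follows exactly the same strategy as the paper's proof: pass to each Banach level $A_n$ and verify $p_n$ is a full idempotent, apply Theorem~\ref{thm: banachmorita} to get $K_*(p_nA_np_n)\cong K_*(A_n)$, realize $\MC(\iota)=\varprojlim\MC(\iota_n)$, and then invoke Phillips' $\varprojlim^1$--exact sequence to conclude $K_*(\MC(\iota))=0$. The only difference is that you flesh out some details the paper takes for granted (idempotency of $p_n$, dense range of the mapping-cone structure maps), but these are bookkeeping and the core argument is identical.
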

\begin{proof}
	Note that $A_n p_n A_n$ is dense in $A_n$, because the former contains $\pi_n(A p A)$, which is dense in $\pi_n(A)$ and hence dense in $A_n$. By Theorem \ref{thm: banachmorita}, this means that $K_*(p_n A_n p_n) \cong K_*(A_n)$.
	
	Let $\MC$ be the mapping cone of $pAp \to A$, and let $\MC_n$ be the mapping cone of $p_nA_np_n \to A_n$. Then $\pi_{m,n}$ induces a homomorphism $\pi_{m,n}: \MC_{m} \to \MC_n$ and similarly $\pi_n$ induces a homomorphism $\pi_n: \MC \to \MC_n$. Moreover, $\MC = \varprojlim \MC_n$. According to \cite[Theorem 6.5]{phillipsktheory}, we have the short exact sequence
	\[0 \to \varprojlim{}^1 K_{1-*}(\MC_n) \to K_*(\MC) \to \varprojlim K_*(\MC_n) \to 0.\]
	As $K_*(\MC_n) = 0$ by Theorem \ref{thm: banachmorita}, the above sequence implies $K_*(\MC) = 0$.
\end{proof}

\section{Representation theory background}
\subsection{Notation}

We choose a \textit{standard} positive system of restricted roots for $\Delta(\fg_0, \fa_{0,\min})$, denoted $\Delta^+$.  Let $\fn_{0,\min} = \bigoplus_{\alpha \in \Delta^+} \fg_{0,\alpha}$, where $\fg_{0,\alpha}$ denotes the corresponding restricted root space to the root $\alpha \in \Delta(\fg_0, \fa_{0,\min})$, and write $N_{\min} = \exp(\fn_{0,\min})$. The corresponding Iwasawa decomposition is
\[G = KA_{\min}N_{\min}.\]
We write $k: G \to K$ and $a: G \to A_{\min}$ for the corresponding projections. 

The standard minimal parabolic subgroup of $G$ is denoted 
\[P_{\min{}} = M_{\min}A_{\min}N_{\min},\]
where $M_{\min} = Z_K(\fa_{\min})$ denotes the centralizer of $\fa_{\min}$ in $K$. We will only consider parabolic subgroups which contain $A_{\min}$, and these are denoted as $P = MAN$, where $N$ is the unipotent radical of $P$, and $MA = P \cap \theta(P)$ is the Levi subgroup of $P$. The standard parabolic subgroups are those containing $P_{\min}$. We write $\Delta^+_P$ to denote the roots with respect to $\fa_0$ appearing in the decomposition 
\[\fn_0 = \bigoplus_{\alpha \in \Delta^+_P} \fg_{0,\alpha}.\] 
We write 
\[\fa^*_{0,P,+} = \{\lambda \in \fa_0^*: \langle \lambda, \alpha\rangle > 0 \text{ for all }\alpha \in \Delta^+_P\} \subset \fa_0^*\]
for the corresponding (open) Weyl chamber. We write $\fa^*_{P,+}$ for elements $\lambda \in \fa^*$ such that $\Re \lambda \in \fa^*_{0, P, +}$.

We write $\log: A \to \fa_0$ for the inverse of the exponential map. Given $\lambda \in \fa^*$, we set
\[a^\lambda = e^{\lambda(\log a)}.\]

\begin{defn}\label{defn: cuspidal parabolics}
	A parabolic subgroup $P = MAN$ is \textit{cuspidal} if there exists a Cartan subgroup $T \subset M$ contained entirely within $K \cap M$. Let $\ft_0 = \Lie(T)$.
\end{defn}

\begin{defn}\label{defn: HC parameter of sigma}
	We will write $\widehat{M}_d$ to denote the isomorphism classes of square-integrable representations of $M$ (\cite[1.3.2]{wallachbook1}). When we write $\sigma \in \widehat{M}_d$, we refer to a fixed representative of the corresponding isomorphism class. We refer to such elements as \textit{discrete series representations} of $M$.
	
	Given $\sigma \in \widehat{M}_d$, we use $\Lambda_\sigma \in i \ft_0^*$ to denote the Harish-Chandra parameter of $\sigma\vert_{M_0}$, where $M_0$ denotes the connected component of $M$ at the identity (see \cite[Theorem 9.20]{knappoverview}).
\end{defn}

\subsection{The Paley-Wiener space and some representation theory}

In order to prove Theorem \ref{thm: mainthm}, we use a notion of Fourier transform on real reductive groups which apply to elements of $\cS(G,F)$.

\begin{defn}
	A cuspidal pair is a pair $(P, \sigma)$ consisting of a cuspidal parabolic subgroup $P = MAN$ and a discrete series representation $\sigma \in \widehat{M}_d$ of $M$.
\end{defn}

\begin{defn}Let $(P, \sigma)$ be a cuspidal pair, let $V_\sigma$ be a Hilbert space representative for $\sigma$, and let $V_\sigma^\infty$ denote the corresponding smooth vectors. We define the Hilbert space $\cH_\sigma$ as the completion of
	\[\{\varphi: K \xrightarrow{C^\infty} V^\infty_\sigma \mid \varphi(mk) = \sigma(m)^{-1} \varphi(k) \text{ for all } m \in M \cap K, k \in K\}\]
	with respect to the inner product $\langle \varphi,\psi \rangle = \int_K \langle \varphi(k), \psi(k)\rangle_{V_\sigma}\, dk$.
	
	We write $I_\sigma$ for the space of $K$-finite elements of $\cH_\sigma$. Given $\lambda \in \bC$, we define for $g \in G$,
	\begin{equation}\label{eq: princp series defn}
		\pi^P_{\sigma,\lambda}(g) \in \cB(\cH_\sigma), \quad (\pi^P_{\sigma,\lambda}(g)\varphi)(k) = a_P(g^{-1}k)^{-(\lambda+\rho_P)} \varphi(k_P(g^{-1}k)),
	\end{equation}
	where $a_P: P \to A$ is the projection onto $A$, and $k_P: G \to K$ is a choice of element in the decomposition $G = KP$ (unique up to an element of $K \cap M$), and $\rho_P = \frac{1}{2} \sum_{\alpha \in \Delta_P^+} (\dim \fg_{0,\alpha})\alpha$. The $G$-representations $(\pi^P_{\sigma,\lambda}, \cH_\sigma)$ and corresponding $(\fg, K)$-modules $(\pi^P_{\sigma,\lambda}, I_\sigma)$ are known as the \textit{principal series representations} corresponding to the pair $(P,\sigma)$ (see \cite[5.2]{wallachbook1}).
\end{defn}

\begin{defn}
	Fix a finite set $F \subset \widehat{K}$. Given a cuspidal pair $(P, \sigma)$, define
	\[\pi_\sigma = \pi_\sigma^P: \cS(G,F) \to C(\fa^*, \End(p_F I_\sigma))\]
	by
	\[\pi_\sigma(\phi)(\lambda)v = \pi^P_{\sigma,\lambda}(\phi)v = \int_G \phi(g) \pi^P_{\sigma, \lambda}(g)v\,dg,\]
	for each $\lambda \in \fa^*$ and $v \in p_F I_\sigma$. The above integral converges from the proof of Lemma \ref{lem: fourier transform S(G)} below.
\end{defn}
\begin{defn}
	Given a Euclidean vector space $V_0$ with complexification $V$, the \textit{Paley-Wiener space} of $V$ is defined to be
	\begin{align*}
		\PW(V)= \{f: V \to \bC:& \quad\text{$f$ is holomorphic and } \\
		&\sup_{\|\Re \lambda\| \leq k} (1+|\lambda|)^N |f(\lambda)| < \infty \, \text{ for all } N,k \in \bN\}.
	\end{align*} 
	The space $\PW(V)$ is a Fr{\'e}chet algebra with respect to the norms
	\[\|f\|_{\PW(V), N, k} = \sup_{|\Re \lambda|\leq k}(1+|\lambda|)^N |f(\lambda)|.\]
\end{defn}
Note that we will often regard $\fa^*$ as the complexification of $\fa^*_0$, which is Euclidean by use of the inner product \eqref{eq: innerproduct}.
\begin{lem}\label{lem: fourier transform S(G)}
	Fix a cuspidal pair $(P, \sigma)$. For each $\phi \in \cS(G,F)$ and vectors $v, w \in p_F I_{\sigma}$, the map $\lambda \mapsto \langle \pi^P_{\sigma, \lambda}(\phi)v, w\rangle$ defines an element of $\PW(\fa^*)$. That is, 
	\[\pi_\sigma(\cS(G,F)) \subset \PW(\fa^*, \End(p_F I_\sigma)).\]
	Moreover, the map $\pi_\sigma: \cS(G,F) \to \PW(\fa^*, \End(p_F I_\sigma))$ is a continuous homomorphism between Fr{\'e}chet algebras.
\end{lem}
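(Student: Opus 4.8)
The plan is to prove the statement in two stages: first establish the pointwise holomorphy and rapid decay of the matrix coefficients $\lambda \mapsto \langle \pi^P_{\sigma,\lambda}(\phi) v, w\rangle$, showing the integral defining $\pi_\sigma(\phi)(\lambda)$ converges and lands in $\PW(\fa^*)$; then upgrade the estimates to the continuity of $\pi_\sigma$ as a map of Fréchet algebras. Throughout I would use the standard majorization for the matrix coefficients of the non-unitary principal series: from \eqref{eq: princp series defn}, for $v, w$ supported on a finite set $F$ of $K$-types, one has a bound of the shape $|\langle \pi^P_{\sigma,\lambda}(g) v, w\rangle| \leq C_F\, \|g\|^{c}\, e^{|\Re\lambda|\,\cdot\,|\log a_P(\cdot)|}$, which since $g \in KA_{\min}K$ can be controlled by $C_F (1+\|g\|)^c \|g\|^{|\Re \lambda|}$ for a constant $c$ depending only on $F$. (The point is that restricting to $p_F I_\sigma$ on both sides makes the $K$-integral in the inner product harmless and gives a uniform constant.) Combined with $\|g\|^N (L_u R_v \phi) \in L^1(G)$, this makes $\int_G \phi(g) \langle \pi^P_{\sigma,\lambda}(g) v, w\rangle\, dg$ absolutely convergent for every $\lambda$, and locally uniformly convergent in $\lambda$; since each $g \mapsto \langle \pi^P_{\sigma,\lambda}(g)v,w\rangle$ is holomorphic in $\lambda$ (entire, being an exponential in $\lambda$) and the convergence is locally uniform, the integral is holomorphic in $\lambda$ by Morera's theorem. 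Because $p_F I_\sigma$ is finite-dimensional, holomorphy of all matrix coefficients gives holomorphy of $\lambda \mapsto \pi_\sigma(\phi)(\lambda) \in \End(p_F I_\sigma)$.

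Next I would establish the Paley--Wiener decay: for each $N, k \in \bN$ we need $\sup_{|\Re\lambda|\leq k}(1+|\lambda|)^N |\langle \pi^P_{\sigma,\lambda}(\phi)v,w\rangle| < \infty$. The $e^{|\Re\lambda|}$-type growth in $g$ is bounded by $(1+\|g\|)^{k}$ on the strip $|\Re\lambda| \leq k$, so it suffices to produce the polynomial-in-$\lambda$ decay. Here is the standard trick: one integrates by parts using a Casimir-type element (or more precisely the fact that $\pi^P_{\sigma,\lambda}$ acting on the Casimir of $\fa$, or simply differentiation along $\fa_{0}$) to gain powers of $\lambda$. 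Concretely, there is an element $\Omega \in \cU(\fg)$ (built from a basis of $\fa_0$, acting via the right regular representation) such that $\pi^P_{\sigma,\lambda}(R_\Omega \phi) = q(\lambda)\, \pi^P_{\sigma,\lambda}(\phi)$ for a polynomial $q$ with $|q(\lambda)| \gtrsim |\lambda|^2$ away from a compact set; iterating and using $R_{\Omega^m}\phi \in \cS(G)$ shows $(1+|\lambda|)^N \pi^P_{\sigma,\lambda}(\phi)$ stays bounded on the strip. (Alternatively one uses that $\lambda \mapsto a_P(x)^{-\lambda}$ can be differentiated in the $\fa_0$-directions and the derivative falls on $\phi$ via a right-invariant vector field, exactly as in the classical Fourier-transform argument that smoothness gives decay.) Assembling the contributions one gets, for suitable $N', k'$ depending on $N, k$ (and on $F$ and $\sigma$), an estimate
\[
\sup_{|\Re\lambda|\leq k}(1+|\lambda|)^N \|\pi_\sigma(\phi)(\lambda)\|_{\End(p_F I_\sigma)} \;\leq\; C_{F,\sigma,N,k}\, \|\phi\|_{\cS(G),N',k'}.
\]
This is simultaneously the membership $\pi_\sigma(\phi) \in \PW(\fa^*,\End(p_F I_\sigma))$ and the continuity of $\pi_\sigma$ with respect to the defining seminorms of the two Fréchet algebras.

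Finally, that $\pi_\sigma$ is an algebra homomorphism is essentially formal: for $\phi_1, \phi_2 \in \cS(G,F)$ one has $\pi^P_{\sigma,\lambda}(\phi_1 * \phi_2) = \pi^P_{\sigma,\lambda}(\phi_1)\pi^P_{\sigma,\lambda}(\phi_2)$ from Fubini (justified by the absolute convergence just established and the submultiplicativity of $\cS(G)$ under convolution, which is part of it being a Fréchet algebra), and since $\phi_i = p_F \phi_i p_F$ the composition stays in $\End(p_F I_\sigma)$; this identity of holomorphic functions is precisely multiplication in $\PW(\fa^*,\End(p_F I_\sigma))$. I expect the main obstacle to be the decay estimate — specifically, pinning down the vector field / enveloping-algebra element that converts $\fa$-directional growth of the principal-series cocycle into polynomial decay in $\lambda$, and tracking that the resulting bound is controlled by a single $\cS(G)$-seminorm uniformly on each strip; the holomorphy and the homomorphism property are routine once the convergence estimate is in hand.
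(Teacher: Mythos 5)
Your overall strategy mirrors the paper's: majorize the matrix coefficients of $\pi^P_{\sigma,\lambda}$, deduce absolute convergence and holomorphy, gain polynomial decay in $\lambda$ by ``integration by parts'' through the enveloping algebra, and finish formally for the homomorphism property. The convergence, holomorphy, and homomorphism steps are fine. But the decay estimate --- which you yourself flag as the main obstacle --- has a genuine gap as written.

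You propose an element $\Omega \in \cU(\fg)$ ``built from a basis of $\fa_0$'' with $\pi^P_{\sigma,\lambda}(R_\Omega\phi) = q(\lambda)\pi^P_{\sigma,\lambda}(\phi)$ and $|q(\lambda)| \gtrsim |\lambda|^2$ off a compact set, and then iterate. Two things go wrong. First, elements of $\cU(\fa)$ do not act by scalars on the principal series; only central elements $z \in \cZ(\fg)$ do, via the infinitesimal character. Second, and more seriously, even after restricting to $\cZ(\fg)$: if $Q$ is the Harish-Chandra image of $z$, then $Q$ is $W(\fg,\fh)$-invariant, and the restriction of $Q$ to the affine slice $\Lambda_\sigma + \fa^*$ has zeros running to infinity. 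The Casimir is the instructive example: its Harish-Chandra image is $\nu \mapsto \langle\nu,\nu\rangle - \langle\rho,\rho\rangle$, which on $\Lambda_\sigma + \lambda$ becomes $|\Lambda_\sigma|^2 + \lambda^2 - |\rho|^2$ (with $\lambda^2 = \sum \lambda_i^2$, $\lambda \in \fa^* \cong \bC^n$), and the variety $\{\lambda : \lambda^2 = \text{const}\}$ is unbounded in $\bC^n$. So no single $q(\lambda)$ can satisfy $|q(\lambda)| \gtrsim |\lambda|^2$ outside a compact set, and iterating powers of one $\Omega$ cannot repair this. The paper (following Delorme's \cite[(1.27)]{delormePW}) instead chooses a \emph{family} $Q_1, \ldots, Q_r \in \bC[\fh^*]^{W(\fg,\fh)}$ with $(1+|\nu|^2)^N \leq |Q_1(\nu)| + \cdots + |Q_r(\nu)|$, so the \emph{sum of absolute values} dominates even where individual $Q_i$ vanish, and then applies the corresponding $z_i \in \cZ(\fg)$ to $\phi$. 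Your ``alternative'' of differentiating $a_P(g^{-1}k)^{-\lambda}$ in $\fa_0$-directions and pushing the derivative onto $\phi$ also does not work as stated: the dependence of $a_P(g^{-1}k)$ on $g$ is nonlinear and the resulting operator on $\phi$ is not one for which $\pi^P_{\sigma,\lambda}$ acts by a scalar, so the bookkeeping collapses. To repair your argument, replace the single $\Omega$ by a finite family in $\cZ(\fg)$ realizing Delorme's domination estimate; the rest of your outline then carries through and gives the Fréchet continuity claim as well.
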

\begin{proof}
	The proof for $C_c^\infty(G)$ in place of $\cS(G)$ is given in \cite[Lemma 1]{delormePW}. We shall adapt the argument given there. We use the following estimate from \cite[(1.25)]{delormePW} (which we have relaxed slightly):
	\[\|\pi_{\sigma, \lambda}^P(g)\| \leq \|g\|^{2 |\Re \lambda|}.\]
	Then we see that, for $\varphi, \psi \in I^P_{\sigma, \lambda}$,
	\[|\langle \pi_{\sigma, \lambda}^P(\phi) v, w\rangle| \leq \int_G|\phi(g)| \|g\|^{2|\Re \lambda|} \|v \|\|w\| \, dg.\]
	Therefore,
	\[\sup_{|\Re \lambda| \leq k}|\langle \pi_{\sigma, \lambda}^P(\phi) \varphi, \psi\rangle| \leq \int_G |\phi(g)| \|g\|^{2k} \|v\|\|w\| \, dg, \]
	which is finite by the definition of $\cS(G)$ (in particular, $\pi^P_{\sigma, \lambda}(\phi)$ is well-defined). From the definition \eqref{eq: princp series defn} of $\pi^P_{\sigma, \lambda}(g)$, we see that $\langle \pi_{\sigma, \lambda}(\phi) v, w \rangle$ is a holomorphic function in $\lambda$.
	
	Now fix $N \in \bN$. Set $\fh = \ft + \fa$ (recall Definition \ref{defn: cuspidal parabolics}), which is a Cartan subalgebra of $\fg$. Let $W(\fg, \fh)$ denote the corresponding Weyl group. From \cite[(1.27)]{delormePW} there exist $Q_1, \ldots, Q_r \in \bC[\fh^*]^{W(\fg, \fh)}$ such that
	\begin{equation}\label{eq: fourier transform S(G) HC est}
		(1+ |\nu|^2)^N \leq  |Q_1(\nu)| + \cdots + |Q_r(\nu)|
	\end{equation}
	for $\nu \in \fh^*$. Let $\cZ(\fg)$ denote the center of the enveloping algebra $\cU(\fg)$ of $\fg$. Choosing $z_1, \ldots, z_r \in \cZ(\fg)$ corresponding to $Q_i$ via the Harish-Chandra isomorphism (see \cite[Theorem 3.2.3]{wallachbook1}), then
	\[\pi_{\sigma, \lambda}^P(L_{z_i}\phi) = \pi_{\sigma,\lambda}^P(z_i) \pi_{\sigma,\lambda}^P(\phi) = Q_i(\Lambda_\sigma + \lambda)\pi_{\sigma,\lambda}^P(\phi),\]
	where $\Lambda \in i\ft^*_0$ is as in Definition \ref{defn: HC parameter of sigma}.
	
	Applying \eqref{eq: fourier transform S(G) HC est} to $\nu = \Lambda_\sigma + \lambda$, we have
	\begin{align*}
		\sup_{|\Re \lambda | \leq k} (1 + |\Lambda_\sigma|^2 + |\lambda|^2)^N &|\langle \pi_{\sigma, \lambda}^P(\phi) v, w\rangle|
		\\ \leq & \sum_{i=1}^r \sup_{|\Re \lambda | \leq k} |\langle \pi_{\sigma, \lambda}^P( L_{z_i}\phi) v, w\rangle| 
		< \infty
	\end{align*}
	As $\sigma$ is fixed, the above is equivalent to the condition defining $\PW(\fa^*)$. From the definition of the topologies defined for $\cS(G)$ and $\PW(\fa^*, \End(p_F I_\sigma))$, the above estimate proves that $\pi_\sigma$ is continuous. The fact that $\pi_\sigma$ is an algebra homomorphism follows from the identity
	\[\pi_{\sigma,\lambda}^P(\phi_1 * \phi_2) = \pi_{\sigma,\lambda}^P(\phi_1) \pi_{\sigma,\lambda}^P(\phi_2)\]
	for $\phi_1, \phi_2 \in \cS(G)$, which holds for any $G$-representation.
\end{proof}

By the Plancherel formula, the map $\bigoplus_{(P,\sigma)} \pi_\sigma$ is injective on $\cS(G,F)$ (in fact, only the standard minimal parabolic subgroup is needed in the direct sum). This map is known as the Fourier transform, and an interesting problem is to characterize the Fourier image as functions on the various $\fa^*$ with particular properties. Such a characterization is known as a Paley-Wiener theorem. For $C_c^\infty(G)$, the Fourier image was first characterized by Arthur \cite{arthur}, and later characterized in a different way by Delorme \cite{delormePW} (both characterizations turn out to be the same \textit{a priori}; see \cite{vdbSouaifi}). 

We will make use of techniques that Delorme developed in \cite{delormePW}, and adapt these to $\cS(G)$. These techniques make use of several deep results in representation theory, including the theory of Knapp-Stein intertwining operators, Vogan's minimal $K$-types, and Vogan-Zuckerman classification. The results used are summarized in the first sections of \cite{delormePW} and \cite{delormeHC}. We list the definitions and theorems relevant to the exposition given here.

\begin{defn}
	Given a cuspidal pair $(P, \sigma)$, we write $\minK{\sigma}$ to denote the set of $K$-types appearing in $I_\sigma$ that have minimal length out of the $K$-types which appear in $I_\sigma$. Such $K$-types will be called the \textit{minimal $K$-types} for $\sigma$. We will use the notation $\|\sigma\|$ to denote the length of any element of $\minK{\sigma}$.
\end{defn}
\begin{thm}[{\cite[Theorem 1.1]{voganmainpaper}}]
	The elements of $\minK{\sigma}$ appear with multiplicity $1$ in $I_\sigma$.
\end{thm}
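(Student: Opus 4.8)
The plan is to reduce the multiplicity statement to a branching problem for the pair $(K, M\cap K)$ and then appeal to the structure theory underlying Vogan's minimal $K$-type classification. First I would pass to the compact picture: the defining formula for $\cH_\sigma$ identifies $I_\sigma$, as a $K$-module, with the induced module $\Ind_{M\cap K}^K(\sigma|_{M\cap K})$, so Frobenius reciprocity gives
\[ [\gamma : I_\sigma] \;=\; \dim \Hom_{M\cap K}\bigl(V_\gamma|_{M\cap K},\, V_\sigma\bigr) \]
for every $\gamma \in \widehat K$. Thus the assertion becomes: if $\gamma$ has minimal length among those $K$-types whose restriction to $M\cap K$ meets $\sigma$, then $\dim\Hom_{M\cap K}(V_\gamma, V_\sigma) = 1$.

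Next I would bring in the lowest $M\cap K$-type of the discrete series $\sigma$. Since $M$ possesses a compact Cartan $T$, $\sigma$ has a distinguished ``Blattner'' $M\cap K$-type $\mu$, with highest weight a fixed $\rho$-shift of $\Lambda_\sigma$, which occurs in $\sigma$ with multiplicity one and is lowest among the $M\cap K$-types of $\sigma$. Vogan's description of $\minK{\sigma}$ says that a $K$-type lies in $\minK{\sigma}$ precisely when its restriction to $M\cap K$ contains $\mu$ and its length $\langle \overline{\gamma} + 2\rho_c, \overline{\gamma} + 2\rho_c\rangle$ is minimal subject to that constraint; concretely, such a $\overline{\gamma}$ is obtained from the highest weight of $\mu$ by adding a prescribed nonnegative combination of roots occurring in $\fn$. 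The multiplicity-one conclusion then follows by checking that, for such a minimal $\gamma$, the only $M\cap K$-type of $\sigma$ appearing in $V_\gamma|_{M\cap K}$ is $\mu$, and that it occurs once --- a weight count that uses the positivity of $\Lambda_\sigma$ in $i\ft_0^*$ to exclude a second copy of $\mu$ or any larger $M\cap K$-type of $\sigma$.

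The main obstacle is precisely this last weight-combinatorial step, which is the technical heart of \cite[Theorem 1.1]{voganmainpaper}: it requires a careful comparison of the length function on $\widehat K$ with the norm appearing in Harish-Chandra's parametrization of the discrete series of $M$, and it is here that the word ``minimal'' in the definition of $\minK{\sigma}$ does real work. In practice, since this statement is exactly a part of Vogan's theorem and the remainder of the paper uses it only as a black box (together with the fact that the sets $\minK{\sigma}$ partition $\widehat K$), I would simply cite \cite[Theorem 1.1]{voganmainpaper}; the Frobenius-reciprocity reduction above is the only thing one needs to say to connect it to the present setup.
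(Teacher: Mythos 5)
The paper offers no proof of this statement: it is simply a citation of Vogan's Theorem~1.1, and your closing paragraph correctly identifies that citing Vogan is the right treatment here, which matches the paper. The Frobenius-reciprocity reduction you sketch --- identifying $I_\sigma|_K$ with $\Ind_{M\cap K}^K(\sigma|_{M\cap K})$ so that $[\gamma : I_\sigma] = \dim\Hom_{M\cap K}(V_\gamma|_{M\cap K}, V_\sigma)$, and then invoking the lowest $M\cap K$-type of the discrete series $\sigma$ --- is an accurate picture of how Vogan's argument is set up, and your acknowledgment that the genuinely hard part (the weight-combinatorial comparison forcing multiplicity one) is Vogan's own work, not something to reprove, is exactly right.
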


We will need, in particular, Vogan-Zuckerman classification on the unitary principal series. The statement we use is \cite[(1.7)]{delormePW}, but the reference is \cite[Chapter 6]{voganbook}. In the following theorem, we use $I^P_{\sigma,\lambda}$ to denote the $(\fg, K)$-module $(\pi^P_{\sigma,\lambda}, I_\sigma)$.
\begin{thm}\label{thm: voganclassification}
	Given a cuspidal pair $(P,\sigma)$ and $\lambda \in \overline{\fa^*_{P,+}}$, there is a unique decomposition (up to reordering)
	\begin{equation}\label{eq: voganclassification}
		I^P_{\sigma, \lambda} \cong I^P_{\sigma,\lambda}[\mu_1] \oplus \cdots \oplus I^P_{\sigma,\lambda}[\mu_l],
	\end{equation}
	where $\mu_i \in \minK{\sigma}$, and  $I^P_{\sigma,\lambda}[\mu_i]$ are subrepresentations with a unique quotient $J^P_{\sigma,\lambda}[\mu_i]$ containing $\mu_i$. In particular, every irreducible subquotient of $I^P_{\sigma,\lambda}$ contains a minimal $K$-type of $I_\sigma$. Moreover, if $(Q,\sigma)$ is another cuspidal pair with the same Levi subgroup as $P$, and if $\lambda \in \overline{\fa^*_{P,+}} \cap \overline{\fa^*_{Q,+}}$, then  $J_{\sigma,\lambda}^P[\mu] = J_{\sigma,\lambda}^Q[\mu]$.
\end{thm}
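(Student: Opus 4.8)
The plan is to deduce this from the Langlands classification together with Vogan's theory of minimal $K$-types; the statement is essentially \cite[Chapter 6]{voganbook} (and is recorded as \cite[(1.7)]{delormePW}), so the task is to assemble the right ingredients rather than to produce a new argument.

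First I would reduce to genuine Langlands standard modules by a wall-crossing argument. Given $\lambda \in \overline{\fa^*_{P,+}}$, let $S$ be the set of simple roots of $\Delta^+_P$ on which $\Re\lambda$ vanishes, let $M'A'$ be the Levi subgroup containing $MA$ whose restricted roots are spanned by $S$, and let $P \subseteq P'$ be the corresponding standard parabolic $P' = M'A'N'$. Writing $\fa = \fa_{M'} \oplus \fa'$ and $\lambda = \lambda_0 + \lambda_1$ accordingly, one checks $\Re\lambda_0 = 0$ while $\Re\lambda_1$ is strictly dominant for $P'$. By induction in stages,
\[ I^P_{\sigma,\lambda} \;\cong\; \Ind_{P'}^G\big(\tau \otimes e^{\lambda_1}\big), \qquad \tau := \Ind_{P\cap M'}^{M'}\big(\sigma \otimes e^{\lambda_0}\big),\]
and $\tau$ is tempered because $\sigma$ is discrete series and $\Re\lambda_0 = 0$. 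Knapp--Stein $R$-group theory then gives a finite decomposition $\tau \cong \bigoplus_j \tau_j$ into pairwise distinct irreducible tempered $M'$-modules, whence
\[ I^P_{\sigma,\lambda} \;\cong\; \bigoplus_j \Ind_{P'}^G\big(\tau_j \otimes e^{\lambda_1}\big).\]
Each summand is a Langlands standard module, so it has a unique irreducible quotient $J_j$; since the lowest $K$-types of $J_j$ occur in the standard module with multiplicity one and survive to $J_j$, no proper submodule of the summand can avoid the kernel of the quotient map, so that kernel is the unique maximal submodule and the summand is indecomposable. The summands are moreover pairwise non-isomorphic (their Langlands quotients are), so by Krull--Schmidt the displayed decomposition is the unique one up to reordering.

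Next I would match the summands with minimal $K$-types. As a $K$-module, $I^P_{\sigma,\lambda}$ is $\Ind_{K\cap M}^K(\sigma|_{K\cap M})$ independently of $\lambda$, so the set $\minK{\sigma}$ of minimal $K$-types of $I_\sigma$ is distributed among the summands. By Vogan's analysis of the lowest $K$-types of tempered and standard modules in \cite[Chapter 6]{voganbook}, the lowest $K$-types of $\Ind_{P'}^G(\tau_j \otimes e^{\lambda_1})$ are exactly the elements of $\minK{\sigma}$ lying in that summand; each occurs with multiplicity one, survives to $J_j$, and appears in every irreducible subquotient of the summand. Choosing such a $\mu \in \minK{\sigma}$ for each $j$ and setting $I^P_{\sigma,\lambda}[\mu] := \Ind_{P'}^G(\tau_j \otimes e^{\lambda_1})$ and $J^P_{\sigma,\lambda}[\mu] := J_j$ then yields the asserted decomposition, the fact that $J^P_{\sigma,\lambda}[\mu]$ contains the $K$-type $\mu$, and the statement that every irreducible subquotient of $I^P_{\sigma,\lambda}$ contains a minimal $K$-type of $I_\sigma$. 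For the last clause of the theorem: if $(Q,\sigma)$ shares the Levi $MA$ and $\lambda$ lies in both closed chambers, then the set $S$ and the Levi $M'A'$ are unchanged, and the parabolic with Levi $M'A'$ for which $\Re\lambda_1$ is strictly dominant is unique, so $P' = Q'$; the data $(\tau_j, \lambda_1)$, the standard modules, and their Langlands quotients are then literally identical, and the matching with $\minK{\sigma}$ above does not refer to $P$ versus $Q$, so $J^P_{\sigma,\lambda}[\mu] = J^Q_{\sigma,\lambda}[\mu]$.

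The main obstacle is not any individual step but the $K$-type bookkeeping in the preceding paragraph — identifying the indecomposable summands, and all of their irreducible subquotients, with elements of $\minK{\sigma}$ — which is precisely the deep content of \cite[Chapter 6]{voganbook} (equivalently \cite[(1.7)]{delormePW}); I would quote it rather than reprove it, since everything else is formal manipulation of parabolically induced representations.
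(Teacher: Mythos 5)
The paper does not supply a proof of this theorem: it states it as a citation, saying ``The statement we use is \cite[(1.7)]{delormePW}, but the reference is \cite[Chapter 6]{voganbook}.'' Your proposal reconstructs a plausible derivation of that cited statement via wall-crossing to Langlands standard modules, Knapp--Stein $R$-group decomposition of the tempered induced representation $\tau$, uniqueness of Langlands quotients, and Krull--Schmidt; and you correctly flag that the essential content (the identification of the indecomposable summands and all their subquotients with minimal $K$-types) is exactly what must be quoted from \cite[Chapter 6]{voganbook}. So in effect your argument and the paper's ``proof'' bottom out in the same reference; the only difference is that you have unpacked how the Langlands-classification bookkeeping connects the cited material to the precise form of the theorem, which the paper takes as read. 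A few of your intermediate assertions (e.g., that the summands are pairwise non-isomorphic because their Langlands quotients are, and that the unique irreducible quotient forces indecomposability) are fine but deserve a sentence of justification: uniqueness of the irreducible quotient up to isomorphism forces a unique maximal submodule --- otherwise $V/(W_1 \cap W_2)$ would embed into a semisimple module and exhibit two distinct irreducible quotients --- and the Langlands classification's injectivity gives non-isomorphism of the quotients. None of this changes the verdict: your route matches the paper's, namely defer to Vogan.
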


We will also make use of the Knapp-Stein intertwining operators, as well as a particular normalization of them.
\begin{thm}[See \cite{knappstein}]\label{thm: knappstein}
	Fix two parabolic subgroups $P = MAN_P$ and $Q = MAN_Q$ with the same Levi subgroup $MA$. Let $\sigma \in \widehat{M}_d$. If $\lambda \in \fa^*_{P,+}$ and $v \in I_\sigma$, the integral
	\[(A(Q,P,\sigma,\lambda)v)(u) = \int_{\theta({N}_P) \cap N_Q} a_P(\nbar)^{-\lambda-\rho}v(uk_P(\nbar^{-1})) d\nbar\]
	converges for each $u \in K$, and defines an element $A(Q,P,\sigma,\lambda)v \in I_\sigma$. Moreover, the map $v \mapsto A(Q,P,\sigma,\lambda)v$ defines an intertwining operator
	\[A(Q,P,\sigma,\lambda): (\pi^P_{\sigma,\lambda}, I_\sigma) \to (\pi^Q_{\sigma,\lambda}, I_\sigma).\]
	Finally, for each $v,w \in I_\sigma$, the map $\lambda \mapsto \langle A(Q,P,\sigma,\lambda)v, w\rangle$ extends to a meromorphic function in $\lambda$, and in this way we obtain intertwining operators $A(Q,P,\sigma,\lambda)$ for generic $\lambda \in \fa^*$.
\end{thm}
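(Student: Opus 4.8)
The plan is to follow the classical argument of Knapp and Stein in three stages: convergence of the defining integral on the cone $\fa^*_{P,+}$; the intertwining property, which simultaneously gives membership in $I_\sigma$; and finally meromorphic continuation by reduction to rank one.

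For convergence, fix $u \in K$. Since $v$ is continuous and $K$ compact, $\sup_{x\in K}\|v(x)\|_{V_\sigma} < \infty$, so it suffices to bound $\int_{\theta(N_P)\cap N_Q} |a_P(\nbar)^{-\Re\lambda-\rho}|\,d\nbar$. The roots of $\fa_0$ occurring in the Lie algebra of $\theta(N_P)\cap N_Q$ all have their negatives in $\Delta^+_P$, so the standard estimate — proved by induction on the number of such roots, or read off the Gindikin--Karpelevich integral formula as a product of Beta-type factors — shows this integral is finite whenever $\langle \Re\lambda,\alpha\rangle > 0$ for those roots, in particular for every $\lambda \in \fa^*_{P,+}$. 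The same bound, uniform on compact subsets of the cone, permits differentiating under the integral sign, so $\lambda \mapsto (A(Q,P,\sigma,\lambda)v)(u)$ is holomorphic there and $A(Q,P,\sigma,\lambda)v$ is again a smooth function on $K$.

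Next I would check the intertwining property, which also yields membership in $I_\sigma$. A change of variables in $\nbar$ together with the cocycle identities for $a_P$ and $k_P$ coming from $G = KP$ gives $(A(Q,P,\sigma,\lambda)v)(mu) = \sigma(m)^{-1}(A(Q,P,\sigma,\lambda)v)(u)$ for $m \in M\cap K$, so the image is a section of the correct homogeneous bundle. Writing out $A(Q,P,\sigma,\lambda)\pi^P_{\sigma,\lambda}(g)v$ and substituting the translate of $\nbar$ induced by $g$ acting on $\theta(N_P)\cap N_Q$, while balancing the Jacobian of that substitution against the changes in $a_P$ and $a_Q$, produces $\pi^Q_{\sigma,\lambda}(g)A(Q,P,\sigma,\lambda)v$; this is the one genuinely computational step, and it establishes the intertwining relation on the cone. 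Since the cocycle defining $\pi^P_{\sigma,\lambda}$ and $\pi^Q_{\sigma,\lambda}$ is trivial on $K$, the operator $A(Q,P,\sigma,\lambda)$ commutes with the $K$-action, hence preserves $K$-isotypic subspaces, hence carries the $K$-finite subspace $I_\sigma$ into $I_\sigma$.

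The remaining stage, meromorphic continuation, is where the real work lies. I would reduce to rank one: choose parabolics $P = P_0, P_1, \dots, P_r = Q$ with common Levi $MA$, each $P_{i+1}$ obtained from $P_i$ by crossing a single wall, so that $\theta(N_{P_i})\cap N_{P_{i+1}}$ involves only one root (up to its root string) and the positivity cones are nested compatibly. On a common cone of convergence the multiplicativity formula $A(P_{i+2},P_i,\sigma,\lambda) = A(P_{i+2},P_{i+1},\sigma,\lambda)A(P_{i+1},P_i,\sigma,\lambda)$ holds (by Fubini, since the corresponding unipotent groups multiply without overlap), so it suffices to continue each rank-one factor; after passing to the relevant $\theta$-stable rank-one subgroup and restricting to a fixed $K$-type via Frobenius reciprocity, such a factor becomes a finite-matrix-valued one-variable integral of the shape $\int (1+|t|^2)^{-s}(\text{bounded})\,dt$, whose meromorphic continuation is explicit, its poles being the expected ones coming from ratios of $\Gamma$-functions (the rank-one Harish--Chandra $c$-function). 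Reassembling the factors gives the meromorphic family on all of $\fa^*$, and the intertwining relation persists by analytic continuation. The main obstacle is precisely this reduction: the genuine content is the rank-one computation, and one must arrange the chain $P_0,\dots,P_r$ and track the root-string combinatorics so that at every step the two factors share a nonempty domain of convergence on which multiplicativity applies. An alternative to the explicit rank-one integral is Bernstein's meromorphic-continuation principle, applied to the holomorphic family of pairings $(v,w)\mapsto\langle A(Q,P,\sigma,\lambda)v,w\rangle$ on the finite-dimensional spaces obtained after fixing $K$-types; but locating the poles still comes back to the rank-one normalization.
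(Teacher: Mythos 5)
The paper does not prove this theorem: it is stated with the attribution \textquotedblleft See \cite{knappstein}\textquotedblright\ and is imported wholesale from Knapp--Stein (and the follow-up literature of Schiffmann, Vogan--Wallach, Knapp's book) as background. So there is no proof of the paper's to compare you against; what you have written is a sketch of the classical argument that the paper defers to.

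On its own merits your outline is sound. The convergence analysis via the roots $\alpha\in\Delta^+_P\cap(-\Delta^+_Q)$ and the Gindikin--Karpelevich estimate is correct, and the uniform bound on compacta in $\fa^*_{P,+}$ does give holomorphy on the cone. The intertwining computation by change of variables in $\theta(N_P)\cap N_Q$, and the observation that $A(Q,P,\sigma,\lambda)$ is $K$-equivariant (the cocycle is trivial on $K$) and therefore preserves $I_\sigma$, is exactly the standard route and is the simplest way to see that the image is $K$-finite. For the meromorphic continuation, the reduction to a gallery $P=P_0,\dots,P_r=Q$ of pairwise-adjacent parabolics and multiplicativity of the unnormalized operators on overlapping cones of convergence is the Knapp--Stein strategy; you rightly flag that the cocycle on $N_Q$ is used here alongside Fubini, and that the substantive input is the rank-one computation (explicit $\Gamma$-function $c$-factors after fixing a $K$-type). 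Your suggested alternative via Bernstein's continuation principle on the finite-dimensional pairings is also a legitimate modern shortcut, at the cost of not immediately locating the poles. The only caveat I would add is that the multiplicativity step requires the chain to be a minimal gallery, i.e.\ $\ell(P,Q)=\sum\ell(P_i,P_{i+1})$, so that $\theta(N_{P_0})\cap N_{P_r}$ genuinely factors as a product of the intermediate groups; you gesture at this with \textquotedblleft nested compatibly\textquotedblright\ and \textquotedblleft track the root-string combinatorics,\textquotedblright\ but in a full write-up this is the combinatorial lemma that has to be spelled out.
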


\begin{defn}
	The family of operators $A(Q,P,\sigma,\lambda)$ from Theorem \ref{thm: knappstein} is called the \textit{family of (unnormalized) Knapp-Stein intertwining operators}. When $\sigma$ is unambiguous we will write $A(Q,P,\lambda)$ instead of $A(Q,P,\sigma,\lambda)$.

	Fixing a minimal $K$-type $\mu_0 \in \minK{\sigma}$, $A(Q, P, \sigma, \lambda)$ acts by a scalar on $p_{\mu_0} I_{\sigma}$ (since $\mu_0$ has multiplicity $1$), which we denote by $c_{\mu_0}(Q, P, \sigma, \lambda)$. The \textit{normalized} intertwining operator is
	\[\cA(Q,P,\lambda) = \cA(Q, P,\sigma, \lambda) = c_{\mu_0}(Q,P, \sigma,\lambda)^{-1}A(Q,P,\sigma,\lambda).\]
\end{defn}
\begin{thm}[{See \cite[(1.12)]{delormePW}}]\label{thm: norminterwiner}
	The operator $\cA(Q,P,\lambda)$ unitary on $i\fa^*_0$, and is independent of $\lambda$ on minimal $K$-types. When $P,Q,R$ are parabolic subgroups with common Levi subgroup, we have
	\[\cA(R,Q,\lambda)\cA(Q,P,\lambda) = \cA(R,P,\lambda), \quad \cA(P,Q,\lambda) \cA(Q,P,\lambda)= \Id_{I_\sigma}\]
	as meromorphic functions of $\lambda$. Additionally, for each finite subset $F \subset \widehat{K}$, the operator $\cA(Q,P,\lambda)$ is rational in $\lambda$ when restricted to $p_F I_\sigma$.
\end{thm}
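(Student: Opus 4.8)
The plan is to deduce all four assertions from the meromorphic continuation in Theorem~\ref{thm: knappstein}, from Schur's lemma for the generically irreducible principal series, and from the classical rank-one evaluation of the normalizing scalars $c_{\mu_0}(Q,P,\sigma,\lambda)$. I would begin with the cocycle relations. For generic $\lambda$ the $(\fg,K)$-module $I^P_{\sigma,\lambda}$ is irreducible and admissible, so $\Hom_{\fg,K}(I^P_{\sigma,\lambda},I^R_{\sigma,\lambda})$ is at most one-dimensional. Both $A(R,P,\lambda)$ and $A(R,Q,\lambda)A(Q,P,\lambda)$ act on the multiplicity-one subspace $p_{\mu_0}I_\sigma$ by the scalars $c_{\mu_0}(R,P,\sigma,\lambda)$ and $c_{\mu_0}(R,Q,\sigma,\lambda)c_{\mu_0}(Q,P,\sigma,\lambda)$ respectively, which are nonzero for generic $\lambda$; hence both operators are nonzero, and therefore proportional, say $A(R,Q,\lambda)A(Q,P,\lambda)=\eta(\lambda)A(R,P,\lambda)$ for a meromorphic scalar $\eta$. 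Restricting to $p_{\mu_0}I_\sigma$ identifies $\eta(\lambda)=c_{\mu_0}(R,Q,\sigma,\lambda)c_{\mu_0}(Q,P,\sigma,\lambda)c_{\mu_0}(R,P,\sigma,\lambda)^{-1}$, and dividing through by the normalizing scalars gives $\cA(R,Q,\lambda)\cA(Q,P,\lambda)=\cA(R,P,\lambda)$. Taking $R=P$, the composite is a self-intertwiner of the irreducible $I^P_{\sigma,\lambda}$, hence a scalar; evaluating on $p_{\mu_0}I_\sigma$ identifies it as $c_{\mu_0}(P,Q,\sigma,\lambda)c_{\mu_0}(Q,P,\sigma,\lambda)\,\Id$, so $\cA(P,Q,\lambda)\cA(Q,P,\lambda)=\Id$.

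For the rationality on $p_FI_\sigma$ and the independence of $\lambda$ on the minimal $K$-types, I would use the standard factorization of $A(Q,P,\sigma,\lambda)$ into a product of rank-one Knapp-Stein operators together with the classical explicit evaluation of the latter (cf.\ \cite{knappstein}). On the finite-dimensional space $p_FI_\sigma$, each rank-one factor, divided by the scalar by which it acts on $p_{\mu_0}I_\sigma$, is a rational matrix-valued function of $\lambda$: the only transcendental ingredient of a rank-one operator is precisely the ratio of $\Gamma$-functions making up that $p_{\mu_0}$-scalar. Multiplying the rank-one factors and dividing by their product $c_{\mu_0}(Q,P,\sigma,\lambda)$ then shows $\cA(Q,P,\lambda)|_{p_FI_\sigma}$ is rational. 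The same formulas show that for each $\mu\in\minK{\sigma}$ the scalar by which $A(Q,P,\sigma,\lambda)$ acts on the multiplicity-one subspace $p_\mu I_\sigma$ equals $c_{\mu_0}(Q,P,\sigma,\lambda)$ times a factor independent of $\lambda$; dividing by $c_{\mu_0}$ exhibits $\cA(Q,P,\lambda)$ as independent of $\lambda$ on $p_{\minK{\sigma}}I_\sigma$.

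For unitarity on $i\fa^*_0$, fix a generic $\lambda$ in this set at which $\cA(Q,P,\lambda)$ is defined and extends to a bounded operator on $\cH_\sigma$ (a classical rank-one estimate). Since $\pi^P_{\sigma,\lambda}$ and $\pi^Q_{\sigma,\lambda}$ are unitary on the common space $\cH_\sigma$, the Hilbert-space adjoint $\cA(Q,P,\lambda)^*$ is again an intertwiner, so $\cA(Q,P,\lambda)^*\cA(Q,P,\lambda)$ is a positive self-intertwiner of the irreducible $\pi^P_{\sigma,\lambda}$ and equals $t\,\Id$ for some $t>0$. But $\cA(Q,P,\lambda)$ acts by the scalar $1$ on the $\mu_0$-isotypic subspace by construction, hence so does $\cA(Q,P,\lambda)^*$, forcing $t=1$; thus $\cA(Q,P,\lambda)$ is unitary for generic $\lambda\in i\fa^*_0$. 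Combined with the rationality just established, this uniform boundedness forces $\cA(Q,P,\lambda)|_{p_FI_\sigma}$ to be holomorphic throughout $i\fa^*_0$, and unitarity at the remaining points follows by continuity.

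I expect the genuinely nontrivial input to be the explicit control of the rank-one $c$-functions attached to the various minimal $K$-types: the facts that dividing $A(Q,P,\sigma,\lambda)$ by $c_{\mu_0}$ cancels all of its transcendental dependence on $\lambda$ over each finite set of $K$-types, and that the scalars attached to all minimal $K$-types share a common $\lambda$-dependence. This is the representation-theoretic heart of the statement, contained in the work of Knapp-Stein and in the organization of \cite{delormePW}; I would quote it rather than reprove it, the rest being Schur's lemma and analytic continuation.
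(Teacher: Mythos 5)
The paper offers no proof of this theorem: it is stated as a recollection of classical facts, cited directly from Delorme's Paley--Wiener paper as (1.12), which in turn rests on the work of Knapp--Stein, Langlands, Harish-Chandra, and Vogan. So there is no ``paper's own proof'' to compare against. That said, your sketch is a faithful and essentially correct outline of the standard argument, and you correctly isolate the genuinely nontrivial analytic input.

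A few remarks on the details. The Schur-lemma route to the cocycle relations is the standard one and works cleanly: for generic $\lambda$ the module $I^P_{\sigma,\lambda}$ is irreducible, so $\Hom_{(\fg,K)}(I^P_{\sigma,\lambda},I^R_{\sigma,\lambda})$ is at most one-dimensional, both intertwiners are nonzero because the $c_{\mu_0}$-scalars are not identically zero, proportionality follows, and the $\mu_0$-restriction identifies the scalar; normalization kills it, and the identity extends meromorphically. Note that once $\cA(R,Q,\lambda)\cA(Q,P,\lambda)=\cA(R,P,\lambda)$ is established, the second identity is just the case $R=P$ together with the trivial observation that $\cA(P,P,\lambda)=\Id$, so the separate self-intertwiner argument is not strictly needed. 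The rationality on finite sets of $K$-types and the $\lambda$-independence of the action on minimal $K$-types both rest on the explicit rank-one gamma-factor evaluations of Knapp--Stein and on Vogan's analysis of minimal $K$-types, and you are right to quote these rather than reprove them; this is exactly the depth that the citation in the paper is carrying. For unitarity, your positivity/Schur argument is fine; to avoid invoking an a priori rank-one boundedness estimate on $\cH_\sigma$ one may work $K$-type by $K$-type, where the operators are finite-dimensional matrices: the Schur argument gives $\cA^*\cA=\Id$ on $K$-finite vectors at generic $\lambda\in i\fa^*_0$, so the rational matrix $\cA|_{p_\gamma I_\sigma}$ is unitary on a dense subset of $i\fa^*_0$, hence has no poles there, hence is holomorphic on $i\fa^*_0$, and unitarity extends by continuity.
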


Let $W(\fg_0, \fa_0) =  W(A) = N_K(\fa_0)/Z_K(\fa_0)$ denote the (restricted) Weyl group corresponding to $A$. We will write $W_\sigma = W_\sigma(A)$ for the stabilizer of $\sigma \in \widehat{M}_d$ under the action
\[(w \cdot \sigma)(m) :=\sigma(w^{-1}m).\]

\begin{thm}[See \cite{voganbook}, \cite{knappstein}, {\cite[Theorem 1]{delormeHC}}]\label{thm: WandRGroups}
	There are subgroups $W_\sigma^0, R_\sigma$ of $W_\sigma$, and a simply transitive action of $\widehat{R}_\sigma$ on $\minK{\sigma}$ with the following properties:
	\begin{itemize}
		\item $W_\sigma^0$ is a normal subgroup of $W_\sigma$, generated by reflections on $\fa^*$, and
		\[W_\sigma = R_\sigma \ltimes W_\sigma^0.\]
		\item $R_\sigma$ is isomorphic to a direct product of copies of $\bZ/2\bZ$.
		\item Given $\lambda \in \overline{\fa^*_{P,+}}$ and $\mu, \nu \in \minK{\sigma}$, two subquotients $J^P_{\sigma,\lambda}[\mu], J^P_{\sigma,\lambda}[\nu]$ of $I_{\sigma,\lambda}^P$ (notation from Theorem \ref{thm: voganclassification}) are isomorphic if and only if $\mu$ and $\nu$ are related by an element of $\widehat{R}_{\sigma}(\lambda)$, the characters of $R_\sigma$ which vanish on the set $\{w \in R_\sigma: w \lambda \in W_\sigma^0\lambda\}$.
	\end{itemize}
	Moreover, the subgroup $W_\sigma^0$ and action of $\widehat{R}_\sigma$ on $\minK{\sigma}$ are uniquely determined by these properties (see \cite[Theorem 1 (v)]{delormeHC}).
\end{thm}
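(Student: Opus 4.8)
This theorem is a classical packaging of results of Knapp--Stein, Vogan and Delorme, so in practice one cites it; here is the shape a proof takes, building on the operators supplied by Theorems~\ref{thm: knappstein} and \ref{thm: norminterwiner}. Fix a reference minimal $K$-type $\mu_0 \in \minK{\sigma}$ (used to normalize intertwiners) and, for each $w \in W_\sigma$, a representative $\bar w \in N_K(\fa_0)$ together with a unitary isomorphism $w\sigma \cong \sigma$. Left translation by $\bar w$ realizes an equivalence $(\pi^P_{\sigma,\lambda}, I_\sigma) \cong (\pi^{wP}_{\sigma, w\lambda}, I_\sigma)$; composing with $\cA(P, wP, \sigma, w\lambda)$ produces a family $\cC(w, \lambda) \in \End(I_\sigma)$, meromorphic in $\lambda$ and rational on each $p_F I_\sigma$, which intertwines $\pi^P_{\sigma,\lambda}$ with $\pi^P_{\sigma, w\lambda}$, is unitary for $\lambda \in i\fa_0^*$, and acts on $p_{\mu_0}I_\sigma$ by a scalar independent of $\lambda$. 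The cocycle relations of Theorem~\ref{thm: norminterwiner} give $\cC(w_1, w_2\lambda)\,\cC(w_2, \lambda) = \eta(w_1, w_2)\,\cC(w_1 w_2, \lambda)$ for a scalar $2$-cocycle $\eta$; one checks $\eta$ is a coboundary, so after rescaling we may treat $w \mapsto \cC(w, 0)$ as a genuine representation of $W_\sigma$ on $I_\sigma$ commuting with $\pi^P_{\sigma, 0}$.

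First I would define $W^0_\sigma$ and establish the semidirect product. Let $W^0_\sigma$ be the subgroup of $W_\sigma$ generated by the reflections $s_\alpha$ ($\alpha$ a reduced root, $s_\alpha \in W_\sigma$) for which $\cA(s_\alpha P, P, \sigma, 0)$ is scalar on all of $I_\sigma$, equivalently the rank-one induced representation attached to $\alpha$ is irreducible at the origin. A Knapp--Stein argument identifies $W^0_\sigma$ with $\{w \in W_\sigma : \cC(w,0) \in \bC\cdot\Id\}$, shows the roots $\Delta_\sigma$ involved form a subsystem permuted by $W_\sigma$, and hence that $W^0_\sigma$ is a normal reflection subgroup with a distinguished positive system $\Delta^+_\sigma$. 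Setting $R_\sigma = \{w \in W_\sigma : w\Delta^+_\sigma = \Delta^+_\sigma\}$, the standard fact that $W^0_\sigma$ acts simply transitively on the chambers of $\Delta_\sigma$ with $R_\sigma$ the stabilizer of the dominant one gives $W_\sigma = R_\sigma \ltimes W^0_\sigma$. Showing that $R_\sigma$ is a product of copies of $\bZ/2\bZ$ is the point requiring genuine input: one goes through the rank-one normalized intertwiners $\cA(s_\alpha P, P, \sigma, 0)$ using the classification of real rank-one groups and of the discrete series of their $M$, to see that each generator of $R_\sigma$ may be chosen of order two and that the $\cC(w,0)$ for $w \in R_\sigma$ pairwise commute. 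This is Vogan's computation and it is special to real groups.

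Next I would identify the commuting algebra and extract the $\widehat{R}_\sigma$-action. Knapp--Stein theory gives $\cC(w,0)$ scalar exactly for $w \in W^0_\sigma$ and $\{\cC(w,0) : w \in R_\sigma\}$ a basis of $\End_G(\pi^P_{\sigma,0})$, so via the previous step $\End_G(\pi^P_{\sigma,0}) \cong \bC[R_\sigma]$. Since $\pi^P_{\sigma,0}$ is unitary, hence semisimple, its irreducible constituents are in bijection with $\widehat{R}_\sigma$; by Theorem~\ref{thm: voganclassification} these constituents are the $J^P_{\sigma,0}[\mu]$ with $\mu$ running over $\minK{\sigma}$, each occurring once (the multiplicity-one theorem). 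Transporting the translation action of $\widehat{R}_\sigma$ on itself across these two labellings produces the asserted simply transitive action of $\widehat{R}_\sigma$ on $\minK{\sigma}$.

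Finally, for general $\lambda \in \overline{\fa^*_{P,+}}$ I would rerun the analysis with those $\cC(w, \lambda)$ that remain regular: elements $w$ with $w\lambda = \lambda$ contribute to $\End_G(\pi^P_{\sigma,\lambda})$, while an element $w$ with $w\lambda \in W^0_\sigma\lambda$ supplies an intertwiner $\pi^P_{\sigma,\lambda} \to \pi^{wP}_{\sigma, w\lambda}$ which, via the last clause of Theorem~\ref{thm: voganclassification} (equality $J^P_{\sigma,\lambda}[\mu] = J^Q_{\sigma,\lambda}[\mu]$ on a common closed chamber), identifies the corresponding subquotients; the net effect is that $J^P_{\sigma,\lambda}[\mu] \cong J^P_{\sigma,\lambda}[\nu]$ iff $\mu$ and $\nu$ lie in one orbit of $R_\sigma(\lambda) = \{w \in R_\sigma : w\lambda \in W^0_\sigma\lambda\}$ acting through its dual $\widehat{R}_\sigma(\lambda)$. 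Uniqueness of $W^0_\sigma$ and of the action from these properties is \cite[Theorem 1(v)]{delormeHC}. I expect the two genuinely hard points to be: (i) that $R_\sigma$ is elementary abelian of exponent two, and (ii) making the general-$\lambda$ argument precise at non-generic, nonzero $\lambda$, where one must control the poles of the unnormalized operators $A(Q,P,\sigma,\lambda)$ and the precise breakdown of semisimplicity --- exactly the places where one leans on Knapp--Stein and Delorme rather than on the formal properties recalled in the excerpt.
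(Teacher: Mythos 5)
The paper does not prove this theorem: it is imported from the literature, with the sentence immediately following it reading ``For the existence of these groups, see [Vogan, Lemma 4.3.14 and Theorem 4.4.8] (using [Notation 6.6.3 and Theorem 6.6.15]),'' together with citations to Knapp--Stein and to Delorme \cite{delormeHC}. Your sketch is a faithful high-level outline of exactly the argument those references carry out --- normalized intertwiners, rank-one reducibility defining $W^0_\sigma$, $R_\sigma$ as the stabilizer of a positive system, triviality of the Knapp--Stein $2$-cocycle giving $\End_G(\pi^P_{\sigma,0}) \cong \bC[R_\sigma]$, and matching the $\widehat{R}_\sigma$-isotypes with the $J^P_{\sigma,0}[\mu]$ via Vogan's classification --- and you correctly flag the two points (elementary-abelianness of $R_\sigma$ and the general-$\lambda$ bookkeeping) where one genuinely has to lean on those references rather than on formal properties. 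Since the paper supplies no proof to compare against, there is nothing further to reconcile; your account is consistent with what the cited sources prove.
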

For the existence of these groups, see \cite[Lemma 4.3.14 and Theorem 4.4.8]{voganbook} (using \cite[Notation 6.6.3 and Theorem 6.6.15]{voganbook}).
\begin{defn}
	The subgroup $R_\sigma$ is known as the \textit{$R$-group}. We also characterize $W_\sigma^0$ by the (Knapp-Stein \cite{knappstein}) property that $\cA(P,w,\sigma,0)$ is the identity on $I_{\sigma}$ (see \cite[Theorem 1 (v)]{delormeHC}).
\end{defn}
Given $w \in W(A)$, define the map
\[T(w): I_\sigma \to I_{w\cdot\sigma}\]
by $(T(w)\varphi)(k) = \varphi(kw)$, which intertwines $\pi^P_{\sigma,\lambda}$ and $\pi^{wPw^{-1}}_{w\cdot \sigma, w\lambda}$. Now define
\[\cA(P,w, \lambda) = \cA(P,w,\sigma,\lambda) = T(w) \cA(w^{-1}Pw, P, \sigma,\lambda).\]
As before, $\cA(P,w, \lambda)$ acts as a scalar, independent of $\lambda$, on minimal $K$-types.
\begin{defn}\label{defn: hatr mu nu}
	Given $\mu \in \minK{\sigma}$, we write $a^\mu(w)$ to denote the (nonzero) scalar which defines the action of $\cA(P, w, \lambda)$ on $p_{\mu} I_{\sigma}$ (which is independent of $\lambda$). Given $\mu,\nu \in \minK{\sigma}$, the map
	\[w \mapsto a^\mu(w) (a^{\nu}(w))^{-1} : W_\sigma \to \bC^\times\]
	is a character of $W_\sigma$, trivial on $W_\sigma^0$, hence a character of $R_\sigma$ which we denote by $\hat{r}_{\mu\nu}$. 
\end{defn}
\begin{thm}[See {\cite[Theorem 1 (iv)]{delormeHC}}]\label{thm: hatr mu nu}
	For each $\mu, \nu \in \minK{\sigma}$, $\hat{r}_{\mu\nu}$ of Definition \ref{defn: hatr mu nu} is the unique element of $\widehat{R}_\sigma$ such that $\hat{r}_{\mu\nu} \cdot \nu = \mu$ under the action of $\widehat{R}_\sigma$ on $A(\sigma)$ given in Theorem \ref{thm: WandRGroups}.
\end{thm}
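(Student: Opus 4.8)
The plan is to show that the family $\{\hat{r}_{\mu\nu}\}$ \emph{is}, written multiplicatively, the simply transitive action of Theorem \ref{thm: WandRGroups}; the assertion then follows at once, since $\hat{r}_{\mu\nu}$ is by construction the unique character carrying $\nu$ to $\mu$. First I would record the cocycle identities $\hat{r}_{\mu\mu}=1$, $\hat{r}_{\mu\nu}\hat{r}_{\nu\rho}=\hat{r}_{\mu\rho}$ and $\hat{r}_{\nu\mu}=\hat{r}_{\mu\nu}^{-1}$, which are immediate from Definition \ref{defn: hatr mu nu} because $\tfrac{a^\mu(w)}{a^\nu(w)}\cdot\tfrac{a^\nu(w)}{a^\rho(w)}=\tfrac{a^\mu(w)}{a^\rho(w)}$. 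Granting these, as soon as one knows that $\mu\mapsto\hat{r}_{\mu\nu}$ is a bijection $\minK{\sigma}\to\widehat{R}_\sigma$ for each fixed $\nu$, the rule ``$\hat{r}\cdot\nu:=$ the unique $\mu$ with $\hat{r}_{\mu\nu}=\hat{r}$'' is a well-defined simply transitive action of $\widehat{R}_\sigma$ on $\minK{\sigma}$.

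So the first real step is the bijectivity of $\mu\mapsto\hat{r}_{\mu\nu}$. Since $\widehat{R}_\sigma$ already acts simply transitively on $\minK{\sigma}$ by Theorem \ref{thm: WandRGroups}, the two finite sets have equal cardinality, and it suffices to prove injectivity, i.e.\ that the functions $w\mapsto a^\mu(w)$ on $R_\sigma$ are pairwise distinct. Here I would specialize to $\lambda=0$. Then $\pi^P_{\sigma,0}$ is unitary, hence $I^P_{\sigma,0}$ is a finite direct sum of irreducibles; it is multiplicity-free because every constituent contains a minimal $K$-type (Theorem \ref{thm: voganclassification}), which has multiplicity one in $I_\sigma$. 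By Knapp-Stein theory $\dim\End_G(I^P_{\sigma,0})=|R_\sigma|=|\minK{\sigma}|$, so there are $|\minK{\sigma}|$ constituents, and since each of the $|\minK{\sigma}|$ minimal $K$-types lies in a unique constituent, the constituents are in bijection with $\minK{\sigma}$; the constituent matched to $\mu$ is the irreducible subquotient containing $\mu$, which is $J^P_{\sigma,0}[\mu]$. Thus $I^P_{\sigma,0}=\bigoplus_{\mu\in\minK{\sigma}}J^P_{\sigma,0}[\mu]$ with pairwise non-isomorphic summands, $\End_G(I^P_{\sigma,0})=\bigoplus_\mu\bC e_\mu$ with $e_\mu$ the projection onto $J^P_{\sigma,0}[\mu]$, and the normalized intertwiners $\cA(P,w,0)$, $w\in R_\sigma$, form a basis of this commutant (Knapp-Stein). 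Since the summands are non-isomorphic, $\cA(P,w,0)$ preserves each and acts on $J^P_{\sigma,0}[\mu]$ by the scalar $a^\mu(w)$ (its value on $p_\mu I_\sigma$), so $\cA(P,w,0)=\sum_\mu a^\mu(w)\,e_\mu$; as these span $\bigoplus_\mu\bC e_\mu$, the square matrix $\bigl(a^\mu(w)\bigr)_{\mu\in\minK{\sigma},\,w\in R_\sigma}$ is invertible and its rows are distinct. That is the required injectivity.

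It remains to identify the action just built with the one in Theorem \ref{thm: WandRGroups}, and here I would appeal to that theorem's uniqueness clause: it is enough to verify the last of its three properties for our action, namely that for all $\lambda\in\overline{\fa^*_{P,+}}$ and $\mu,\nu\in\minK{\sigma}$,
\[J^P_{\sigma,\lambda}[\mu]\cong J^P_{\sigma,\lambda}[\nu]\quad\Longleftrightarrow\quad a^\mu(w)=a^\nu(w)\ \text{ for all }w\in R_\sigma\text{ with }w\lambda\in W_\sigma^0\lambda,\]
the right-hand side being precisely the condition $\hat{r}_{\mu\nu}\in\widehat{R}_\sigma(\lambda)$. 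For $\lambda=0$ this is exactly the multiplicity-free decomposition above. For general $\lambda$ the plan is to rerun that analysis with the parameter present: the isomorphisms among the Langlands quotients $J^P_{\sigma,\lambda}[\mu]$ should be realized by composites of the operators $T(w)$ and normalized Knapp-Stein intertwiners for $w\in W_\sigma$ with $w\lambda\in W_\sigma^0\lambda$ (exploiting $\cA(P,w,0)=\Id$ for $w\in W_\sigma^0$, the rationality of $\cA(Q,P,\lambda)$ on $p_F I_\sigma$ from Theorem \ref{thm: norminterwiner}, and the compatibility $J^P_{\sigma,\lambda}[\mu]=J^Q_{\sigma,\lambda}[\mu]$ of Theorem \ref{thm: voganclassification}), and evaluating these composites on the lines $p_\mu I_\sigma$ should identify the induced permutation of $\minK{\sigma}$ with multiplication by $\hat{r}_{\mu\nu}$. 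The hard part will be this last step: it is in essence \cite[Theorem 1]{delormeHC}, and carrying it out from scratch requires the detailed Knapp-Stein analysis of the reducibility and submodule structure of $\pi^P_{\sigma,\lambda}$ at real (non-unitary) $\lambda$, whereas the cocycle bookkeeping and the $\lambda=0$ computation are comparatively routine.
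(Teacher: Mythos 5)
The paper does not prove this theorem: it is cited directly from Delorme \cite[Theorem 1 (iv)]{delormeHC}, and no proof appears in the text. So there is no internal argument to compare your proposal against; the question is whether your sketch is self-contained, and you have been candid that it is not.

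Your preliminary steps are sound. The cocycle identities for $\hat{r}_{\mu\nu}$ are immediate from Definition \ref{defn: hatr mu nu}, and your $\lambda=0$ computation is correct: $\pi^P_{\sigma,0}$ is unitary, hence completely reducible; each constituent contains a minimal $K$-type (Theorem \ref{thm: voganclassification}), each of which has multiplicity one in $I_\sigma$, so the decomposition is multiplicity-free; $\dim\End_G(I^P_{\sigma,0})=|R_\sigma|=|\minK{\sigma}|$ forces exactly $|\minK{\sigma}|$ pairwise non-isomorphic summands, matched to minimal $K$-types; and since the $\cA(P,w,0)$ with $w\in R_\sigma$ span the commutant, the matrix $\bigl(a^\mu(w)\bigr)$ is invertible, giving the required injectivity of $\mu\mapsto\hat{r}_{\mu\nu}$. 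This correctly produces a simply transitive action of $\widehat{R}_\sigma$ on $\minK{\sigma}$ via $\hat{r}\cdot\nu=\mu$ whenever $\hat{r}_{\mu\nu}=\hat{r}$.

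The gap is the final identification with the action of Theorem \ref{thm: WandRGroups}, and it is genuine. The uniqueness clause there pins down the action by the third bullet, which is an iff-condition at every $\lambda\in\overline{\fa^*_{P,+}}$; the $\lambda=0$ case is degenerate (since $w\cdot 0\in W_\sigma^0\cdot 0$ for all $w$, one has $\widehat{R}_\sigma(0)=\{1\}$, and the condition collapses to "$\mu=\nu$"), so it cannot distinguish between candidate actions. Verifying the bullet for general $\lambda$ requires the full Knapp--Stein/Vogan analysis of the submodule structure of $\pi^P_{\sigma,\lambda}$ at nonzero real part, and, as you say yourself, "it is in essence \cite[Theorem 1]{delormeHC}." In other words, your proposal reduces the statement to the very result the paper is citing, rather than reproving it. That is an honest reduction, but it is not a proof. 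If you want a self-contained argument, the $\lambda=0$ work and the cocycle bookkeeping are the easy part; the real labor is carrying out the Langlands-quotient isomorphism analysis for general $\lambda$ that your last paragraph only gestures at, and that is precisely what Delorme does.
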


\subsection{Polynomial division of rapidly decreasing functions}\label{sec: polynomials}
We will need some lemmas regarding polynomial division of rapidly decreasing functions. The following lemmas are analogues of \cite[Lemma B.1, Theorem B.1]{delormeclozel2}, and \cite[Lemmas 7,8]{delormeclozel}.

We begin with a lemma due to Ehrenpreis \cite[Theorem 1.4]{ehrenpreisSCV}.
\begin{lem}[{\cite[Lemma B.1 (i)]{delormeclozel2}}]\label{lem: ehrenlemma}
	Given a complex polynomial $p$ on $\bC^n$, there exist constants $c, m \geq 0$ such that if $h$ is holomorphic on the closed polydisc $\Delta_\rho(z_0) = \{z \in \bC^n: \|z - z_0\| \leq \rho\}$ (where $\|z\| = \sup_{j} |z_j|$), then
	\[|h(z_0)| \leq c \rho^{-m}\sup_{z \in \Delta_\rho(z_0)}|p(z) h(z)|.\]
\end{lem}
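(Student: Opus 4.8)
\emph{Proof plan.} The plan is to reduce to the one-variable case by restricting $h$ to a well-chosen complex line through $z_0$, and then to run a Jensen-type estimate. Let $d = \deg p$ and let $p_d$ be the degree-$d$ homogeneous part of $p$, a nonzero polynomial. Its zero set is a proper algebraic subvariety of $\bC^n$, so I would first fix, once and for all, a vector $v \in \bC^n$ with $\|v\| \le 1$ and $p_d(v) \neq 0$. For any $z_0$ the one-variable polynomial $P_{z_0}(t) := p(z_0 + tv)$ then has degree exactly $d$, with leading coefficient $p_d(v)$ \emph{independent of $z_0$}. Given $h$ holomorphic on $\Delta_\rho(z_0)$, the function $H(t) := h(z_0 + tv)$ is holomorphic on $\{|t| \le \rho\}$ (since $\|tv\| = |t|\,\|v\| \le \rho$ there), satisfies $H(0) = h(z_0)$, and $|P_{z_0}(t)H(t)| \le \sup_{\Delta_\rho(z_0)}|ph|$ for $|t| \le \rho$. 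So the whole statement reduces to the one-variable claim: for a polynomial $P$ of one variable of degree $d$ with leading coefficient $a \ne 0$ and $H$ holomorphic on $|t| \le \rho$,
\[ |H(0)| \le |a|^{-1}\,\rho^{-d}\,\sup_{|t| \le \rho}|P(t)H(t)|. \]

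For the one-variable estimate I would argue as follows. We may assume $H(0) \ne 0$, hence $g := PH \not\equiv 0$. Shrinking $\rho$ slightly if necessary (and letting it return to $\rho$ at the end), we may treat $H$ as holomorphic on a neighborhood of the closed disc. By Jensen's inequality (subharmonicity of $\log|H|$), and since $P$ has finitely many zeros on the circle $|t| = \rho$,
\[ \log|H(0)| \le \frac{1}{2\pi}\int_0^{2\pi}\log|H(\rho e^{i\theta})|\,d\theta = \frac{1}{2\pi}\int_0^{2\pi}\log|g(\rho e^{i\theta})|\,d\theta - \frac{1}{2\pi}\int_0^{2\pi}\log|P(\rho e^{i\theta})|\,d\theta. \]
The first term is $\le \log\sup_{|t|=\rho}|g|$. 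For the second, writing $P(t) = a\prod_{j=1}^{d}(t-\alpha_j)$ and using the classical identity $\tfrac{1}{2\pi}\int_0^{2\pi}\log|\rho e^{i\theta}-\alpha|\,d\theta = \log\max(\rho,|\alpha|)$, I get
\[ \frac{1}{2\pi}\int_0^{2\pi}\log|P(\rho e^{i\theta})|\,d\theta = \log|a| + \sum_{j=1}^{d}\log\max(\rho,|\alpha_j|) \ge \log|a| + d\log\rho. \]
Combining these and using the maximum principle ($\sup_{|t|=\rho}|g| = \sup_{|t|\le\rho}|g|$) gives exactly the one-variable claim.

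Tracing constants back through Step 1, the lemma holds with $c = |p_d(v)|^{-1}$ and $m = \deg p$, both depending only on $p$. I expect the only genuinely delicate point to be precisely this uniformity: $c$ and $m$ must not depend on $z_0$, $\rho$, or $h$. The passage to a fixed line direction $v$ is what secures it, since it pins the leading coefficient to the fixed nonzero number $p_d(v)$; and the potential instability from zeros of $p$ clustering near $z_0$ — which is what makes naive ``divide by $p$'' arguments fail — does no harm here, because such zeros lie in the disc $|t| < \rho$ and so only replace $\max(\rho,|\alpha_j|)$ by something larger than $\rho$, which only strengthens the estimate.
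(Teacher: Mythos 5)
The paper does not supply a proof of this lemma — it cites Clozel--Delorme (Lemma B.1(i)), and ultimately Ehrenpreis — so there is no in-paper argument to compare against; I can only assess your proposal on its own merits. Your proof is correct, and the two points that actually need care are handled: fixing once and for all a direction $v$ with $\|v\|\le 1$ and $p_d(v)\ne 0$ makes $P_{z_0}(t)=p(z_0+tv)$ a one-variable polynomial of degree exactly $d$ with leading coefficient $p_d(v)$ independent of $z_0$, which is what secures uniformity of $c$ and $m$; and in the identity $\tfrac{1}{2\pi}\int_0^{2\pi}\log|\rho e^{i\theta}-\alpha_j|\,d\theta=\log\max(\rho,|\alpha_j|)$, roots $\alpha_j$ inside the disc only strengthen the inequality, so clustering of zeros of $p$ near $z_0$ causes no loss. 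The resulting constants $c=|p_d(v)|^{-1}$ and $m=\deg p$ are exactly as required. This restriction-to-a-generic-complex-line plus Jensen computation is the classical argument and is what the cited sources do, so your proof matches them in substance.
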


We say that a nonzero polynomial $p$ \textit{divides} a holomorphic function $f$ if $f/p$ extends to a holomorphic function.
\begin{lem}\label{lem: polydivision}
	Let $V_0$ be a Euclidean vector space with complexification $V$.
	Let $f \in \PW(V)$ and $p \in \bC[V]$. If $p$ divides $f$, then $f/p \in \PW(V)$.
\end{lem}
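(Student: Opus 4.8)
The plan is to reduce the whole statement to the Ehrenpreis-type estimate of Lemma \ref{lem: ehrenlemma}. Since $p$ is nonzero and divides $f$, the quotient $g := f/p$ is by hypothesis holomorphic on all of $V$, so the only thing left to verify is that $g$ satisfies the Paley-Wiener growth bounds, i.e.\ that $\sup_{|\Re\lambda|\le k}(1+|\lambda|)^N|g(\lambda)| < \infty$ for every $N,k\in\bN$.

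First I would fix an orthonormal basis of $V_0$, identifying $V$ with $\bC^n$ so that $p$ becomes an honest polynomial in $n$ complex variables and the sup-norm $\|z\|=\sup_j|z_j|$ appearing in Lemma \ref{lem: ehrenlemma} is comparable to the norm $|\cdot|$ coming from \eqref{eq: innerproduct}; fix a constant $C$ with $|\Re(z-z_0)|\le C\|z-z_0\|$ and $|z-z_0|\le C\|z-z_0\|$ for all $z,z_0$. Next, apply Lemma \ref{lem: ehrenlemma} to $p$ and to the holomorphic function $g$ with radius $\rho=1$: there are constants $c,m\ge 0$, depending only on $p$, with
\[|g(z_0)| \le c \sup_{\|z-z_0\|\le 1}|p(z)g(z)| = c\sup_{\|z-z_0\|\le 1}|f(z)| \qquad\text{for every } z_0\in V.\]

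Now fix $N,k\in\bN$. For $z_0$ in the strip $|\Re z_0|\le k$ and any $z$ with $\|z-z_0\|\le 1$ one has $|\Re z|\le k+C =: k'$, so by the Paley-Wiener bound for $f$ with exponent $N$ and strip width $k'$ there is a constant $M=M_{N,k'}<\infty$ with $|f(z)|\le M(1+|z|)^{-N}$; and since $|z|\ge|z_0|-C$, one gets $1+|z|\ge\tfrac12(1+|z_0|)$ once $|z_0|$ is large enough. Combining these, $(1+|z_0|)^N|g(z_0)|$ is bounded for $|z_0|$ large in the strip, and it is bounded on the remaining (compact) part of the strip because $g$ is continuous there. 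As $N$ and $k$ were arbitrary, $g\in\PW(V)$, which is the claim.

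I do not expect a genuine obstacle here: this is essentially a one-step application of Lemma \ref{lem: ehrenlemma}, and the only points needing a little care are the equivalence of the two norms on the finite-dimensional space $V$ and the elementary estimate $1+|z|\ge\tfrac12(1+|z_0|)$ for $\|z-z_0\|\le 1$ and $|z_0|$ large. The same argument, applied coordinate-wise, also handles the version with values in a finite-dimensional normed space, which is the form needed elsewhere (cf.\ Lemma \ref{lem: fourier transform S(G)}).
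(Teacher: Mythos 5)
Your proposal is correct and takes essentially the same route as the paper's proof: both apply Lemma~\ref{lem: ehrenlemma} with $\rho=1$ and then transfer the Paley--Wiener decay of $f$ on a slightly wider strip to the quotient $g=f/p$. The paper's version is marginally tighter (it uses the one-line inequalities $1+\|z\|\le 2(1+\|z'\|)$ and $\|\Re z'\|\le\|\Re z\|+1$ for $z'\in\Delta_1(z)$ and avoids the case split into large versus compact $|z_0|$), but the substance is identical.
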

\begin{proof}
	Compare to \cite[Lemma B.1]{delormeclozel2}. We identify $V$ with $\bC^n$ via the inner product.	Set $h = f/p$, which extends to an entire function on $\bC^n$. Setting $\rho = 1$ in Lemma \ref{lem: ehrenlemma}, for each $N$ and $k \geq 0$,
	\begin{align*}
		\sup_{\|\Re z\| \leq k} (1+\|z\|)^N|h(z)| \leq c \sup_{\|\Re z\| \leq k} (1+\|z\|)^N  \sup_{z' \in \Delta_1(z)}|f(z')|.
	\end{align*}
	When $z' \in \Delta_1(z)$, then $1 + \|z\| \leq 2(1+\|z'\|)$ and
	$\|\Re z'\| \leq \|\Re z\| + 1$ by the triangle inequality. Therefore,
	\begin{align}\label{eq: polydiv estimate}
		\sup_{\|\Re z\| \leq k} (1+\|z\|)^N|h(z)| &\leq 2^N c \sup_{\|\Re z\| \leq k + 1}  (1+\|z\|)^N |f(z)| < \infty.
	\end{align}
	This proves that $h \in \PW(V)$.
\end{proof}

\begin{thm} \label{thm: Rais Theorem}
	Fix a Euclidean vector space $V_0$ with complexification $V$, and a finite group $W$ generated by reflections on $V_0$. There exist homogeneous complex polynomials $p_i \in \bC[V]$ such that
	\[\PW(V) = \sum_i p_i \PW(V)^W\]
	Moreover, the above sum is free in the sense that the decomposition $f = \sum p_i f_i$ (where $f \in \PW(V), f_i \in \PW(V)^W$) is unique.
\end{thm}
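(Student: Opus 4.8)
The plan is to reduce Theorem \ref{thm: Rais Theorem} to the classical structure theory of the coinvariant algebra of a finite reflection group, combined with the polynomial-division result of Lemma \ref{lem: polydivision}. Write $R = \bC[V]$ for the polynomial ring, $R^W$ for the invariants, and recall the Chevalley--Shephard--Todd theorem: since $W$ is a reflection group, $R$ is a free module over $R^W$ of rank $|W|$, and one can choose a basis consisting of homogeneous polynomials $p_1, \ldots, p_{|W|}$ (for instance, a lift of a homogeneous basis of the coinvariant algebra $R/(R^W_+)$). Thus every $q \in R$ has a unique expression $q = \sum_i p_i q_i$ with $q_i \in R^W$ homogeneous-compatibly chosen. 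The substance of the theorem is to upgrade this algebraic fact from polynomials to the Paley--Wiener space.

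First I would establish the inclusion $\sum_i p_i \PW(V)^W \subseteq \PW(V)$, which is immediate since $\PW(V)$ is an algebra (indeed a module over $\bC[V]$: multiplying a Paley--Wiener function by a polynomial preserves holomorphy and only improves nothing but still keeps the rapid-decay estimates, because a polynomial grows at most polynomially and the decay is faster than any polynomial). The harder direction is $\PW(V) \subseteq \sum_i p_i \PW(V)^W$. Given $f \in \PW(V)$, I would use the Reynolds operator / averaging trick: the point is to solve the linear system $f = \sum_i p_i f_i$ for $f_i$. Because $\{p_i\}$ is an $R^W$-basis of $R$, there is a "dual" construction: one can find polynomials (or rational functions with denominator the Jacobian $J = \prod_{\text{reflections}} \alpha_H$, or its square) expressing each $f_i$ as an $R^W$-linear combination of the Galois-type conjugates $\{ w \cdot f : w \in W \}$ of $f$. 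Concretely, the matrix $M = (w \cdot p_i)_{w \in W, i}$ has determinant a nonzero multiple of a power of $J$ (this is the standard fact underlying freeness), so Cramer's rule gives $f_i = \det(M)^{-1} \sum_{w} C_{i,w}\, (w \cdot f)$ where $C_{i,w} \in R$ are cofactors. Each $w \cdot f$ lies in $\PW(V)$ since $W$ acts on $V_0$ by isometries (so the seminorms $\|\cdot\|_{\PW(V),N,k}$ are $W$-invariant), hence $\sum_w C_{i,w}(w\cdot f) \in \PW(V)$, and by construction this sum is divisible by $\det(M)$ as a holomorphic function; Lemma \ref{lem: polydivision} then gives $f_i = \bigl(\sum_w C_{i,w}(w\cdot f)\bigr)/\det(M) \in \PW(V)$. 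Finally $f_i$ is $W$-invariant: this follows because the $f_i$ defined by the polynomial identity are forced to be invariant (apply $w$ to both sides of $q = \sum p_i q_i$ and use uniqueness in the polynomial case, or observe directly from the Cramer formula that the $\det(M)$-normalization makes the conjugation action trivial on $f_i$).

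The uniqueness (freeness) assertion then transfers for free: if $\sum_i p_i f_i = 0$ with $f_i \in \PW(V)^W$, then evaluating at any fixed point $v \in V$ and running over all $v$ would not immediately help, but instead one argues that the same Cramer-rule formula recovers each $f_i$ from the sum $\sum_i p_i f_i$ (since the $p_i$ form an $R^W$-basis, the recovery is purely formal and does not depend on whether the $f_i$ are polynomials), so $\sum p_i f_i = 0 \Rightarrow f_i = 0$ for all $i$. Equivalently: $\PW(V)^W$ is a module over $R^W$ and $\PW(V) = \PW(V)^W \otimes_{R^W} R$ by the Cramer-rule argument, and the tensor product with a free module preserves linear independence of the basis.

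The main obstacle I anticipate is making the Cramer-rule / cofactor bookkeeping clean — specifically verifying that $\det(M)$ (the determinant of the $|W| \times |W|$ matrix of conjugates of the $p_i$) is a nonzero polynomial that divides each entry of $M^{-1}$-times-$f$-vector in the holomorphic sense, so that Lemma \ref{lem: polydivision} applies. This is essentially the content of the Chevalley theorem's proof (the matrix is invertible over the fraction field $\operatorname{Frac}(R^W)$, with determinant a power of the Jacobian up to a nonzero scalar), so the work is in citing or re-deriving that determinant computation rather than in any genuinely new difficulty; everything else is routine once one knows the $W$-action is by isometries and hence preserves the Paley--Wiener seminorms.
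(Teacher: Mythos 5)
Your overall strategy matches the paper's: choose a homogeneous free basis $p_1,\ldots,p_{|W|}$ of $\bC[V]$ over $\bC[V]^W$, set up the $|W|\times|W|$ linear system obtained by evaluating $f=\sum p_i f_i$ at all conjugates $w\lambda$, apply Cramer's rule to solve for the $f_i$, and then use Lemma \ref{lem: polydivision} to conclude the quotients land in $\PW(V)$. But there is a genuine gap at exactly the point you flag as ``the main obstacle'': you assert that the Cramer numerator $\sum_w C_{i,w}(\lambda)\,f(w\lambda)$ is ``by construction divisible by $\det(M)$ as a holomorphic function,'' and later say this is ``essentially the content of the Chevalley theorem's proof.'' That is not right. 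Chevalley/Shephard--Todd (or Harish-Chandra's Lemma 8, which the paper cites) gives divisibility only when $f$ is a polynomial, because then the $f_i$ are known in advance to be polynomials by freeness. For a general entire $f\in\PW(V)$, the $f_i$ defined via Cramer are only \emph{a priori} meromorphic with possible poles along the zero set of $D(\lambda)=\det(p_i(w_j\lambda))$, and nothing in the polynomial theory tells you directly that these poles cancel. Proving that $D$ divides each numerator holomorphically is precisely the nontrivial analytic content of the theorem, not bookkeeping.

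The paper closes this gap with an approximation argument that your proposal omits. It replaces $f$ by its truncated Taylor polynomials $q_n$, uses polynomial freeness to write $q_n=\sum_i p_i f_{n,i}$ with each $f_{n,i}$ an honest ($W$-invariant) polynomial, and sets $g_{n,i}=D\,f_{n,i}$. One shows $g_{n,i}\to g_i$ uniformly on compacta, and then invokes the quantitative estimate \eqref{eq: polydiv estimate} from the proof of Lemma \ref{lem: polydivision} (applied with $f=g_{n,i}$, $h=f_{n,i}$, $p=D$) to get uniform control on the $f_{n,i}$; this forces the $f_{n,i}$ to converge locally uniformly to a holomorphic limit $f_i$ satisfying $D f_i=g_i$, which is what establishes the divisibility. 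Only then does Lemma \ref{lem: polydivision} yield $f_i\in\PW(V)$. Without this step your argument does not go through: you need some mechanism to transfer the polynomial divisibility to holomorphic divisibility, and ``the determinant computation'' from the reflection-group theory does not, by itself, provide one. The rest of your proposal (the $W$ acts by isometries so preserves the Paley--Wiener seminorms, the uniqueness follows by recovering $f_i$ from the Cramer formula) is fine and matches the paper once the divisibility is in hand.
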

This analogue of this theorem for compactly supported smooth functions is due to Rais \cite{rais}. See also \cite[Theorem B.1]{delormeclozel2}; the proof of Theorem \ref{thm: Rais Theorem} is almost identical, and so we shall not provide complete details.
\begin{proof}[Proof sketch]
	According to \cite[Lemma 8]{HCinvariants}, $\bC[V]$ is free over $\bC[V]^W$ with homogeneous basis $p_1, \ldots, p_{|W|} \in \bC[V]$. 
	
	Write $W = \{w_1, \ldots, w_{|W|}\}$. Given $f \in \PW(V)$, for each $\lambda \in \bC$ we obtain a $|W| \times |W|$ linear system
	\[f(w_j\lambda) = \sum_{i=1}^{|W|} p_i(w_j\lambda) f_i(\lambda), \quad j = 1, \ldots, |W|.\]
	If $D(\lambda) = \det([p_i(w_j \lambda)])$ denotes the determinant of this system, then $D(\lambda)$ is a polynomial which is nonzero because of the polynomial case above. By Cramer's rule, we can find (unique) functions $g_i(\lambda) \in \PW(V)$ such that
	\[D(\lambda )f(w_j\lambda) = \sum_{i} p_i(w_j\lambda)g_i(\lambda).\]
	Now, approximating $f$ by truncated Taylor series polynomials $q_n$, there are unique $W$-invariant polynomials $f_{n,i}$ such that $q_n = \sum p_i f_{n,i}$. Moreover, $g_{n,i}(\lambda) = D(\lambda) f_{n,i}(\lambda)$ converge to $g_i$ uniformly on compact sets as $n \to \infty$. The estimate \eqref{eq: polydiv estimate} (applied to $f = g_{n,i}$ and $F = f_{n,i}$) is then used to prove that $D$ divides each $g_i$, and that $f_{n,i}$ converge to $f_i=g_i/D$ uniformly on compact sets. By Lemma \ref{lem: polydivision}, we have $f_i \in \PW(V)^W$ . Uniqueness of the $f_i$ follows from uniqueness of $g_i$.
\end{proof}

Let $P = MAN$ be a cuspidal parabolic subgroup. Let $T \subset M$ be a $\theta$-stable Cartan subgroup contained in $K$, with Lie algebra $\ft_0$. We set $\fh = \fa + \ft$. We need a particular action of the restricted Weyl group $W(\fg_0, \fa_0)$ on $i\ft_0^*$ due to Knapp. Its important property is its relationship to the action of $W(\fg_0, \fa_0)$ on $\widehat{M}_d$.
\begin{thm}[{\cite[Theorem 3.7, Theorem 4.10]{knappcommutativity}}]\label{thm: knappweylaction}
	Let $M_0$ denote the connected component of $M$ at the identity. There exists an action of $W(\fg_0, \fa_0)$ on $i\ft^*_0$ such that, given $\sigma \in \widehat{M}_d$ and Harish-Chandra parameter $\Lambda_\sigma$ of $\sigma\vert_{M_0}$ (recall Definition \ref{defn: HC parameter of sigma}), if $(w\cdot \sigma)\vert_{M_0}  \cong \sigma\vert_{M_0}$ then $w \cdot \Lambda_\sigma = \Lambda_\sigma$. For each $w \in W(\fg_0, \fa_0)$, there exists a representative $k \in N_K(\fa_0)$ of $w$ such that $w$ acts by $\Ad_k$ on $i\ft_0$.
\end{thm}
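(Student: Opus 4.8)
The plan is to realize the action on $i\ft_0^*$ by conjugation by carefully chosen Weyl group representatives, and then to read off the invariance of $\Lambda_\sigma$ from the classification of discrete series of $M_0$ by their Harish--Chandra parameters; since the statement is quoted from Knapp, I would aim only to recover its shape and defer the hardest structural point to \cite{knappcommutativity}. First I would observe that, because $MA = Z_G(\fa_0)$, the group $N_K(\fa_0)$ normalizes $MA$, hence $M$ and $M_0$, and that the induced conjugation action on $\widehat{M}_d$ is exactly the one appearing in the text. Given $w \in W(\fg_0,\fa_0)$ and a representative $k \in N_K(\fa_0)$, the automorphism $\Ad_k$ preserves $\fm_0$ and $\fa_0$, so $\Ad_k(\ft_0)$ is again a $\theta$-stable Cartan subalgebra of $\fm_0$ contained in $\fk_0$. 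Since all such compact Cartan subalgebras are $\Ad(M_0\cap K)$-conjugate and $M_0\cap K\subseteq M\subseteq Z_K(\fa_0)$, I would replace $k$ by $mk$ for a suitable $m\in M_0\cap K$ (which does not change $w$) to arrange $\Ad_k(\ft_0)=\ft_0$, i.e.\ $\Ad_k$ preserves $\fh=\fa+\ft$ with its real decomposition; then I would multiply $k$ further by an element of $N_{M\cap K}(\ft_0)\subseteq Z_K(\fa_0)$ so that $\Ad_k$ also preserves a fixed positive system $\Delta^+(\fm_0,\ft_0)$. Declaring $w$ to act on $i\ft_0$, and dually on $i\ft_0^*$, by $\Ad_k$ (the two agree since $\Ad_k$ is orthogonal for \eqref{eq: innerproduct}) is the candidate action, and it yields the last sentence of the theorem the moment one knows the assignment is a genuine group action.

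The hard part will be precisely this well-definedness. Unwinding the choices above, the prospective action lives on the group $Q := \{\, kZ_K(\fh): k\in N_K(\fh),\ \Ad_k \text{ preserves } \Delta^+(\fm_0,\ft_0)\,\}$, which sits in an extension
\[1 \longrightarrow \operatorname{Stab}_{W(M\cap K,T)}\!\big(\Delta^+(\fm_0,\ft_0)\big) \longrightarrow Q \longrightarrow W(\fg_0,\fa_0) \longrightarrow 1,\]
surjectivity on the right being exactly the adjustment of representatives made above. When $M$ is connected the kernel is trivial, $Q\xrightarrow{\ \sim\ }W(\fg_0,\fa_0)$, and the action is canonical; in general the kernel is a finite ``outer'' group coming from $\pi_0(M)$, and producing a coherent family of representatives (a homomorphic section of this extension) is what really needs to be established. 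I would treat this as the main obstacle and invoke Knapp's structure theory for cuspidal $MA$ with $M$ possibly disconnected (Theorems 3.7 and 4.10 of \cite{knappcommutativity}) rather than reprove it from scratch.

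Granting the action, the invariance of $\Lambda_\sigma$ is the comparatively soft part, and I would argue as follows. Suppose $(w\cdot\sigma)|_{M_0}\cong\sigma|_{M_0}$, and take the representative $k=k_w$ constructed above. Conjugation by $k$ carries the irreducible (discrete series) constituents of $\sigma|_{M_0}$ bijectively to those of $(k\sigma k^{-1})|_{M_0}=(w\cdot\sigma)|_{M_0}$, and on their Harish--Chandra parameters it acts by $\Ad_k|_{\ft_0}=w$; since $\Ad_k$ preserves $\Delta^+(\fm_0,\ft_0)$, it sends dominant parameters to dominant parameters. Hence $w\cdot\Lambda_\sigma$ is a dominant Harish--Chandra parameter of some constituent of $(w\cdot\sigma)|_{M_0}\cong\sigma|_{M_0}$; because discrete series of $M_0$ are determined by the $W(\fm_0,\ft_0)$-orbit of their parameter and $\Lambda_\sigma$ depends only on the isomorphism class of $\sigma|_{M_0}$, this forces $w\cdot\Lambda_\sigma=\Lambda_\sigma$. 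If $\sigma|_{M_0}$ has several constituents one runs the same argument with the $N_M(T)$-orbit of their parameters, which is exactly the ambiguity already built into the definition of $\Lambda_\sigma$.
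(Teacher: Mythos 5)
The paper does not prove this statement; it is quoted verbatim as a citation to Knapp \cite{knappcommutativity} (and Clozel--Delorme), and the text immediately proceeds to use it. So there is no internal proof to compare your attempt against, and quoting the result is in fact the intended treatment.

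Your sketch is a reasonable reconstruction of the structure of Knapp's argument, and you correctly locate the genuine difficulty: one must produce a coherent family of representatives in $N_K(\fh)$, preserving $\Delta^+(\fm_0,\ft_0)$, for which $w \mapsto \Ad_k$ is an honest homomorphism --- i.e., a splitting of the extension you write down. Deferring that to Knapp's structure theory is exactly right, since for disconnected $M$ this is where the real content lies. However, the part you call ``comparatively soft'' has a gap you only half-acknowledge. When $\sigma|_{M_0}$ is reducible, its irreducible constituents may carry \emph{distinct} dominant Harish--Chandra parameters (they are related by outer automorphisms coming from $M/M_0$, not merely by $W(\fm_0,\ft_0)$). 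Showing that $w\cdot\Lambda_\sigma$ is dominant and equals the parameter of \emph{some} constituent of $(w\cdot\sigma)|_{M_0}\cong\sigma|_{M_0}$ therefore does not force the on-the-nose equality $w\cdot\Lambda_\sigma=\Lambda_\sigma$ asserted in the theorem. Your fallback phrase ``the ambiguity already built into the definition of $\Lambda_\sigma$'' would weaken the conclusion to an orbit statement, which is not what Definition~\ref{defn: HC parameter of sigma} or the theorem says, and is not what is used afterwards (Lemma~\ref{lem: hcparamrestrictionthing} needs $W_\sigma$ to genuinely fix $\Lambda_\sigma$). Closing this gap requires Knapp's finer control over how $\widehat{M}_d$ decomposes over $M_0$ for groups in his class --- so in fact both halves of the statement, not just the well-definedness of the action, rest on \cite{knappcommutativity}.
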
  

See also the discussion before \cite[Lemma 7]{delormeclozel}. 
Note that, in particular, $W_\sigma$ acts trivially on $\Lambda_\sigma$. Theorem \ref{thm: knappweylaction} allows us to define an action of $W(\fg_0, \fa_0)$ (in particular, $W_\sigma^0$) on $\fh^*$. Moreover, because this action comes from $\Ad_k$ for some $k \in K$, for each $w \in W(\fg_0, \fa_0)$ there exists some $w' \in W(\fg, \fh)$ such that $w$ acts on $\fh^*$ the same way as $w'$. 
\begin{lem}\label{lem: hcparamrestrictionthing}
	Fix $\sigma \in \widehat{M}_d$. Let $\Lambda_\sigma \in i\ft_0^*$ be the Harish-Chandra parameter of $\sigma\vert_{M_0}$ (recall Definition \ref{defn: HC parameter of sigma}) Then
	\[\PW(\fa^*)^{W_\sigma^0} = \{\lambda \mapsto f(\Lambda_\sigma + \lambda): f \in \bC[\fa^*]^{W_\sigma^0}\PW(\fh^*)^{W(\fg,\fh)}\}.
	\]
\end{lem}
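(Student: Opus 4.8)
The plan is to prove the two inclusions separately; the reverse inclusion $\subseteq$ is the substantial one, and I would reduce it — via a surjectivity-of-restriction step and Theorem~\ref{thm: Rais Theorem} — to an averaging argument over $W_\sigma^0$. Throughout, restriction to the slice refers to restriction of a function on $\fh^* = \fa^* \oplus \ft^*$ to the affine subspace $\Lambda_\sigma + \fa^*$, identified with $\fa^*$, and polynomials on $\fa^*$ are regarded as polynomials on $\fh^*$ via the projection $\fh^* \to \fa^*$.

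For $\supseteq$, I would take $g \in \PW(\fh^*)^{W(\fg,\fh)}$ and $q \in \bC[\fa^*]^{W_\sigma^0}$ and check that $\lambda \mapsto q(\lambda)\, g(\Lambda_\sigma + \lambda)$ lies in $\PW(\fa^*)^{W_\sigma^0}$. The map $\lambda \mapsto g(\Lambda_\sigma + \lambda)$ is holomorphic, and since $\Lambda_\sigma \in i\ft_0^*$ the real part of $\Lambda_\sigma + \lambda$ stays bounded on each tube $\|\Re \lambda\| \le k$, so the Paley-Wiener bounds for $g$ transfer to the slice, giving $g(\Lambda_\sigma + \cdot) \in \PW(\fa^*)$; multiplication by the polynomial $q$ keeps us in $\PW(\fa^*)$. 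For $W_\sigma^0$-invariance I would invoke Theorem~\ref{thm: knappweylaction} and the discussion following it: every $w \in W_\sigma^0 \subset W_\sigma$ acts on $\fh^*$ as an element $w'$ of $W(\fg,\fh)$ which respects the decomposition $\fh^* = \fa^* \oplus \ft^*$ and fixes $\Lambda_\sigma$ (because $W_\sigma$ fixes $\Lambda_\sigma$), so $w'(\Lambda_\sigma + \lambda) = \Lambda_\sigma + w\lambda$; then $W(\fg,\fh)$-invariance of $g$ forces $g(\Lambda_\sigma + w\lambda) = g(\Lambda_\sigma + \lambda)$, and together with the $W_\sigma^0$-invariance of $q$ this gives the claim. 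The same computation with $q = 1$ shows that the slice-restriction of any element of $\PW(\fh^*)^{W(\fg,\fh)}$ lies in $\PW(\fa^*)^{W_\sigma^0}$, which I reuse below.

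For $\subseteq$, the first step is to show the restriction map $\rest \colon \PW(\fh^*) \to \PW(\fa^*)$, $g \mapsto g(\Lambda_\sigma + \cdot)$, is surjective. Given $F \in \PW(\fa^*)$, I would set $g(\lambda, \tau) = F(\lambda)\, \chi(\tau)$, where $\chi(\tau) = e^{Q(\tau - \Lambda_\sigma)}$ and $Q$ is the complex-bilinear extension of a positive-definite real quadratic form on the real structure of $\ft^*$ containing $\Lambda_\sigma$; then $|\chi(\tau)| = e^{\|\Re(\tau - \Lambda_\sigma)\|^2 - \|\Im(\tau - \Lambda_\sigma)\|^2}$ is bounded on each tube and decays rapidly as $\Im \tau \to \infty$, so $\chi \in \PW(\ft^*)$, while $\chi(\Lambda_\sigma) = 1$ forces $g(\Lambda_\sigma + \lambda) = F(\lambda)$; a routine product estimate gives $g \in \PW(\fh^*)$. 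Next I would apply Theorem~\ref{thm: Rais Theorem} to $W(\fg,\fh)$ acting on $\fh^*$ to write $g = \sum_i p_i g_i$ with $p_i \in \bC[\fh^*]$ homogeneous and $g_i \in \PW(\fh^*)^{W(\fg,\fh)}$. Restricting to the slice, $F = \sum_i \bar p_i\, \tilde g_i$ with $\bar p_i \in \bC[\fa^*]$ the restriction of $p_i$ and $\tilde g_i = g_i(\Lambda_\sigma + \cdot) \in \PW(\fa^*)^{W_\sigma^0}$. Finally, applying the averaging projection $\Av_{W_\sigma^0}$ to both sides and using that $F$ and the $\tilde g_i$ are $W_\sigma^0$-invariant, I get $F = \sum_i \bigl(\Av_{W_\sigma^0} \bar p_i\bigr)\, \tilde g_i$; since $\Av_{W_\sigma^0} \bar p_i \in \bC[\fa^*]^{W_\sigma^0}$, this realizes $F$ as the slice-restriction of $\sum_i (\Av_{W_\sigma^0} \bar p_i)\, g_i \in \bC[\fa^*]^{W_\sigma^0}\PW(\fh^*)^{W(\fg,\fh)}$, as desired.

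The step I expect to demand the most care is not any single estimate but keeping the real structures on $\fh^*$ coherent: Theorem~\ref{thm: Rais Theorem} requires $W(\fg,\fh)$ to act by reflections, which forces one to regard $\fh^*$ as the complexification of $\fa_0^* + i\ft_0^*$ (where the roots are real-valued) rather than of $\fa_0^* + \ft_0^*$, and one must then check that the slice $\Lambda_\sigma + \fa^*$, the Gaussian $\chi$, and the restriction map are all compatible with this choice — which they are, precisely because $\Lambda_\sigma \in i\ft_0^*$ becomes a real point. The other point needing precision is the appeal to Theorem~\ref{thm: knappweylaction}, specifically that the $W_\sigma^0$-action on $\fh^*$ is block-diagonal for $\fa^* \oplus \ft^*$ and fixes $\Lambda_\sigma$. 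As an algebraic shadow, the same averaging argument shows that $\bC[\fa^*]^{W_\sigma^0}$ is a finite module over the restriction of $\bC[\fh^*]^{W(\fg,\fh)}$ to the slice (the analogue of \cite[Lemma 8]{delormeclozel}), which could also be deduced from properness of $\Lambda_\sigma + \fa^* \to \fh^*/W(\fg,\fh)$.
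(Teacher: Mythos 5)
Your proof is correct and follows essentially the same route as the paper's: lift an element of $\PW(\fa^*)$ to $\PW(\fh^*)$ by multiplying against a cutoff on $\ft^*$ that equals $1$ at $\Lambda_\sigma$, apply Theorem \ref{thm: Rais Theorem} to $W(\fg,\fh)$, restrict to the slice $\Lambda_\sigma+\fa^*$, and average over $W_\sigma^0$ using the $W_\sigma^0$-invariance of the Rais coefficients $f_i$ (via Theorem \ref{thm: knappweylaction}). The only organizational difference is that the paper fixes a $W_\sigma^0$-invariant cutoff on $\ft^*$ and averages the Rais decomposition in $\PW(\fh^*)$ before restricting, whereas you use a Gaussian cutoff and average the restricted polynomials $\bar p_i\in\bC[\fa^*]$ after restricting, which is a slightly more transparent way of landing the coefficients in $\bC[\fa^*]^{W_\sigma^0}$ rather than $\bC[\fh^*]^{W_\sigma^0}$.
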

\begin{proof}
	Compare to \cite[Lemmas 7,8]{delormeclozel}. We first note that
	\begin{equation}\label{eq: hcparamrestrictionproofeq1}
		\{\lambda \mapsto f(\Lambda_\sigma + \lambda): f \in \PW(\fh^*)^{W_\sigma^0}\} = \PW(\fa^*)^{W_\sigma^0}.
	\end{equation}
	Indeed, choose any $W_\sigma^0$-invariant $F \in \PW(\ft^*)$ such that $F(\Lambda_\sigma) = 1$. Then any $f \in \PW(\fa^*)^{W_\sigma^0}$ is the restriction of $(\lambda_1 \oplus \lambda) \mapsto F(\lambda_1) f(\lambda)$ to $\Lambda_\sigma + \fa^*$, where $\lambda_1 \in \ft^*, \lambda \in \fa^*$.
	
	Next, by Theorem \ref{thm: Rais Theorem} we see that
	\[\PW(\fh^*) = \bC[\fh^*] \PW(\fh^*)^{W(\fg, \fh)}.\]
	Now note that elements of $\PW(\fh^*)^{W(\fg, \fh)}$ are also $W_\sigma$-invariant by the discussion before this lemma. Therefore, averaging the decomposition $f = \sum p_i f_i$ by the action of $W_\sigma^0$, we have 
	\begin{equation}\label{eq: hcparamrestrictionproofeq2}	
		\PW(\fh^*)^{W_\sigma^0} = \bC[\fa^*]^{W_\sigma^0}  \PW(\fh^*)^{W(\fg, \fh)}.
	\end{equation}
	The lemma follows from \eqref{eq: hcparamrestrictionproofeq1} and \eqref{eq: hcparamrestrictionproofeq2}.
\end{proof}

\section{Filtrations on $\cS(G,F)$ and $C_r^*(G,F)$}
\begin{defn}
	We say two cuspidal pairs $(P = L_P N_P, \sigma), (Q = L_Q N_Q, \tau)$ are \textit{$G$-conjugate} if there is an element of $G$ which conjugates $L_P$ to $L_Q$ and conjugates $\sigma$ to $\tau$. Note that a $G$-conjugacy class of cuspidal pairs is also known as an associate class (see \cite[Definition 5.2]{clarecrisphigson}).
	
	Because a given $G$-conjugacy class of cuspidal pairs $[P,\sigma]$ only depends on the Levi subgroup $L$, we may also write $[L, \sigma]$. Moreover, because the Levi subgroup is implicitly specified by the representation $\sigma$, we may simply write $[\sigma]$ for such a class.
\end{defn}
\begin{thm}[{See \cite[Theorem 7.17]{voganmainpaper}}]\label{thm: voganthingo}
	The sets $\minK{\sigma}$ partition $\widehat{K}$, and two such sets $\minK{\sigma}, \minK{\tau}$ are equal if and only if $[\sigma] = [\tau]$. 
\end{thm}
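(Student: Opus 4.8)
The plan is to deduce this directly from Vogan's work cited as \cite[Theorem 1.1]{voganmainpaper} (the ``lowest $K$-type'' classification) together with Theorem \ref{thm: voganclassification} above. The statement has two parts: (a) that the sets $\minK{\sigma}$ cover $\widehat{K}$ and are pairwise disjoint, and (b) that $\minK{\sigma} = \minK{\tau}$ exactly when $[\sigma] = [\tau]$.

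For the covering, I would argue as follows. Given any $\gamma \in \widehat{K}$, Vogan's classification of irreducible admissible $(\fg,K)$-modules by lowest $K$-type (for the class of groups under consideration, see \cite{voganmainpaper,voganbook}) produces an irreducible admissible representation with $\gamma$ as its (unique) lowest $K$-type. By the subrepresentation theorem, that representation is a subquotient of some principal series $I^P_{\sigma,\lambda}$ with $(P,\sigma)$ a cuspidal pair. By Theorem \ref{thm: voganclassification}, every irreducible subquotient of $I^P_{\sigma,\lambda}$ contains a minimal $K$-type of $I_\sigma$, so the lowest $K$-type $\gamma$ of this subquotient lies in $\minK{\sigma}$; hence $\gamma$ is covered. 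For disjointness: if $\gamma \in \minK{\sigma} \cap \minK{\tau}$, then (again using that minimal $K$-types occur with multiplicity one, and are the lowest $K$-types of the corresponding Vogan-Zuckerman subquotients $J^P_{\sigma,\lambda}[\gamma]$, $J^Q_{\tau,\mu}[\gamma]$ as in Theorem \ref{thm: voganclassification}) the rigidity part of Vogan's theorem forces these two subquotients to be the same irreducible module, and then Vogan's classification of cuspidal data attached to an irreducible module (the associate class is an invariant of the module) gives $[\sigma] = [\tau]$. This simultaneously shows that $\minK{\sigma}$ depends only on $[\sigma]$ and that distinct sets are disjoint, which is the partition statement.

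For the ``only if'' direction of (b) — that $\minK{\sigma} = \minK{\tau}$ implies $[\sigma]=[\tau]$ — I would pick any $\gamma \in \minK{\sigma} = \minK{\tau}$ and run the previous paragraph's disjointness argument: the module $J^P_{\sigma,\lambda}[\gamma]$ (resp. $J^Q_{\tau,\mu}[\gamma]$) is, for generic or suitably chosen parameter, an irreducible module with lowest $K$-type $\gamma$, and the uniqueness in Vogan's correspondence between lowest $K$-types and irreducible modules pins down the cuspidal datum up to $G$-conjugacy, giving $[\sigma] = [\tau]$. The ``if'' direction is immediate from $G$-conjugacy: conjugating the pair $(P,\sigma)$ to $(Q,\tau)$ sets up a $K$-equivariant isomorphism $I_\sigma \cong I_\tau$ (intertwining via an element of $N_K(\fa_0)$ together with the identification of the $\sigma$- and $\tau$-induced spaces), which carries the set of minimal-length $K$-types to itself, hence $\minK{\sigma} = \minK{\tau}$.

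I expect the main obstacle to be bookkeeping rather than mathematical depth: the real content is entirely inside \cite[Theorem 7.17]{voganmainpaper} (and the supporting structure in \cite{voganbook}), which is precisely the statement being cited, so the ``proof'' is essentially a citation plus a short translation between the language of lowest $K$-types and the language of the sets $\minK{\sigma}$ and associate classes used here. The one point requiring a little care is making sure that the hypotheses on $G$ imposed in Section 2 (Knapp's class of groups) are exactly those under which Vogan's classification and the partition statement are valid in the possibly-disconnected setting; this is handled by the references \cite{knappcommutativity,delormeclozel} already invoked, and by the fact that the length function is well-defined independently of the choice of highest weight in disconnected components. Accordingly I would keep the proof to a brief paragraph citing Vogan and invoking Theorem \ref{thm: voganclassification}, rather than reproving anything.
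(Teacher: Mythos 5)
The paper provides no proof of this theorem at all; it is stated as a direct citation to Vogan (\cite[Theorem 7.17]{voganmainpaper}), exactly as you anticipate. Your expanded sketch of how the cited classification (lowest $K$-type correspondence, subrepresentation theorem, and the rigidity in Theorem \ref{thm: voganclassification}) implies the partition statement is a reasonable unpacking, but it is the same approach — the mathematical content is entirely offloaded to the citation, which is what the paper does too.
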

In particular, we see that the value $\|\sigma\|$ is independent of the cuspidal pair $(P, \sigma)$ up to $G$-conjugacy. We fix a total order on $G$-conjugacy classes $[\sigma]$ such that if $\|\sigma\| < \|\tau\|$ then $[\sigma] < [\tau]$. We fix representatives $(P_n, \sigma_n)$ for each $G$-conjugacy class, so that
\begin{equation*}\label{eq: totalordering}
	[\sigma_1] < [\sigma_2] < \cdots.
\end{equation*}
Let $\pi_n$ denote $\pi_{\sigma_n}$, and $p_n$ denote $p_{\minK{\sigma_n}}$. Often, $n$ will be fixed and we will write $\sigma = \sigma_n$.

\begin{defn}
	For each $n \in \bN$, define the ideal $\cJ_n \subset \cS(G,F)$ by
	\[\cJ_n = \bigcap_{m>n} \ker \left(\pi_m: \cS(G,F) \to \PW(\fa^*, \End(p_F I_{\sigma_m})) \right).\] We define ideals $J_n \subset C_r^*(G,F)$ analogously. We also set $\cJ_0 = J_0 = 0$.
\end{defn}
By Theorem \ref{thm: voganthingo}, for each finite set $F \subset \Khat$, we have $p_F \cJ_n p_F =\cS(G,F)$ and $p_F J_n p_F = C_r^*(G,F)$ for large enough $n$.

Fix $R \geq 0$ and set $F = \{\gamma \in \Khat: \|\gamma\| \leq R\}$. For each $n \in \bN$, we have the injection
\begin{equation*}\label{eq: subquo inj}
	\overline{\pi}_{n}:  \cJ_n/\cJ_{n-1} \hookrightarrow \pi_n(\cS(G,F)).
\end{equation*}
Therefore, as an algebra we may identify $\cJ_n / \cJ_{n-1}$ with the image of $\cJ_n$ under $\pi_n$. Similarly, we may identify $J_n/J_{n-1}$ with $\pi_n(J_n)$.

The main theorem will be a consequence of the following four theorems describing these subquotients. We let $w \in W_{\sigma}$ act on $\PW(\fa^*, \End(p_{A(\sigma)}I_\sigma))$ by
\[(w \cdot \phi)(\lambda) := \cA(P,w,w^{-1}\lambda)\phi(w^{-1}\lambda)\cA(P,w^{-1},\lambda).\]
The above also defines an action of $W_\sigma$ on $C_0(i\fa^*_0, \End(p_{A(\sigma)}I_\sigma))$.

\begin{thm}\label{thm: Jsig minK}
	For each $n \in \bN$ with $A(\sigma_n) \subset F$,
	\[(\cJ_n/\cJ_{n-1}) p_{n}(\cJ_n/\cJ_{n-1}) = \cJ_n/\cJ_{n-1}.\]
\end{thm}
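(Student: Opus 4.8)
The plan is to push the statement through the injective algebra homomorphism $\overline{\pi}_n\colon \cJ_n/\cJ_{n-1}\hookrightarrow \PW(\fa^*,\End(p_FI_{\sigma_n}))$ and prove a factorization identity for the image. First observe that, since $\minK{\sigma_n}\subset F$, the projection $p_n=p_{\minK{\sigma_n}}$ acts on $\cS(G,F)$ by convolution as an idempotent multiplier, and because $\pi_m(p_n*\phi)(\lambda)=\pi^{P_m}_{\sigma_m,\lambda}(p_n)\,\pi_m(\phi)(\lambda)$ this operation kills the joint kernel defining each $\cJ_m$; hence it preserves $\cJ_n$ and $\cJ_{n-1}$ and descends to a multiplier of $\cJ_n/\cJ_{n-1}$ which $\overline{\pi}_n$ carries to pointwise multiplication by the constant idempotent $E:=\pi^{P_n}_{\sigma_n,\lambda}(p_n)$, the projection of $p_FI_{\sigma_n}$ onto the (nonzero) span of its minimal $K$-types; the hypothesis $\minK{\sigma_n}\subset F$ is exactly what makes $E\le p_F$. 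The inclusion $(\cJ_n/\cJ_{n-1})\,p_n\,(\cJ_n/\cJ_{n-1})\subseteq \cJ_n/\cJ_{n-1}$ is then immediate, so the entire content is the reverse inclusion, which becomes: every $f\in\pi_n(\cJ_n)$ is a finite sum $f=\sum_i g_i\,E\,h_i$ with $g_i,h_i\in\pi_n(\cJ_n)$.

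For a single $\lambda$ this is essentially formal. By Vogan--Zuckerman classification (Theorem \ref{thm: voganclassification}), every irreducible subquotient of $I^{P_n}_{\sigma_n,\lambda}$ contains a minimal $K$-type of $I_{\sigma_n}$, and each such $K$-type occurs with multiplicity one, so $E$ meets every simple quotient of the finite-dimensional algebra $\pi_{\sigma_n,\lambda}(\cS(G))|_{p_F I_{\sigma_n}}$; it follows by an elementary finite-dimensional argument that $E$ generates this algebra as a two-sided ideal, so $f(\lambda)$ admits a factorization $f(\lambda)=\sum_i g_i(\lambda)\,E\,h_i(\lambda)$ with each factor the value at $\lambda$ of an element of $\pi_n(\cJ_n)$ (using that $\cJ_n$ is stable under the left and right ``Hecke'' operations $\phi\mapsto p_F(L_u\phi)$ and $\phi\mapsto (R_u\phi)p_F$ for $u\in\cU(\fg)$, which again preserve the joint kernel). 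The real work is to do this uniformly: one must choose \emph{finitely many} $g_i,h_i\in\pi_n(\cJ_n)$ for which the identity holds on all of $\fa^*$, with every factor holomorphic, satisfying the Paley--Wiener seminorm bounds, and --- crucially --- respecting the intertwining relations of Theorems \ref{thm: knappstein}--\ref{thm: norminterwiner} that cut $\pi_n(\cJ_n)$ out of the full Paley--Wiener space. This is precisely the task carried out by the ``Factoring Theorem'' of Section~7, modeled on \cite[Proposition 1]{delormePW}: one normalizes along the $W_{\sigma_n}$-orbit using the Knapp--Stein operators $\cA(P_n,w,\lambda)$, which are rational on $p_FI_{\sigma_n}$ by Theorem \ref{thm: norminterwiner}, reduces to the minimal-$K$-type block, and divides a general Paley--Wiener family by the relevant polynomial data using the division results of Section~\ref{sec: polynomials} (Lemma \ref{lem: polydivision} and Theorem \ref{thm: Rais Theorem}).

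Accordingly I would organize the proof as: (i) the reduction above, identifying the claim with a factorization statement for $\pi_n(\cJ_n)$ and the constant idempotent $E$; (ii) an appeal to the Factoring Theorem of Section~7 to extract an explicit finite factorization $f=\sum_i g_i\,E\,h_i$; (iii) reassembly via $\overline{\pi}_n^{-1}$ to recover the algebraic identity $(\cJ_n/\cJ_{n-1})\,p_n\,(\cJ_n/\cJ_{n-1})=\cJ_n/\cJ_{n-1}$. I expect step (ii), the globalization, to be the main obstacle: the pointwise factorization carries no control in $\lambda$ and degenerates exactly on the reducibility hyperplanes of $I^{P_n}_{\sigma_n,\lambda}$, where the module structure --- and hence which products $g(\lambda)\,E\,h(\lambda)$ are attainable --- jumps, and it is Delorme's normalization-and-division machinery, rather than any soft argument, that tames this. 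Adapting that machinery from $C_c^\infty(G)$ to $\cS(G)$ should be routine once the Paley--Wiener division lemmas of Section~\ref{sec: polynomials} are in hand, since the only change is to replace compactly-supported Fourier transforms by elements of $\PW(\fa^*,\cdot)$ throughout.
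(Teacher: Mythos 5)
Your framing via $\overline{\pi}_n$ and the constant idempotent $E$ is the right one, and you correctly identify the Factoring Theorem (Theorem~\ref{thm: factoring thm}) as the essential technical input. But step~(ii) of your outline has a genuine gap: the Factoring Theorem does not by itself produce a factorization $f=\sum_i g_i\,E\,h_i$ with $g_i,h_i\in\pi_n(\cJ_n)$. What it gives is
\[
\pi_n(\cJ_n)=\pi_n\bigl(R(\fg,F)\,p_n\cS(G,F)p_n\,R(\fg,F)\bigr),
\]
and the flanking factors lie in the Hecke algebra $R(\fg,F)$ --- distributions supported on $K$, which under $\pi_n$ become \emph{operator-valued polynomials} in $\lambda$. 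These are not elements of $\pi_n(\cJ_n)$, since they do not decay. So after invoking the Factoring Theorem you only have $f=\sum_i \pi_n(a_i)\,E\,\pi_n(b_i)\,E\,\pi_n(c_i)$ with $a_i,c_i\in R(\fg,F)$, which is not yet the identity you need.

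The missing ingredient --- and the actual crux of the paper's argument --- is an absorption step showing that $\pi_n\bigl(\cS(G,\minK{\sigma_n})\bigr)=\pi_n\bigl(\cS(G,\minK{\sigma_n})\,R(\fg,\minK{\sigma_n})\,\cS(G,\minK{\sigma_n})\bigr)$, i.e.\ the minimal-$K$-type block can swallow the extra Hecke factor in the middle. This requires a Dixmier--Malliavin--type \emph{multiplicative} factorization $\PW(\fa^*)\cdot\PW(\fa^*)=\PW(\fa^*)$ (Lemma~\ref{lem: dixmiermalliavin}, proved via Bernstein--Kr\"otz), which is then averaged over $W_\sigma^0$ and projected onto the $\widehat{R}_\sigma$-isotypic pieces using Rais's Theorem~\ref{thm: Rais Theorem} to respect the invariance/twist conditions cut out by Theorem~\ref{thm: PW on lowktypes}. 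This is logically independent of the polynomial \emph{division} lemmas of Section~\ref{sec: polynomials} that you cite --- those are used inside the proof of the Factoring Theorem itself, not to perform this last step, which needs $f=gh$ with both factors in $\PW$, not $f=p\cdot q$ with $p$ a polynomial. Your pointwise Vogan--Zuckerman argument is a correct heuristic (and is essentially what the paper uses for the $C^*$-side, Theorem~\ref{thm: Cstar mink}), but on the Fr\'echet side the only mechanism the paper has to make it uniform is the Factoring Theorem plus this Dixmier--Malliavin step, and the latter is absent from your proposal.
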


\begin{thm}\label{thm: Cstar mink}
	For each $n \in \bN$ with $A(\sigma_n) \subset F$
	\[\overline{(J_n/J_{n-1})p_{n}(J_n/J_{n-1})} = J_n/J_{n-1}.\] 
\end{thm}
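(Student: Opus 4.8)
The plan is to reformulate the statement as the assertion that the projection $p_n = p_{\minK{\sigma_n}}$ is \emph{full} in the $C^*$-algebra $J_n/J_{n-1}$, and then verify fullness representation-theoretically. First I would record that, because $\minK{\sigma_n}\subseteq F$, left and right convolution by $p_n$ defines a self-adjoint idempotent multiplier of $C_r^*(G,F)$ which preserves each ideal $J_m$, and hence descends to a projection $p_n\in M(J_n/J_{n-1})$. Then $(J_n/J_{n-1})\,p_n\,(J_n/J_{n-1})$ is a two-sided ideal whose norm closure is the closed ideal of $J_n/J_{n-1}$ generated by $p_n$; by general $C^*$-algebra theory this closed ideal is all of $J_n/J_{n-1}$ if and only if $p_n$ lies outside the kernel of every irreducible $*$-representation of $J_n/J_{n-1}$. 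So it suffices to prove that $\rho(p_n)\neq 0$ for each irreducible representation $\rho$ of $J_n/J_{n-1}$.

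The first real step is to describe these irreducible representations. Here I would appeal to the Clare--Crisp--Higson analysis of $C_r^*(G)$ \cite{clarecrisphigson}: $C_r^*(G,F)$ decomposes as a direct sum over associate classes of cuspidal pairs, the map $\pi_m$ is supported on the $[\sigma_m]$-summand, and consequently $J_n=\bigcap_{m>n}\ker\pi_m$ is exactly the sum of the summands attached to $[\sigma_1],\dots,[\sigma_n]$; thus $J_n/J_{n-1}$ is the single summand attached to $[\sigma_n]$. (One can also see this more directly, without the full decomposition: $J_n/J_{n-1}$, realized via $\pi_n$ as a subalgebra of $C_0(i\fa_0^*,\End(p_F I_{\sigma_n}))$, is a module over the central subalgebra $C_0(i\fa_0^*)$, so every irreducible representation factors through evaluation at some $\mu\in i\fa_0^*$ followed by an irreducible constituent of the restriction of $\pi^{P_n}_{\sigma_n,\mu}$ to $p_F I_{\sigma_n}$.) Either way, one concludes that the irreducible representations of $J_n/J_{n-1}$ are precisely the representations $p_F\tau$, where $\tau$ runs over the irreducible constituents of the unitary principal series $\pi^{P_n}_{\sigma_n,\mu}$, $\mu\in i\fa_0^*$.

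The second step is to apply Vogan--Zuckerman classification. Fix $\rho=p_F\tau$ with $\tau$ a constituent of $\pi^{P_n}_{\sigma_n,\mu}$. Since $\Re\mu=0$ lies in the closed chamber $\overline{\fa^*_{0,P_n,+}}$, Theorem \ref{thm: voganclassification} applies to $I^{P_n}_{\sigma_n,\mu}$, and tells us that $\tau$ --- which is a subrepresentation of the unitary representation $\pi^{P_n}_{\sigma_n,\mu}$, hence also a quotient --- contains a minimal $K$-type $\gamma\in\minK{\sigma_n}$. By the hypothesis $\minK{\sigma_n}\subseteq F$ we have $\gamma\in F$, so the projection $p_n\tau$ onto the $\minK{\sigma_n}$-isotypic subspace of $\tau$ is nonzero and lies inside $p_F\tau$; therefore $\rho(p_n)=p_n\tau\neq 0$. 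Since $\rho$ was arbitrary, $p_n$ is full and the theorem is proved.

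I expect the main obstacle to be the first step: correctly identifying the primitive ideal space of $J_n/J_{n-1}$, i.e.\ verifying that this subquotient is exactly the associate-class component of $C_r^*(G,F)$ attached to $[\sigma_n]$ and that its spectrum consists of the tempered constituents of the $\pi^{P_n}_{\sigma_n,\mu}$. This is where the structural results of \cite{clarecrisphigson} and the disjointness of tempered representations across associate classes enter; once it is in place, the rest is a direct application of Theorem \ref{thm: voganclassification} together with the inclusion $\minK{\sigma_n}\subseteq F$. (If Theorem \ref{thm: Jsig minK} is already available there is a shortcut: the inclusion $\cS(G,F)\hookrightarrow C_r^*(G,F)$ induces a map $\cJ_n/\cJ_{n-1}\to J_n/J_{n-1}$ with dense image carrying $p_n$ to $p_n$, so $(J_n/J_{n-1})\,p_n\,(J_n/J_{n-1})$ contains the dense subset coming from $(\cJ_n/\cJ_{n-1})\,p_n\,(\cJ_n/\cJ_{n-1})=\cJ_n/\cJ_{n-1}$; the one point needing attention is the density of the image of $\cJ_n$ in $J_n$, which holds because $\cJ_n=\cS(G,F)\cap J_n$ by holomorphy of $\lambda\mapsto\pi^{P_m}_{\sigma_m,\lambda}(\phi)$ on $\fa^*$.)
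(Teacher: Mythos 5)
Your main argument is correct and is essentially the paper's proof: both reduce the claim to showing that every irreducible representation of the subquotient (equivalently, of $\pi_n(C_r^*(G))$) is nonzero on the minimal-$K$-type projection, identify those irreducible representations as constituents of the unitary principal series $\pi^{P_n}_{\sigma_n,\mu}$ via \cite{clarecrisphigson}, and then invoke Theorem \ref{thm: voganclassification} to produce a minimal $K$-type inside each one. One caveat on your parenthetical ``shortcut'': the equality $\cJ_n=\cS(G,F)\cap J_n$ (which does follow from holomorphy) does not by itself give density of $\cJ_n$ in $J_n$ --- that density is a separate, nontrivial assertion, and without it the shortcut does not close.
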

\begin{thm}\label{thm: PW on lowktypes}
	For each $n \in \bN$ with $\minK{\sigma_n} \subset F$,
	\[\pi_n(p_n\cJ_n p_n) = \PW(\fa^*,\End(p_n I_{\sigma_n}))^{W_{\sigma_n}}.\]
\end{thm}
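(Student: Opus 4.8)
The plan is to establish the two inclusions separately, after one preliminary simplification.

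\emph{Reduction to $\pi_n(p_n\cS(G,F)p_n)$.} I would first show $p_n\cJ_n p_n=p_n\cS(G,F)p_n$. Since $\minK{\sigma_n}\subset F$ we have $p_Fp_n=p_np_F=p_n$, hence $p_n\in\cS(G,F)$ and $p_n\cS(G,F)p_n=p_n\cS(G)p_n$, so it suffices to check that $p_n\cS(G)p_n\subset\cJ_n$, i.e. that $\pi_m(p_n\phi p_n)=0$ for every $\phi\in\cS(G)$ and every $m>n$. As $\pi_m$ is an algebra homomorphism, $\pi_m(p_n\phi p_n)(\lambda)=\pi^{P_m}_{\sigma_m,\lambda}(p_n)\,\pi_m(\phi)(\lambda)\,\pi^{P_m}_{\sigma_m,\lambda}(p_n)$, and $\pi^{P_m}_{\sigma_m,\lambda}(p_n)$ is the projection of $I_{\sigma_m}$ onto its $\minK{\sigma_n}$-isotypic part. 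Now $n<m$ forces $\|\sigma_n\|\le\|\sigma_m\|$ by the choice of total order, so every $K$-type in $\minK{\sigma_n}$ has length at most $\|\sigma_m\|$, which is the minimal length of a $K$-type occurring in $I_{\sigma_m}$; were such a $K$-type to occur in $I_{\sigma_m}$ it would then be a minimal-length $K$-type of $I_{\sigma_m}$, hence lie in $\minK{\sigma_m}$, contradicting the disjointness of $\minK{\sigma_n}$ and $\minK{\sigma_m}$ in Theorem \ref{thm: voganthingo}. Thus $\pi^{P_m}_{\sigma_m,\lambda}(p_n)=0$, and the reduction follows (also explaining why $\cJ_{n-1}$ does not appear in the statement).

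\emph{The inclusion $\subseteq$.} By Lemma \ref{lem: fourier transform S(G)}, $\pi_n(p_n\phi p_n)\in\PW(\fa^*,\End(p_FI_{\sigma_n}))$; and since $\pi_n(p_n\phi p_n)(\lambda)=P\,\pi^{P_n}_{\sigma_n,\lambda}(\phi)\,P$ with $P=\pi^{P_n}_{\sigma_n,\lambda}(p_n)$ the ($\lambda$-independent) projection onto $p_nI_{\sigma_n}$, it takes values in $\End(p_nI_{\sigma_n})$. For $W_{\sigma_n}$-invariance, recall that for $w\in W_{\sigma_n}$ the normalized operator $\cA(P_n,w,\lambda)$ intertwines $\pi^{P_n}_{\sigma_n,\lambda}$ with $\pi^{P_n}_{\sigma_n,w\lambda}$, preserves $p_nI_{\sigma_n}$ (permuting the minimal $K$-types via the $\widehat{R}_{\sigma_n}$-action), is $\lambda$-independent there, and satisfies the cocycle relations of Theorem \ref{thm: norminterwiner}, in particular $\cA(P_n,w,w^{-1}\lambda)\,\cA(P_n,w^{-1},\lambda)=\Id$ on $p_nI_{\sigma_n}$. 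Substituting $w^{-1}\lambda$ for the parameter in the intertwining identity and multiplying through by $\cA(P_n,w^{-1},\lambda)$ gives $\pi^{P_n}_{\sigma_n,\lambda}(\phi)=\cA(P_n,w,w^{-1}\lambda)\,\pi^{P_n}_{\sigma_n,w^{-1}\lambda}(\phi)\,\cA(P_n,w^{-1},\lambda)$ on $p_nI_{\sigma_n}$, which is exactly the assertion that $\pi_n(p_n\phi p_n)$ is fixed by $w$.

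\emph{The inclusion $\supseteq$ (the main obstacle).} One must show the Fourier transform hits every $W_{\sigma_n}$-invariant $f$. What makes this cleaner than the full Paley--Wiener theorem is that each $\mu\in\minK{\sigma_n}$ has multiplicity one in $I_{\sigma_n}$ and, by Theorem \ref{thm: voganclassification}, the minimal $K$-types separate the irreducible subquotients of $I^{P_n}_{\sigma_n,\lambda}$; consequently, at the reducibility points of the principal series there is no relation among the values $\pi^{P_n}_{\sigma_n,\lambda}(\phi)|_{p_nI_{\sigma_n}}$ beyond the $W_{\sigma_n}$-equivariance exhibited above. To realize a given $W_{\sigma_n}$-invariant $f$ I would form the wave packet $\phi(g)=c\int_{i\fa_0^*}\Tr\big(f(\lambda)^{*}\,p_n\pi^{P_n}_{\sigma_n,\lambda}(g)p_n\big)\,d\lambda$; its integrand is bi-$K$-finite with $K$-types in $\minK{\sigma_n}\subset F$, so $\phi\in p_nC^\infty(G)p_n$, and Harish-Chandra's Plancherel formula together with the orthogonality relations for matrix coefficients of the unitary principal series yields $\pi_n(\phi)|_{i\fa_0^*}=c\,|W_{\sigma_n}|\,f|_{i\fa_0^*}$, hence $\pi_n(\phi)=c\,|W_{\sigma_n}|\,f$ by holomorphy; normalizing $c$ gives $\pi_n(\phi)=f$. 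The substantive point, and where I expect the real difficulty to lie, is the passage from ``$\phi$ in Harish-Chandra's Schwartz space'' (where such a wave packet naturally lives) to ``$\phi\in\cS(G)$'': using that $f$ is holomorphic with Paley--Wiener decay in every strip $|\Re\lambda|\le k$, one deforms the contour $i\fa_0^*$ into the Weyl chamber dictated by the $KAK$-position of $g$ and applies the sharp asymptotic estimates for matrix coefficients of the principal series to obtain the super-exponential decay of $\phi$ and all its derivatives required by Definition \ref{defn: cassalg}. This deformation-and-estimate argument, modeled on Delorme's proof of the Paley--Wiener theorem for $C_c^\infty(G)$, is the content isolated in the ``Factoring Theorem'' of Section~8; Theorem \ref{thm: Rais Theorem} and Lemma \ref{lem: hcparamrestrictionthing} then enter to decompose the $W_{\sigma_n}$-invariant Paley--Wiener space into the pieces indexed by $W_{\sigma_n}^0$ and the $R$-group $R_{\sigma_n}$, reducing the verification to these elementary constituents.
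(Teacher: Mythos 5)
Your reduction $p_n\cJ_np_n=p_n\cS(G,F)p_n$ and the $\subseteq$ inclusion are both correct and complete, and the wave-packet strategy you outline for $\supseteq$ is the right one in spirit. Two points need to be fixed, however.

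Most importantly, your last sentence attributes the hard wave-packet estimate --- showing $\phi\in\cS(G)$ --- to the Factoring Theorem of Section~8. That would be circular: the paper's proof of the Factoring Theorem (via Lemma \ref{lem: main factoring lemma}) already \emph{uses} Theorem \ref{thm: PW on lowktypes}, through \eqref{eq: minKfulldecomp}, to obtain the identification $\pi_\sigma(p_{\mu}\cS(G,\minK{\sigma})p_{\nu})=\PW(\fa^*)^{W_\sigma^0}(\hat{r}_{\mu\nu})\otimes\Hom(p_\nu I_\sigma,p_\mu I_\sigma)$. The paper's actual source for surjectivity is Delorme's description \cite[(1.38)]{delormePW} of the Fourier image on minimal $K$-types; the wave-packet membership statement that replaces your deformation-and-estimate sketch is \cite[Theorem 3]{delormeflensted}, with Lemmas \ref{lem: polydivision} and \ref{lem: hcparamrestrictionthing} then passing from $W^0_\sigma$-equivariance back to full $W_\sigma$-equivariance.

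Secondly, the wave packet as written omits the Plancherel density: $L^2(G)$-orthogonality of principal-series matrix coefficients carries the reciprocal of that density, so integrating against $d\lambda$ rather than against Plancherel measure recovers $f$ divided by the density, not a constant multiple of $f$. Reinstating the density is not cosmetic --- it has poles off $i\fa_0^*$ that must be controlled in the contour shift --- and that is precisely the technical content supplied by the Delorme--Flensted-Jensen citation. With those two repairs, your argument matches the paper's.
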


\begin{thm}\label{thm: Cstar on lowktypes}
	For each $n \in \bN$ with $\minK{\sigma_n} \subset F$,
	\[\pi_n(p_nJ_np_n) = C_0(i\fa^*_0,\End(p_nI_{\sigma_n}))^{W_{\sigma_n}}.\]
\end{thm}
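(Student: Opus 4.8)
The plan is to deduce this from the Clare--Crisp--Higson description of $C_r^*(G)$ together with a routine compression argument. First observe that for $\lambda \in i\fa_0^*$ the principal series $\pi^{P_n}_{\sigma_n,\lambda}$ is tempered, hence weakly contained in the regular representation, so $\pi_n$ extends to a $*$-homomorphism on all of $C_r^*(G)$ with $\|\pi^{P_n}_{\sigma_n,\lambda}(\phi)\| \le \|\phi\|_{C_r^*(G)}$; moreover $\lambda \mapsto \pi^{P_n}_{\sigma_n,\lambda}(\phi)$ vanishes at infinity on $i\fa_0^*$ for every $\phi \in C_r^*(G)$, since this holds for $\phi \in \cS(G)$ by rapid decay and the general case follows by uniform approximation. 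Next invoke the main structural result of \cite{clarecrisphigson}: the operator-valued Fourier transform $\phi \mapsto (\pi_{\sigma,\lambda}(\phi))_{[\sigma],\lambda}$ identifies $C_r^*(G)$ with the $c_0$-direct sum $\bigoplus_{[\sigma]} C_0(i\fa_0^*, \cK(\cH_\sigma))^{W_\sigma}$ over associate classes of cuspidal pairs, where $W_\sigma$ acts precisely by the formula $(w\cdot f)(\lambda) = \cA(P,w,w^{-1}\lambda) f(w^{-1}\lambda)\cA(P,w^{-1},\lambda)$ used in the statement (this is a $C^*$-algebraic repackaging of the Harish--Chandra Plancherel theorem and the Knapp--Stein--Arthur analysis of the tempered dual). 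Cutting by $p_F$ on both sides, and using Vogan's Theorem \ref{thm: voganthingo} --- which is what makes the total order on associate classes, and hence the filtration, well defined --- one sees that $C_r^*(G,F)$ decomposes as $\bigoplus_m p_F\big(C_0(i\fa_0^*, \cK(\cH_{\sigma_m}))^{W_{\sigma_m}}\big)p_F$, with only finitely many nonzero summands and with $J_n$ corresponding to the partial sum over $m \le n$; hence $\pi_n(J_n)$ is exactly the $[\sigma_n]$-summand of $C_r^*(G,F)$.

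It remains to compute this summand by compressing, first by $p_F$ and then by $p_n$. Because $\pi^{P_n}_{\sigma_n,\lambda}|_K$ is the $\lambda$-independent representation of $K$ on $\cH_{\sigma_n}$ induced from $M\cap K$, convolution on the left or right by $p_F$ (resp. $p_n$) corresponds under $\pi_n$ to compression by the projection $q_F$ onto $p_F\cH_{\sigma_n} = p_F I_{\sigma_n}$ (resp. $q_n$ onto $p_n I_{\sigma_n}$), both finite-dimensional since $K$-types occur in $I_{\sigma_n}$ with finite multiplicity. The key facts are that the operators $T(w)$ and the normalized intertwiners $\cA(P,w,\lambda)$ are $K$-equivariant, hence commute with $q_F$, and that $\cA(P,w,\lambda)$ acts by a nonzero scalar on each minimal $K$-type $p_\mu I_{\sigma_n}$ (Theorems \ref{thm: norminterwiner} and \ref{thm: hatr mu nu}), hence commutes with $q_n$ as well. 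Thus $q_F$ and $q_n$ commute with the $W_{\sigma_n}$-action, and the $W_{\sigma_n}$-action preserves the $*$-subalgebras $\End(p_n I_{\sigma_n}) \subseteq \End(p_F I_{\sigma_n}) \subseteq \cK(\cH_{\sigma_n})$ and restricts to the stated actions on them, so a $W_{\sigma_n}$-equivariant function valued in the smaller algebra is automatically equivariant when viewed in the larger one. Consequently compressing $C_0(i\fa_0^*, \cK(\cH_{\sigma_n}))^{W_{\sigma_n}}$ by $q_F$ yields exactly $C_0(i\fa_0^*, \End(p_F I_{\sigma_n}))^{W_{\sigma_n}}$, and --- using $\minK{\sigma_n} \subseteq F$, so that $q_n \le q_F$ --- a further compression by $q_n$ yields $C_0(i\fa_0^*, \End(p_n I_{\sigma_n}))^{W_{\sigma_n}}$. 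Since $\pi_n$ is a homomorphism, $\pi_n(p_n J_n p_n) = q_n\,\pi_n(J_n)\,q_n$, and the chain of compressions gives $\pi_n(p_n J_n p_n) = C_0(i\fa_0^*, \End(p_n I_{\sigma_n}))^{W_{\sigma_n}}$, which is the assertion.

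The only real obstacle is lining up conventions: one must check that the $W_{\sigma_n}$-action in the Clare--Crisp--Higson decomposition is literally the one in the theorem --- built from the normalized Knapp--Stein intertwiners $\cA(P,w,\cdot)$ of Theorem \ref{thm: norminterwiner} together with the operators $T(w)$ --- and that the principal series there are normalized as in \eqref{eq: princp series defn}. Once that is pinned down, everything else is the formal compression argument above, the only non-formal ingredient being Vogan's theorem (Theorem \ref{thm: voganthingo}), needed so that the filtration singles out one summand. Finally, this identification is upgraded to the Morita-type equivalence $J_n/J_{n-1} \sim C_0(i\fa_0^*, \End(p_n I_{\sigma_n}))^{W_{\sigma_n}}$ of the introduction by the companion Theorem \ref{thm: Cstar mink}, which asserts that $p_n$ is a full idempotent in $J_n/J_{n-1}$.
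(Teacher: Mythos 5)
Your proof is correct and takes essentially the same approach as the paper, which dispatches the theorem in one line by multiplying $p_n$ on the left and right of the Clare--Crisp--Higson formula \eqref{eq: CCHtheory} (using Theorem~\ref{thm: cstar subquo} to identify $\pi_n(J_n)$ with $\pi_n(C_r^*(G,F))$). Your write-up simply makes explicit the routine verifications the paper leaves implicit, namely that the compressions by $p_F$ and $p_n$ commute with the $W_{\sigma_n}$-action because the normalized intertwiners and $T(w)$ are $K$-equivariant.
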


Theorem \ref{thm: Jsig minK} is the most technical, and is a key consequence of Delorme's techniques. We will leave this theorem to Section \ref{sec: deferredproof}, and prove the other three theorems here.

We need to rewrite Theorem \ref{thm: PW on lowktypes}  in a form compatible with \cite[(1.38)]{delormePW}. This will also allow us to understand the action of $W_\sigma$ on $\PW(\fa^*, \End(p_n I_\sigma))$. Recall from Theorem \ref{thm: WandRGroups} that $W_\sigma$ decomposes as a semidirect product
\[W_\sigma = R_\sigma W_\sigma^0,\]
where $W_\sigma^0$ is characterized by the property that $\cA(P_n, w, \lambda)$ is the identity, and (among other properties) $R_\sigma$ is a product of copies of $\bZ/2$. Moreover, the characters $\widehat{R}_\sigma$ act on $\minK{\sigma}$ simply transitively. We also recall from Definition \ref{defn: hatr mu nu} and Theorem \ref{thm: hatr mu nu} that the intertwining operators $\cA(P,w,\lambda)$ acts by scalars $a^{\mu}(w)$ on $p_{\mu} I_\sigma$ for each $\mu \in \minK{\sigma}$, and $\hat{r}_{\mu\nu}(w) := a^{\mu}(w)(a^{\nu}(w))^{-1}$ is a character of $R_\sigma$, and moreover is the unique element of $\widehat{R}_\sigma$ such that $\hat{r}_{\mu\nu} \cdot \nu = \mu$.

By the above, each $w = w^0 r\in W_\sigma$ (where $w^0 \in W_\sigma^0$, $r \in R_\sigma$) acts on $f \in p_{\mu} \PW(\fa^*, \End(p_{A(\sigma)}I_\sigma))p_{\nu}$ by
\[(w \cdot f)(\lambda) = \hat{r}_{\mu\nu}(r)f((w^0)^{-1} \lambda).\]
Therefore, if $\PW(\fa)^{W_\sigma^0}(\hat{r}_{\mu\nu})$ denotes the space of $W_\sigma^0$-invariant functions $f$ such that $f(r\lambda) = \hat{r}_{\mu\nu}(r) f(\lambda)$ for each $r \in R_\sigma$, then
\begin{equation}\label{eq: minKfulldecomp}
	p_{\mu} \PW(\fa^*, \End(p_{\minK{\sigma}}I_\sigma))^{W_\sigma} p_\nu = \PW(\fa^*)^{W_\sigma^0}(\hat{r}_{\mu\nu}) \otimes \Hom(p_\nu I_\sigma, p_\mu I_\sigma).
\end{equation}
In particular, Theorem \ref{thm: PW on lowktypes} is equivalent to the identification
\begin{equation}\label{eq: minkKfulldecompactual}
	p_\mu \pi_n(J)_n p_\nu =\PW(\fa^*)^{W_\sigma^0}(\hat{r}_{\mu\nu}) \otimes \Hom(p_\nu I_\sigma, p_\mu I_\sigma).
\end{equation}
Our discussion also proves that the action of $W_\sigma$ on $\PW(\fa^*, \End(p_n I_\sigma))$ is induced by its action on $\fa^*$ and a diagonal action on $\End(p_n I_\sigma))$ (with respect to the entries).

\begin{proof}[Proof of Theorem \ref{thm: PW on lowktypes}]	
	The proof of \eqref{eq: minkKfulldecompactual} is identical to that of \cite[(1.38)]{delormePW} when adapted to $\cS(G)$. To adapt the proof to $\cS(G)$, we use \cite[Theorem 3]{delormeflensted} instead of \cite[Theorem 2]{delormeflensted}, and we use Lemmas \ref{lem: polydivision} and \ref{lem: hcparamrestrictionthing} in place of \cite[(1.21), (1.37)]{delormePW}. As \eqref{eq: minkKfulldecompactual} is equivalent to the theorem, we are done.
\end{proof}

To prove Theorems  \ref{thm: Cstar mink} and \ref{thm: Cstar on lowktypes}, we need the following. Recall that $(\pi^P_{\sigma,\lambda}, \cH_\sigma)$ denotes the (principal series) Hilbert space representation of $G$ whose $(\fg, K)$-module is $(\pi^P_{\sigma,\lambda}, I_\sigma)$.

\begin{thm}[See {\cite[Propositions 5.17, 6.7 and Theorem 6.8]{clarecrisphigson}}]\label{thm: cstar subquo}
	Define $\pi_n: C_r^*(G) \to C_0(i\fa^*_0, \cB(\cH_\sigma))$ by $\pi_n(\phi)(\lambda) = \pi^{P_n}_{\sigma_n,\lambda}(\phi)$. The $C^*$-algebra homomorphism
	\[\bigoplus_{n\in \bN} \pi_n: C_r^*(G) \to \bigoplus_{n \in \bN} \pi_n(C_r^*(G))\]
	is an isomorphism. Consequently, the map $J_n/J_{n-1} \to \pi_n(C_r^*(G,F))$ is an isomorphism of $C^*$-algebras. Moreover, for each cuspidal pair $(P, \sigma)$,
	\begin{equation}\label{eq: CCHtheory}
		\pi_{\sigma}(C_r^*(G)) = C_0(i\fa^*_0, \cK(\cH_{\sigma}))^{W_\sigma},
	\end{equation}
	where $\cK(\cH_\sigma)$ denotes compact operators on $\cH_\sigma$, and each $w \in W_\sigma$ acts on $f \in C_0(i\fa^*_0, \cK(\cH_\sigma))$ via $(w \cdot f)(\lambda) = \cA(P, w, w^{-1}\lambda)f(w^{-1}\lambda)\cA(P, w^{-1}, \lambda)$.
\end{thm}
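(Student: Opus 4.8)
\emph{This is essentially \cite[Propositions 5.17, 6.7 and Theorem 6.8]{clarecrisphigson}, which in practice I would simply quote; here is the shape of the argument, which rests on the Harish-Chandra Plancherel theorem.} By that theorem, the left-regular representation of $G$ on $L^2(G)$ decomposes as a direct integral over $\lambda \in i\fa^*_0$ of the unitary principal series $\pi^{P_n}_{\sigma_n,\lambda}$ of all the cuspidal pairs $(P_n,\sigma_n)$, so $\ker(\bigoplus_n\pi_n)$ is contained in the kernel of the regular representation; hence $\bigoplus_n\pi_n$ is injective on $C_r^*(G)$, and being an injective $*$-homomorphism of $C^*$-algebras it is isometric with closed range. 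Its image lies in the $C^*$-algebraic ($c_0$-)direct sum because the $K\times K$-finite elements form a dense $*$-subalgebra and a $K\times K$-finite element $\phi$ is killed by $\pi_n$ as soon as $\|\sigma_n\|$ exceeds the lengths of the (finitely many) $K$-types of $\phi$, which happens for all large $n$ since only finitely many $K$-types have length below any given bound. Finally, surjectivity onto $\bigoplus_n\pi_n(C_r^*(G))$ follows because the Plancherel inversion formula supplies, on each connected component of the tempered dual, enough $L^1$-functions for $\pi_n$ to carry the ideal $\bigcap_{m\ne n}\ker\pi_m$ onto $\pi_n(C_r^*(G))$; the images of these ideals lie in the (closed) range and are together dense in the $c_0$-sum. (Here one uses that distinct classes $[\sigma_n]$ index disjoint clopen pieces of the tempered dual, Theorem~\ref{thm: voganthingo}.)

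The assertion about $J_n/J_{n-1}$ is then formal: compressing the isomorphism $C_r^*(G)\cong\bigoplus_m\pi_m(C_r^*(G))$ by $p_F$ gives $C_r^*(G,F)\cong\bigoplus_m\pi_m(C_r^*(G,F))$, under which the ideal $J_n=\bigcap_{m>n}\ker\pi_m$ is the partial sub-sum over $m\le n$; so $J_n/J_{n-1}$ is the $n$-th summand $\pi_n(C_r^*(G,F))$ and the map induced by the quotient is a $*$-isomorphism.

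Now fix a cuspidal pair $(P,\sigma)$. The inclusion $\pi_\sigma(C_r^*(G))\subseteq C_0(i\fa^*_0,\cK(\cH_\sigma))^{W_\sigma}$ is the easy half: on the dense $*$-subalgebra of $K\times K$-finite $\phi\in C_c^\infty(G,F)\subseteq\cS(G,F)$, Lemma~\ref{lem: fourier transform S(G)} gives $\pi_\sigma(\phi)\in\PW(\fa^*,\End(p_FI_\sigma))$, whose restriction to $i\fa^*_0$ consists of finite-rank operators decaying rapidly in $\lambda$, hence lies in $C_0(i\fa^*_0,\cK(\cH_\sigma))$; and the $W_\sigma$-invariance is the direct computation using that, for $w\in W_\sigma$, the normalized intertwiner $\cA(P,w,\lambda)$ conjugates $\pi^P_{\sigma,\lambda}$ into $\pi^P_{\sigma,w\lambda}$ and is unitary on $i\fa^*_0$, together with the cocycle relations among the $\cA(P,w,\lambda)$ (Theorem~\ref{thm: norminterwiner}); one then takes norm-closures. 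The reverse inclusion is the heart of the matter: at a generic $\lambda$, where $\pi^P_{\sigma,\lambda}$ is irreducible, Kadison transitivity gives $\pi^P_{\sigma,\lambda}(C_r^*(G))=\cK(\cH_\sigma)$; at a reducible $\lambda$, $\pi^P_{\sigma,\lambda}$ decomposes as in Theorem~\ref{thm: voganclassification} and the image of $C_r^*(G)$ is the block-diagonal subalgebra whose blocks are grouped exactly by the $\widehat{R}_\sigma(\lambda)$-orbits on $\minK{\sigma}$; a local-to-global (Stone--Weierstrass-type) argument for the continuous field $\lambda\mapsto\cK(\cH_\sigma)$ over $i\fa^*_0$, using the $R$-group description (Theorem~\ref{thm: WandRGroups}) and the non-vanishing of the Plancherel density on $i\fa^*_0$, then forces the closed $*$-subalgebra $\pi_\sigma(C_r^*(G))$ to exhaust $C_0(i\fa^*_0,\cK(\cH_\sigma))^{W_\sigma}$.

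The only genuinely hard step is this last reverse inclusion: identifying $\pi_\sigma(C_r^*(G))$ with the \emph{full} invariant algebra requires precise information about where the unitary principal series becomes reducible and about how the $W_\sigma$-action --- through the $R$-group and the scalars $a^\mu(w)$ of Definition~\ref{defn: hatr mu nu} --- glues the fibres. This is exactly \cite[Theorem 6.8]{clarecrisphigson}, and it uses the full strength of the Harish-Chandra Plancherel theorem and of Knapp--Stein intertwining theory.
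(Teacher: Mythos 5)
The paper itself offers no proof of this theorem --- it is a direct citation of \cite[Propositions 5.17, 6.7 and Theorem 6.8]{clarecrisphigson} --- and your proposal correctly does the same, supplementing the citation with a reasonable sketch of the Clare--Crisp--Higson argument. Your sketch's structure (Plancherel injectivity, $c_0$-sum via $K\times K$-finiteness and the length bound from Theorem~\ref{thm: voganthingo}, block analysis at reducible $\lambda$ via the $R$-group) matches the cited proof, so there is no gap.
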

Theorem \ref{thm: Cstar on lowktypes} follows by multiplying $p_n$ to the left and right in \eqref{eq: CCHtheory}.

\begin{proof}[Proof of Theorem \ref{thm: Cstar mink}]
	It suffices to prove that
	\[\cI := \pi_n(C_r^*(G)p_{n}C_r^*(G))\] is dense in $\pi_n(C_r^*(G))$, because then $p_F \cI p_F = \pi_{n}((J_n/J_{n-1})p_{n}(J_n/J_{n-1}))$ is dense in $p_F\pi_n( C_r^*(G))p_F = \pi_n(J_n/J_{n-1})$. 
	
	Suppose $\cI$ is not dense in $\pi_n(C_r^*(G))$. Then there is an irreducible representation of $\pi_n(C_r^*(G))$ which vanishes on $\cI$ (this is a consequence of \cite[Proposition 2.11.2 (i)]{dixmiercstar}).  
	
	By Theorem \ref{thm: cstar subquo}, every irreducible representation of $\pi_n(C_r^*(G))$ is an irreducible subquotient of the representation $(\pi^{P_n}_{{\sigma_n},\lambda},\cH_{\sigma_n})$ for some $\lambda \in i\fa^*_0$. Vogan's classification \eqref{eq: voganclassification} implies that a minimal $K$-type must be contained in this subquotient, and therefore this representation does not vanish on $\cI$, proving the claim.
\end{proof}

\section{Proof of Theorem \ref{thm: mainthm}}

We now turn to the proof of Theorem \ref{thm: mainthm}, which states that $\cS(G,F) \to C_r^*(G,F)$ induces an isomorphism in $K$-theory. It suffices to prove that the mapping cone of this inclusion vanishes in $K$-theory. The proof begins by studying the inclusion maps $\cJ_n/\cJ_{n-1} \to J_n/J_{n-1}$.

We will apply the results of Section \ref{sec: ktheory} regarding Fr{\'e}chet algebra $K$-theory and Morita equivalence. In order to do so, we must first complete the space $\cJ_n/\cJ_{n-1}$ to suitable Banach algebras. We recall that the topology of $\cS(G)$ is generated by the seminorms
\[\|\phi\|_{\cS(G),N,k} = \sum_{|I|, |J| \leq k} \int_G (1+\|g\|)^N|L_{X^I} R_{X^J}\phi(g)| dg,\]
for some fixed choice of orthonormal basis $X_1, \ldots, X_{\dim G}$ of $\fg$. Now, the action of $K$ is continuous with respect to the seminorms $\|\cdot\|_{\cS(G),N,k}$, and the kernel of $\|\cdot\|_{\cS(G),N,k}$ is a $K$-submodule of $\cS(G)$. Therefore, $p_F$ extends to a multiplier of the corresponding completions $\cS_{N,k}(G)$. We may also complete $\cJ_n/\cJ_{n-1}$ with respect to the corresponding (sub)quotient seminorms, and $p_F$ extends to these completions too. In particular, Theorem \ref{thm: Jsig minK} implies that $p_F$ is a full idempotent of $\cJ_n/\cJ_{n-1}$.

On the Fourier transform side, recall that we also have norms on $\PW(\fa^*, V)$ for a fixed finite-dimensional normed space $V$. Given $k > 0$ and each $N \in \bN \cup \{0\}$, we set $X = \{\lambda \in \fa^*: | \Re \lambda| \leq k\}$ and
\begin{equation}\label{eq: norm on PW}
	\|f\|_{N,k} = \|f\|_{N,X} = \sup_{\lambda \in X} (1+|\lambda|)^N \|f(\lambda)\|_V. 
\end{equation}

These norms generate the topology of $\PW(\fa^*, V)$.
\begin{lem}\label{lem: banachcompletePW}
	Let $W$ be a finite subgroup of the orthogonal group $O(\fa^*_0)$, and suppose the action of $W$ on $\PW(\fa^*,V)$ has the form
	\[(w \cdot f)(\lambda)= D(w)f(w^{-1}\lambda),\]
	where $D(w) \in \Aut(V)$ is independent of $\lambda$. Then the completion of $\PW(\fa^*,V)^W$ with respect to the norm $\|\cdot\|_{N,X}$ defined by \eqref{eq: norm on PW} is isomorphic to
	\begin{align*}
		\PW_N(X,V)^W &= \{f: X \to V: \text{\emph{ $(1+|\lambda|)^Nf \in C_0(X)$, $D(w)f(w^{-1}\lambda) = f(\lambda)$,}} \\
		&\text{\emph{$f$ is continuous on $X$, holomorphic on the interior of $X$}}\}.
	\end{align*}
\end{lem}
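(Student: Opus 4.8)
The plan is to exhibit $\PW_N(X,V)^W$ as a Banach space into which $\PW(\fa^*,V)^W$ embeds isometrically, for the norm $\|\cdot\|_{N,X}$, with dense image; it is then the desired completion. First I would verify completeness: the map $f\mapsto(1+|\lambda|)^Nf$ identifies $\PW_N(X,V)^W$ (with $\|\cdot\|_{N,X}$) with a linear subspace of $(C_0(X,V),\|\cdot\|_\infty)$, and this subspace is closed, because $\|\cdot\|_{N,X}$-convergence forces local uniform convergence on the interior of $X$ (there the weight is bounded above and below on compacta), so holomorphy on the interior survives, while the relation $D(w)f(w^{-1}\lambda)=f(\lambda)$ is visibly stable under uniform limits. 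Second, restriction to $X$ defines a map $\PW(\fa^*,V)^W\to\PW_N(X,V)^W$ which is well defined (a $\PW$-estimate of order $N+1$ gives $(1+|\lambda|)^N\|g(\lambda)\|\to0$ on $X$), isometric for $\|\cdot\|_{N,X}$ by definition, and injective because $X$ has nonempty interior, so a holomorphic function on $\fa^*$ vanishing on $X$ is identically zero.

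The content of the lemma is the density of this image, and I would approach it through two reductions. First, to the case $V=\bC$ and $W$ trivial: a choice of basis of $V$ reduces $V$ to $\bC$ up to equivalence of norms, and for general $W$ one averages — if $g\in\PW(\fa^*,V)$ approximates $f$ in $\|\cdot\|_{N,X}$ with the equivariance ignored, then $|W|^{-1}\sum_{w\in W}w\cdot g$ lies in $\PW(\fa^*,V)^W$ and approximates $|W|^{-1}\sum_w w\cdot f=f$, since the $W$-action is uniformly bounded for $\|\cdot\|_{N,X}$ (here $w\in O(\fa^*_0)$ gives $|w^{-1}\lambda|=|\lambda|$ and $w^{-1}X=X$, and $\|D(w)\|<\infty$) and preserves $\PW(\fa^*,V)$. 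Second, to the case where $f$ is holomorphic on a tube $\{|\Re\lambda|<k'\}$ strictly larger than $X=\{|\Re\lambda|\le k\}$: replace $f$ by $f_t(\lambda)=f(t\lambda)$ with $t\uparrow1$, which is holomorphic on $\{|\Re\lambda|<k/t\}\supset X$, is again $W$-equivariant, and converges to $f$ in $\|\cdot\|_{N,X}$: writing $g=(1+|\lambda|)^Nf\in C_0(X)$ one has $(1+|\lambda|)^Nf_t(\lambda)=\big((1+|\lambda|)/(1+t|\lambda|)\big)^N g(t\lambda)$, the prefactor tends uniformly to $1$, and $g(t\,\cdot)\to g$ uniformly on $X$ (treat a large compact set and its complement separately, using $g\in C_0$).

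It then remains to approximate such an $f$ — holomorphic and of order-$N$ polynomial decay on the wider tube $\{|\Re\lambda|<k'\}$ — by genuinely entire functions with rapid decay of all orders on every tube, and this is where I expect the real work to be. One cannot simply multiply $f$ by a rapidly decreasing entire ``bump'', since no entire function is simultaneously close to $1$ on a tube and rapidly decreasing there; the domain of holomorphy must actually be enlarged. My approach would be to pass to the Fourier–Laplace side: letting $\widehat f$ denote the Fourier transform of $y\mapsto f(iy)$, one has $f(\lambda)=c\int_{\fa_0}\widehat f(s)\,e^{\lambda(s)}\,ds$, and shifting the contour in the integral defining $\widehat f$ — legitimate precisely because $f$ is holomorphic and decaying on the \emph{strictly larger} tube — shows $|\widehat f(s)|\lesssim e^{-k''|s|}$ for every $k''<k'$, so the integral converges and (by the identity theorem) represents $f$ on $\{|\Re\lambda|<k'\}$. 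Then approximate $\widehat f$ by smooth compactly supported $\widehat f_j$ in a weighted norm such as $\sup_s e^{(k+1)|s|}\sum_{|\alpha|\le N+\dim\fa_0+1}|\partial^\alpha(\widehat f-\widehat f_j)(s)|$ (possible by multiplying by dilated cut-offs, using the exponential decay of $\widehat f$ and its derivatives, after arranging $k'>k+1$ in the scaling step): each $f_j(\lambda)=c\int_{\fa_0}\widehat f_j(s)\,e^{\lambda(s)}\,ds$ is entire and, by repeated integration by parts in $s$, decays to all orders on every tube, so $f_j\in\PW(\fa^*)$; and the same integration by parts shows $\|f_j-f\|_{N,X}$ is dominated by the weighted norm of $\widehat f-\widehat f_j$, hence tends to $0$. (To avoid integrability subtleties for small $N$ one can instead represent $f$ on $X$ by a Cauchy integral over $\{|\Re\lambda|=k''\}$, $k<k''<k'$, expand the Cauchy kernel as a Laplace integral $\tfrac1{\zeta-\lambda}=\int_0^\infty e^{-t(\zeta-\lambda)}\,dt$, and truncate the $t$-integral smoothly; the smoothness of the truncation is what forces the resulting entire approximants to decay rapidly in $\lambda$.) The delicate part throughout is obtaining these $\lambda$-uniform decay bounds, and the reduction to a strictly larger tube is exactly what supplies the room they require.
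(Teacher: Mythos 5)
Your setup — completeness of $\PW_N(X,V)^W$, isometric embedding of $\PW(\fa^*,V)^W$ by restriction, reduction to $V=\bC$ and trivial $W$ by averaging — is correct, and the dilation $f_t(\lambda)=f(t\lambda)$, $t\uparrow 1$, which buys holomorphy on a tube strictly larger than $X$, is a genuine reduction that the paper does not use. For the density itself, however, your route diverges from the paper's and, as written, contains a gap.

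The paper proves density in two steps (Lemmas~\ref{lem: banachcompletePWsub1} and \ref{lem: banachcompletePWsub2}), both staying inside the world of holomorphic functions on tubes. It first multiplies $f$ by the entire function $e^{s\lambda^2}$, where $\lambda^2=\sum_j\lambda_j^2$: on the tube, $\Re\lambda^2=|\Re\lambda|^2-|\Im\lambda|^2$, so $e^{s\lambda^2}$ has super-polynomial decay in $\Im\lambda$ and tends to $1$ uniformly on compacta as $s\downarrow 0$. This upgrades an element of $\PW_N(X,V)^W$ to one of $\bigcap_M\PW_M(X,V)^W$ without leaving $X$. It then convolves along $i\fa_0^*$ against the Gaussian $\varphi_t(\lambda)=(\pi t)^{\dim\fa/2}e^{t\lambda^2}$: because $\varphi_t$ is entire and super-polynomially decreasing, the convolution lies in $\PW(\fa^*,V)$, and a contour shift (legitimate precisely because of the decay arranged in step one) shows convergence as $t\to\infty$. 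The two steps are in strict series: the decay bump is what makes every integral and contour shift in the second step converge without fuss.

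Your Fourier--Laplace route skips the decay bump, and that is where it breaks. After dilation you only gain holomorphy on $\{|\Re\lambda|<k/t\}$ with $k/t$ close to $k$ (you cannot ``arrange $k'>k+1$'' while keeping $t$ near $1$), so the Fourier weight must be $e^{k''|s|}$ for some $k<k''<k/t$, not $e^{(k+1)|s|}$. More seriously, the contour-shifted bound
\[
\bigl|\partial^\alpha_s\widehat f(s)\bigr|\lesssim e^{-\langle x,s\rangle}\int_{\fa^*_0}(1+|y|)^{-N}\,(1+|x|+|y|)^{|\alpha|}\,dy
\]
converges only when $|\alpha|<N-\dim\fa_0$; it fails entirely for $|\alpha|\le N+\dim\fa_0+1$, and fails for every $\alpha$ once $N\le\dim\fa_0$. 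Your Cauchy-kernel fallback is one-dimensional as stated: $X$ is a Euclidean tube, and for $\dim\fa_0>1$ you would need a Cauchy-type kernel for the tube over a ball, which is much heavier machinery than the lemma warrants. If you insert the paper's first step --- multiply by $e^{s\lambda^2}$ to pass to $\bigcap_M\PW_M(X,V)^W$ --- then $\widehat f$ and all its derivatives decay rapidly after contour shift, the integrability issues vanish, and the remainder of your argument (compactly supported smooth approximants, integration by parts to recover $\lambda$-decay) goes through. Without that step, density is not established for small $N$.
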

The above lemma is a consequence of the next two lemmas.
\begin{lem}\label{lem: banachcompletePWsub1}
	With notation as in Lemma \ref{lem: banachcompletePW}, for each $N \geq 0$, the space $\bigcap_{M=0}^\infty \PW(X, V)^W$ is dense in $\PW_N(X,V)^W$ with respect to $\|\cdot\|_{N,X}$.
\end{lem}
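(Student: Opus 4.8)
The plan is to approximate a given $f \in \PW_N(X,V)^W$ by multiplying it with a family of $W$-invariant holomorphic ``damping factors'' $\chi_\epsilon$ which tend to $1$ on compacta while staying uniformly bounded; then $\chi_\epsilon f$ will be rapidly decreasing on $X$ (so it lies in $\PW_M(X,V)^W$ for every $M\ge 0$, i.e.\ in $\bigcap_{M\ge 0}\PW_M(X,V)^W$) and $\chi_\epsilon f \to f$ in $\|\cdot\|_{N,X}$. The only delicate point is exhibiting a damping factor that genuinely decays on $X=\{\lambda\in\fa^*:|\Re\lambda|\le k\}$: since this slab is bounded in the real directions and unbounded only in the imaginary ones, I would take the Gaussian $\chi_\epsilon(\lambda)=e^{\epsilon\langle\lambda,\lambda\rangle}$ for $\epsilon\in(0,1]$, where $\langle\cdot,\cdot\rangle$ is the complex-bilinear extension of the inner product on $\fa_0^*$. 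Writing $\lambda=\xi+i\eta$ with $\xi,\eta\in\fa_0^*$, one has $\Re\langle\lambda,\lambda\rangle=|\xi|^2-|\eta|^2$, hence $|\chi_\epsilon(\lambda)|=e^{\epsilon(|\Re\lambda|^2-|\Im\lambda|^2)}$, which on $X$ is bounded by $e^{k^2}$ and, more precisely, by $e^{k^2}e^{-\epsilon|\Im\lambda|^2}$; the sign of the exponent is what makes this work (one wants $+\epsilon$, not $-\epsilon$, exactly because the unbounded directions of $X$ are the imaginary ones). Moreover $\chi_\epsilon$ is entire, $\chi_\epsilon\to 1$ uniformly on compact subsets of $\fa^*$ as $\epsilon\to 0^+$, and, because $W\subset O(\fa_0^*)$ acts by isometries, $\langle w^{-1}\lambda,w^{-1}\lambda\rangle=\langle\lambda,\lambda\rangle$, so $\chi_\epsilon$ is $W$-invariant.

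With $\chi_\epsilon$ in hand, the first step is to check $\chi_\epsilon f\in\PW_M(X,V)^W$ for every $M$. Holomorphy on $\Int X$ and continuity on $X$ are inherited from $\chi_\epsilon$ and $f$; $W$-equivariance is immediate, since $D(w)(\chi_\epsilon f)(w^{-1}\lambda)=\chi_\epsilon(w^{-1}\lambda)\,D(w)f(w^{-1}\lambda)=\chi_\epsilon(\lambda)f(\lambda)$ using $W$-invariance of the scalar $\chi_\epsilon$ and $D(w)f(w^{-1}\lambda)=f(\lambda)$. For the decay, note that $f$ is bounded on $X$ because $(1+|\lambda|)^N\|f(\lambda)\|_V\in C_0(X)$ and $N\ge 0$; then, using $|\lambda|\le k+|\Im\lambda|$ on $X$, one gets $(1+|\lambda|)^M|\chi_\epsilon(\lambda)|\,\|f(\lambda)\|_V\le e^{k^2}\|f\|_\infty\,(1+k+|\Im\lambda|)^M e^{-\epsilon|\Im\lambda|^2}\to 0$ as $|\lambda|\to\infty$ in $X$, so $(1+|\lambda|)^M\chi_\epsilon f\in C_0(X)$.

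The second step is the convergence $\|f-\chi_\epsilon f\|_{N,X}\to 0$ as $\epsilon\to 0^+$, which is a standard split into a tail and a compact part. Put $g(\lambda)=(1+|\lambda|)^N\|f(\lambda)\|_V\in C_0(X)$, fix $\delta>0$, and choose $\Lambda$ with $g<\delta$ on $X\cap\{|\lambda|>\Lambda\}$. On that tail, $(1+|\lambda|)^N|1-\chi_\epsilon(\lambda)|\,\|f(\lambda)\|_V=|1-\chi_\epsilon(\lambda)|\,g(\lambda)\le(1+e^{k^2})\delta$ uniformly in $\epsilon\in(0,1]$; on the compact remainder $X\cap\{|\lambda|\le\Lambda\}$, uniform convergence $\chi_\epsilon\to 1$ makes $(\sup|1-\chi_\epsilon|)(\sup g)$ as small as we like once $\epsilon$ is small. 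Hence $\limsup_{\epsilon\to 0}\|f-\chi_\epsilon f\|_{N,X}\le(1+e^{k^2})\delta$ for all $\delta>0$, so $\chi_\epsilon f\to f$, proving the density. I do not expect a real obstacle here: the argument is a soft approximation, and the single genuine idea is the choice of the $W$-invariant Gaussian damping factor adapted to the geometry of the slab $X$ (in particular, the correct sign in the exponent).
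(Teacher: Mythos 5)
Your proof is correct and is essentially the same as the paper's: both multiply $f$ by the $W$-invariant Gaussian damping factor $e^{s\lambda^2}$ (with $\lambda^2=\langle\lambda,\lambda\rangle$ the complex-bilinear extension of the inner product, so that its real part is negative and grows in the unbounded imaginary directions of the slab), observe that the product lies in $\bigcap_M \PW_M(X,V)^W$, and then run the standard tail-plus-compact approximation argument. The only cosmetic difference is that you spell out the sign check on $\Re\langle\lambda,\lambda\rangle$ explicitly, where the paper just records $\lambda^2\le 0$ on $i\fa_0^*$.
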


\begin{proof}
	Fix an orthonormal basis $\{e_i\}$ on $\fa^*_0$ and set $\lambda^2 = \sum_i \langle \lambda, e_i\rangle^2 \in \bC$. That is, if we identify $\fa^*$ with $\bC^n$ and write $\lambda = (\lambda_1, \ldots, \lambda_n)$, then $\lambda^2 = \sum_i \lambda_i^2$. This quantity is independent of the basis chosen. Note that $\lambda^2 \leq 0$ when $\lambda \in i\fa^*_0$, and that $(w \cdot \lambda)^2 = \lambda^2$ for $w \in W$.
	
	Suppose $f \in \PW_N(X,V)^W$. Now, $g_s(\lambda) = e^{s\lambda^2}f(\lambda)$ is in $\bigcap_M\PW_M(X,V)^W$ for each $s > 0$. For any $\varepsilon > 0$ and any compact set $Y\subset X$ we may choose $s > 0$ such that $|e^{s\lambda^2} -1| < \varepsilon$ for $\lambda \in Y$, so that 
	\[(1+|\lambda|)^N\|g_s(\lambda)- f(\lambda)\|_V < \|f\|_{N,X} \varepsilon\]
	for $\lambda \in Y$. If we choose $Y$ large enough so that $(1+|\lambda|)^N \|f(\lambda)\|_V < \varepsilon$ when $\lambda \notin Y$, then we see that $g_s$ approximates $f$ in $\|\cdot\|_{N,X}$. This proves the lemma.
\end{proof}
\begin{lem}\label{lem: banachcompletePWsub2}
	With notation as in Lemma \ref{lem: banachcompletePW}, the space $\PW(\fa^*, V)^W$ is dense in $\bigcap_{M=0}^\infty \PW_M(X,V)^W$ with respect to $\|\cdot\|_{N,X}$ for each $N \geq 0$.
\end{lem}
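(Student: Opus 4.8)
The plan is to approximate a given $f \in \bigcap_M \PW_M(X,V)^W$ — which is holomorphic only on the \emph{interior} of the closed slab $X = \{\lambda : |\Re\lambda|\le k\}$ — by genuinely entire functions lying in $\PW(\fa^*,V)^W$, in three moves: first dilate $f$ so that it becomes holomorphic on an open neighbourhood of $X$; then express the dilate as an inverse Fourier--Laplace integral over $\fa_0$ and truncate the domain of integration to manufacture entire approximants; finally average over $W$ to restore equivariance.

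\emph{Dilation.} For $\delta>0$ I would set $f_\delta(\lambda) = f(\lambda/(1+\delta))$. Since $W$ acts $\bR$-linearly on $\fa_0^*$ and $D(w)$ is independent of $\lambda$, $f_\delta$ is again $W$-equivariant, and it is holomorphic on $\{|\Re\lambda| < k(1+\delta)\}$, a neighbourhood of $X$, with $\sup_{|\Re\lambda|\le k(1+\delta)}(1+|\lambda|)^M\|f_\delta(\lambda)\|_V < \infty$ for all $M$ (this is just the defining estimate for $f$, applied at $\lambda/(1+\delta)\in X$). I would then check $f_\delta\to f$ in $\|\cdot\|_{N,X}$ as $\delta\to 0$ by splitting $X$ into the compact part $\{|\lambda|\le T\}\cap X$, where uniform continuity of $f$ together with $|\lambda-\lambda/(1+\delta)|\le T\delta$ handles the estimate, and the part $|\lambda|>T$, where the $(N+1)$st order decay of $f$ makes both $(1+|\lambda|)^N\|f(\lambda)\|_V$ and $(1+|\lambda|)^N\|f_\delta(\lambda)\|_V$ uniformly small. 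So it suffices to approximate each fixed $f_\delta$ by elements of $\PW(\fa^*,V)^W$ in $\|\cdot\|_{N,X}$.

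\emph{Fourier--Laplace truncation.} Identify $\fa^*\cong\bC^n$ and $\fa_0\cong\bR^n$. The restriction $\xi\mapsto f_\delta(i\xi)$ is a $V$-valued Schwartz function on $\fa_0^*$; let $u$ be its inverse Fourier transform on $\fa_0$. Shifting the contour from $i\fa_0^*$ to $\mu+i\fa_0^*$ (Cauchy's theorem, valid by holomorphy and rapid decay of $f_\delta$ on the slab of width $k(1+\delta)$) yields, for $|\mu|<k(1+\delta)$,
\[
	f_\delta(\mu+i\xi) = c\int_{\fa_0} u(t)\,e^{-\langle\mu,t\rangle}\,e^{-i\langle\xi,t\rangle}\,dt, \qquad \|u(t)\|_V \le C\,e^{-k(1+\delta/2)|t|},
\]
and applying the same argument to the rapidly decreasing functions $\lambda\mapsto\lambda^\alpha f_\delta(\lambda)$ shows every derivative $\partial^\alpha u$ obeys the same exponential bound. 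Now fix $\chi\in C_c^\infty(\fa_0)$ with $\chi\equiv 1$ near $0$, set $u_R(t)=\chi(t/R)u(t)$, and let $g_R(\lambda)=c\int_{\fa_0}u_R(t)\,e^{-\langle\lambda,t\rangle}\,dt$, the bracket extended $\bC$-bilinearly. Since $u_R$ is smooth with compact support, $g_R$ is entire and, integrating by parts in $t$, $\sup_{|\Re\lambda|\le\kappa}(1+|\lambda|)^M\|g_R(\lambda)\|_V<\infty$ for all $\kappa,M$, so $g_R\in\PW(\fa^*,V)$. For $\lambda=\mu+i\xi\in X$ one has $g_R(\lambda)-f_\delta(\lambda)=c\int_{|t|\ge R}(\chi(t/R)-1)u(t)e^{-\langle\lambda,t\rangle}\,dt$; bounding $(1+|\lambda|)^N$ by powers of $|\mu|\le k$ and of $|\xi|$, and moving the $\xi$-powers onto the integrand by $N$ integrations by parts, I obtain
\[
	\|g_R-f_\delta\|_{N,X} \le C'\int_{|t|\ge R}(1+|t|)^N\,e^{k|t|}\,e^{-k(1+\delta/2)|t|}\,dt = C'\int_{|t|\ge R}(1+|t|)^N\,e^{-\tfrac{k\delta}{2}|t|}\,dt \longrightarrow 0
\]
as $R\to\infty$. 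The surviving factor $e^{-k\delta|t|/2}$ is exactly what the dilation step bought; without it the weight $e^{k|t|}$ coming from evaluation on the width-$k$ slab could not be absorbed.

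\emph{Symmetrization, and the main obstacle.} The $g_R$ need not be $W$-equivariant, so I would replace $g_R$ by $\tilde g_R=\frac{1}{|W|}\sum_{w\in W}w\cdot g_R\in\PW(\fa^*,V)^W$ (the $W$-action preserves entirety and, since each $w$ maps $X$ to itself isometrically in $\lambda$, preserves the $\PW$-estimates). Since $f_\delta$ is $W$-equivariant and $\|w\cdot h\|_{N,X}\le\|D(w)\|_{\mathrm{op}}\|h\|_{N,X}$, we get $\|\tilde g_R-f_\delta\|_{N,X}\le(\max_w\|D(w)\|_{\mathrm{op}})\,\|g_R-f_\delta\|_{N,X}\to 0$, and the three steps together prove the lemma. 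The step I expect to demand the most care is the Fourier--Laplace truncation: establishing the representation of $f_\delta$ on the wider slab and, crucially, the sharp exponential decay $\|u(t)\|_V\lesssim e^{-k(1+\delta/2)|t|}$ for $u$ and all its derivatives, since the whole convergence argument rests on this rate \emph{strictly} beating the $e^{k|t|}$ weight — the dilation being introduced solely to make that inequality strict.
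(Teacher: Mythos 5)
Your argument is correct, but it follows a genuinely different route from the paper's. The paper mollifies directly: it sets $\varphi_t(\lambda) = (\pi t)^{\dim\fa/2}e^{t\lambda^2}$ and forms the convolution $h_t(\lambda) = \int_{i\fa_0^*} f(x)\varphi_t(\lambda - x)\,dx$, then uses a contour shift plus the usual approximate-identity split (small $|x|$ via uniform continuity, large $|x|$ via concentration of $\varphi_t$) to show $h_t \to f$ in $\|\cdot\|_{N,X}$, and finally averages over $W$. Because the Gaussian decays super-exponentially in $\Im\lambda$ on any tube, $h_t$ is automatically entire with the Paley--Wiener estimates on every slab, and there is no need for a preliminary dilation. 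Your proof instead passes to the Fourier--Laplace side: dilate to gain room, represent $f_\delta$ by a kernel $u$ with exponential decay strictly faster than $e^{k|t|}$, truncate $u$ by a compactly supported cutoff to manufacture entire approximants $g_R$, and then symmetrize. The trade is clean: your approach needs the dilation step precisely because a purely exponential decay rate has to beat the $e^{k|t|}$ weight coming from $|\Re\lambda| \le k$, whereas the paper's Gaussian beats any exponential for free; conversely, your route makes the mechanism of the estimate (and where each factor of $\delta$ is spent) very explicit, which some readers may find clarifying. Both correctly handle the $W$-equivariance by averaging at the end, and both use the same contour-shifting input. One minor remark: in the dilation step you should note, as you implicitly do, that the contour shift and the decay estimate for $u$ should be carried out strictly inside $\{|\Re\lambda| < k(1+\delta)\}$ (say at $|\mu| \le k(1+\delta/2)$), since $f_\delta$ is only continuous, not holomorphic, on the boundary of that wider slab.
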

\begin{proof}
	As in the previous lemma, we fix an orthonormal basis $\{e_i\}$ on $\fa^*_0$ and set $\lambda^2 = \sum_i \langle \lambda, e_i\rangle^2$.
	
	Fix $f \in \bigcap_M\PW_M(X,V)^W$. We define, for each $t > 0$, $\varphi_t(\lambda) = (\pi t)^{(\dim \fa)/2}e^{t\lambda^2}$ and
	\begin{equation}\label{eq: mollifiedPW}
		h_t(\lambda) = \int_{i\fa^*_0} f(x) \varphi_t(\lambda - x)dx.	
	\end{equation}
	Note that $\int_{i\fa^*} \varphi_t(x) dx = 1$ for each $t >0$.
	
	First, we prove that $h_t \in \PW(\fa^*, V)$. Indeed, for $M > 0$, via $(1+|\lambda|)^M \leq (1+|\lambda - x|)^M(1+|x|)^M$,
	\[\int_{i\fa^*_0} (1+|\lambda|)^M \|f(x)\|_V |\varphi_t(\lambda - x)|dx \leq C(\lambda)\|f\|_{N,X} \|\varphi\|_{N+2\dim\fa,|\Re \lambda|} < \infty,\]
	where $C(\lambda) = \int_{i\fa^*_0}(1+|\lambda - x|)^{-2\dim\fa}dx$, which is bounded in $\lambda$ when $|\Re \lambda|$ is bounded. It also follows that $h_t$ is holomorphic on $\fa^*$ because we can pass the derivative through the integral. This proves that $h_t \in \PW(\fa^*, V)$.
	
	Now we prove that $h_t$ approximates $f$. Fix $\varepsilon \in (0,1)$ and choose $\delta \in (0, \varepsilon)$ such that
	\[\|(1+|\lambda|)^Nf(z) - (1+|\lambda - x|)^Nf(\lambda -x)\|_V < \varepsilon\]
	whenever $|x| < \delta$, $x \in i\bR$, and $\lambda \in X$. Then, for each $\lambda \in X$ and $|x| < \delta$, by noting $(1+|\lambda|)^N = ((1+|\lambda-x|)+(|\lambda| - |\lambda-x|))^N$ and applying binomial expansion, we get
	\[(1+|\lambda|)^N\|f(\lambda)-f(\lambda-x)\|_V < (1+2^N \|f\|_{N,X})\varepsilon.\]
	%	by applying Peetre's inequality,
	%	\begin{align*}
	%		(1+|\lambda|)^N\|f(\lambda)-f(\lambda-x)\|_V & \leq \varepsilon + \left(1 - \frac{(1+|\lambda|)^N}{(1+|\lambda-x|)^N}\right) \|f\|_{N,X}	\\
	%		&< (1+2^N \|f\|_{N,X})\varepsilon.
	%	\end{align*}

	We will prove an estimate for $(1+|\lambda|)^N\|f(\lambda) - h_t(\lambda)\|_V$ for $\lambda$ in the interior of $X$. In this case, the integrand $f(x)\varphi_t(\lambda - x)$ is a holomorphic function of $x$ on a domain containing $s\lambda + i\fa^*_0$ for each $s \in [0,1]$. Therefore, we may shift the contour, so that
	\[\int_{i\fa^*_0} f(x) \varphi_t(\lambda - x)dx = \int_{\lambda+i\fa^*_0}f(x)\varphi_t(\lambda-x)dx = \int_{i\fa^*_0}f(\lambda-x)\varphi_t(x)dx.\]
	
	Our previous estimate proves
	\[\int_{\substack{x \in i\bR,\\|x| < \delta} } (1+|\lambda|)^N \|f(\lambda) - f(\lambda - x)\|_V|\varphi_t(x)| dx < (1+2^N \|f\|_{N,X})\varepsilon.\]
	Now, for large $t  > 1$, the fact that $(1+|x|)^N\varphi_1(x)$ is integrable implies
	\[\int_{\substack{x \in i\bR,\\|x| >\delta} } (1+|x|)^N |\varphi_t(x)|dx \leq\int_{\substack{x \in i\bR,\\|x| > t\delta} }(1+|x|)^N |\varphi_1(x)|dx < \varepsilon.\]
	Therefore, using $(1+|\lambda|) \leq (1+|\lambda -x|)(1+|x|)$,
	\[\int_{\substack{x \in i\bR,\\|x| >\delta} }(1+|\lambda|)^N \|f(\lambda) - f(\lambda - x)\|_V|\varphi_t(x)| < 2\|f\|_{N,X} \varepsilon.\]
	We have shown, for large $t$,
	\[\|(1+|\lambda|)^N (f - h_t)(\lambda)\|_{N,X} < (1+2^N\|f\|_{N,X})\varepsilon.\]
	
	The lemma follows by averaging $h_t$ with respect to the action of $W$.
\end{proof}
Lemma \ref{lem: banachcompletePW} follows immediately from Lemmas \ref{lem: banachcompletePWsub1} and \ref{lem: banachcompletePWsub2}.

In what follows, we will apply the above lemma to $V = \End(p_n I_\sigma)$. We have seen that $W_\sigma$ acts on $\PW(\fa^*, \End(p_n I_\sigma))$ as in the lemma.

\begin{prop}
	The inclusion of mapping cones
	\begin{align*}
		\MC(p_n(\cJ_n/\cJ_{n-1}) p_n \to p_n &(J_n/J_{n-1})p_n) \\&\hookrightarrow \MC(\cJ_n/\cJ_{n-1} \to J_n/J_{n-1})
	\end{align*}
	induces an isomorphism in $K$-theory. 
\end{prop}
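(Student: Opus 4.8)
We plan to deduce the statement from the two Morita-equivalence facts already at our disposal, plus a formal five-lemma argument. Write $\cA = \cJ_n/\cJ_{n-1}$ and $B = J_n/J_{n-1}$. First we record that the inclusion $\cS(G,F) \hookrightarrow C_r^*(G,F)$ carries $\cJ_{n-1}$ into $J_{n-1}$ and $\cJ_n$ into $J_n$, and hence induces a continuous homomorphism $f \colon \cA \to B$; since this inclusion is $K \times K$-equivariant, $f$ commutes with the left and right actions of $p_n$ and therefore restricts to $p_n f p_n \colon p_n \cA p_n \to p_n B p_n$. We thus obtain a commuting square of algebra inclusions with vertical arrows $p_n f p_n$ and $f$. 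Applying the mapping cone construction to this square produces a map of the short exact sequences $0 \to S(p_n B p_n) \to \MC(p_n f p_n) \to p_n \cA p_n \to 0$ and $0 \to S(B) \to \MC(f) \to \cA \to 0$ (as in the proof of Lemma \ref{lem: mappingconething}), whose middle vertical arrow is precisely the inclusion of mapping cones in the statement.

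Next we check that both outer vertical arrows of this map of sequences induce isomorphisms in $K$-theory. For $p_n \cA p_n \hookrightarrow \cA$: Theorem \ref{thm: Jsig minK} gives $\cA p_n \cA = \cA$, and the completion discussion preceding this proposition exhibits $p_n$ as a full idempotent of the Fr\'echet algebra $\cA$ in the precise inverse-limit-of-Banach-algebras sense, so Theorem \ref{thm: frechetmorita} applies. For $p_n B p_n \hookrightarrow B$: $B$ is a $C^*$-algebra with $\overline{B p_n B} = B$ by Theorem \ref{thm: Cstar mink}, so Theorem \ref{thm: banachmorita} applies; the induced map $S(p_n B p_n) \hookrightarrow S(B)$ is then an isomorphism in $K$-theory as well, by naturality of the suspension isomorphism $K_*(S(\,\cdot\,)) \cong K_{*+1}(\,\cdot\,)$. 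Finally, passing to the six-term exact sequences in $K$-theory attached to the two rows (Theorems 6.1 and 5.5 of \cite{phillipsktheory}, exactly as in Lemma \ref{lem: mappingconething}) yields a ladder of long exact sequences in which every vertical map except those at the $\MC$-terms is an isomorphism; since those terms occur every third step, the five lemma forces the $\MC$-maps, and in particular the inclusion of the statement, to be isomorphisms too.

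The argument is essentially formal, so I do not expect a serious obstacle within this proposition itself; the real content lies upstream, in Theorems \ref{thm: Jsig minK} and \ref{thm: Cstar mink}. The one point that needs care is checking that $p_n$ meets the technical definition of a full idempotent of the Fr\'echet algebra $\cA$ --- that it induces a compatible family $p_{n,m} \in M(\cA_m)$ of multipliers on the Banach completions $\cA_m$ of $\cA$ satisfying conditions (1)--(3) of that definition --- which is exactly why the completion paragraph before the proposition was needed. As an alternative, one could observe that $\MC(p_n f p_n)$ is literally $p_n \MC(f) p_n$ and that $p_n$ is a full idempotent of $\MC(f)$, and invoke Theorem \ref{thm: frechetmorita} a single time; this is slightly cleaner but requires re-establishing the full-idempotent conditions for $\MC(f)$, whereas the five-lemma route simply reuses what has already been proved for $\cA$ and $B$.
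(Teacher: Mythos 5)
Your proposal is correct and follows essentially the same route as the paper: both establish that the two outer inclusions $p_n(\cJ_n/\cJ_{n-1})p_n \hookrightarrow \cJ_n/\cJ_{n-1}$ and $p_n(J_n/J_{n-1})p_n \hookrightarrow J_n/J_{n-1}$ induce $K$-theory isomorphisms via the full-idempotent/Morita results, then compare the six-term exact sequences arising from $0 \to S(\cdot) \to \MC(\cdot) \to \cdot \to 0$ and conclude by the five lemma. Your final remark identifying $\MC(p_n f p_n) = p_n \MC(f) p_n$ as a cleaner alternative is also in the spirit of the paper's remark following Theorem \ref{thm: banachmorita}, so nothing is missing.
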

\begin{proof}
	By Theorems \ref{thm: Jsig minK} and \ref{thm: cstar subquo}, together with Theorem \ref{thm: frechetmorita}, the inclusions $p_n(\cJ_n/\cJ_{n-1})p_n \hookrightarrow \cJ_n$ and $p_n(J_n/J_{n-1})p_n \hookrightarrow J_n$ induce isomorphisms in $K$-theory. 
	
	Now write 
	\begin{align*}
		\MC &= \MC(\cJ_n/\cJ_{n-1} \to J_n/J_{n-1}),\\	\MC_{\text{reduced}} &=\MC(p_n(\cJ_n/\cJ_{n-1}) p_n \to p_n (J_n/J_{n-1})p_n).
	\end{align*}
	
	If we apply the $6$-term exact sequence appearing in the proof of Lemma \ref{lem: mappingconething} to $\MC$ and $\MC_{\text{reduced}}$, we obtain a morphism of exact sequences
	\[\small \xymatrix@C=1em{
		\cdots  \ar[r]& K_i(p_n J_n/J_{n-1}p_n ) \ar[d]^{\cong} \ar[r]& K_{i+1}(\MC_{\text{reduced}}) \ar[d]\ar[r]& K_{i+1}(p_n \cJ/\cJ_{n-1}p_n) \ar[d]^{\cong} \ar[r]& \cdots\\
		\cdots \ar[r]& K_i(J_n/J_{n-1}) \ar[r]& K_{i+1}(\MC) \ar[r]& K_{i+1}(\cJ/\cJ_{n-1})  \ar[r]& \cdots.}\]
	The proposition follows from the five-lemma.
\end{proof}

\begin{thm}\label{thm: subquotient Kthy iso}
	The map $\cJ_n/\cJ_{n-1} \to J_n/J_{n-1}$ induces an isomorphism in $K$-theory.
\end{thm}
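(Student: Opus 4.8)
The plan is to reduce the statement, by two applications of Lemma~\ref{lem: mappingconething} together with the preceding Proposition, to the assertion that a certain restriction map between a ``holomorphic'' Paley--Wiener algebra and a continuous function algebra is a $K$-theory isomorphism, and then to prove that assertion by an Oka-type homotopy argument.

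For the reduction, write $\MC := \MC(\cJ_n/\cJ_{n-1}\to J_n/J_{n-1})$ and $\MC_{\mathrm{red}} := \MC(p_n(\cJ_n/\cJ_{n-1})p_n\to p_n(J_n/J_{n-1})p_n)$. By Lemma~\ref{lem: mappingconething} the theorem is equivalent to $K_*(\MC)=0$; the preceding Proposition gives $K_*(\MC)\cong K_*(\MC_{\mathrm{red}})$; and Lemma~\ref{lem: mappingconething} applied once more shows that $K_*(\MC_{\mathrm{red}})=0$ if and only if the map $\Phi\colon p_n(\cJ_n/\cJ_{n-1})p_n\to p_n(J_n/J_{n-1})p_n$ induced by $\cJ_n/\cJ_{n-1}\hookrightarrow J_n/J_{n-1}$ is a $K$-theory isomorphism. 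I then unwind the identifications: since $\cJ_{n-1}\subseteq\ker\pi_n$, Theorem~\ref{thm: PW on lowktypes} identifies $p_n(\cJ_n/\cJ_{n-1})p_n$ with $\PW(\fa^*,V)^{W_{\sigma_n}}$, where $V:=\End(p_nI_{\sigma_n})$, while Theorems~\ref{thm: cstar subquo} and~\ref{thm: Cstar on lowktypes} identify $p_n(J_n/J_{n-1})p_n$ with $C_0(i\fa^*_0,V)^{W_{\sigma_n}}$. Because $\cJ_n\subseteq J_n$ and because on both sides the relevant map is $\phi\mapsto\big(\lambda\mapsto\pi^{P_n}_{\sigma_n,\lambda}(\phi)\big)$ --- with $\lambda$ running over $\fa^*$ on the Casselman side and over $i\fa^*_0$ on the reduced $C^*$ side, and with compatible actions of $W_{\sigma_n}$ --- the map $\Phi$ is identified with the restriction homomorphism
\[
r\colon \PW(\fa^*,V)^{W_{\sigma_n}}\longrightarrow C_0(i\fa^*_0,V)^{W_{\sigma_n}},\qquad r(f)=f|_{i\fa^*_0}.
\]
So the whole theorem reduces to showing that $r$ induces an isomorphism in $K$-theory; this is the Paley--Wiener analogue of the Oka principle sketched in the introduction (cf.\ \cite{2021braddhigson}).

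To handle $r$ I would first reduce its structure: using the decomposition~\eqref{eq: minKfulldecomp} and the semidirect-product factorization $W_{\sigma_n}=R_{\sigma_n}\ltimes W^0_{\sigma_n}$ (with $W^0_{\sigma_n}$ a reflection group that acts trivially on $V$), one peels off the finite-dimensional coefficient spaces and the $R_{\sigma_n}$-twists --- essentially a Morita bookkeeping, cf.\ Theorem~\ref{thm: frechetmorita} --- leaving the restriction map from a (possibly sign-twisted) space of $W^0_{\sigma_n}$-invariant holomorphic functions on $\fa^*$ to the corresponding space of continuous functions on $i\fa^*_0$. Passing to Chevalley coordinates puts the target in the shape $C_0(\bR^{d-r}\times[0,\infty)^r)$, and a K\"unneth-type argument then reduces everything to one complex variable, where one checks by hand that $\PW(\bC)\hookrightarrow C_0(i\bR)$ is the identity on $K_1\cong\bZ$ (the class of an invertible is its winding number, which restriction does not change) and that $K_0(\PW(\bC))=0$ (every idempotent over the unitization is conjugate, through the algebra, to a constant --- the genuinely Oka-type input), together with the $K$-triviality of $C_0([0,\infty))$. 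Reassembling via the six-term exact sequences then shows that $r$, and hence $\Phi$, and hence $\cJ_n/\cJ_{n-1}\to J_n/J_{n-1}$, is a $K$-theory isomorphism.

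The step I expect to be the main obstacle is exactly this last one. There is no naive deformation retraction available: $\PW(\fa^*,V)^{W_{\sigma_n}}$ is not a good Fr\'echet algebra (its invertibles need not form an open set, just as for $\cS(G,F)$), and holomorphicity is rigid --- an endomorphism of a pointwise-multiplication holomorphic function algebra is a pullback along a holomorphic self-map of the domain, and no such map can collapse the real directions of $\fa^*$ onto $i\fa^*_0$. The real work therefore lies in making the reduction to one complex variable precise inside Phillips' $K$-theory, in particular handling the corners produced by the Weyl chambers and controlling the $\varprojlim{}^1$-terms; once that is done, the remainder of the argument is routine bookkeeping with mapping cones and five-lemma chases.
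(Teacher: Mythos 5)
Your reduction to the restriction map
\[
r\colon \PW(\fa^*,\End(p_nI_{\sigma_n}))^{W_{\sigma_n}}\longrightarrow C_0(i\fa^*_0,\End(p_nI_{\sigma_n}))^{W_{\sigma_n}}
\]
is correct and is exactly what the paper does (you even correct a small reference slip: the paper cites Theorem~\ref{thm: Cstar mink} where Theorem~\ref{thm: Cstar on lowktypes} is the one supplying $p_n(J_n/J_{n-1})p_n\cong C_0(i\fa^*_0,\cdot)^{W_{\sigma_n}}$). The divergence, and the gap, is in how you propose to handle $r$. Your one-variable ``check by hand'' is exactly the kind of computation that is \emph{not} valid for a non-good Fr\'echet algebra, a fact the paper flags for $\cS(G,F)$ and which applies equally to $\PW(\bC)$. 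Concretely: an invertible $c+f$ in $\PW(\bC)^+$ is an entire, nowhere-vanishing function with $f$ rapidly decreasing on vertical strips, so by the argument principle its restriction to $i\bR$ has winding number $0$; yet $K_1(\PW(\bC))\cong\bZ$ in Phillips' sense. The nontrivial class lives only in the Banach-algebra completions on bounded tubes, not in invertibles over the Fr\'echet unitization, so ``the class of an invertible is its winding number'' misidentifies $RK_1$. The same issue undercuts the $K_0$ claim. Two further steps would also need substantial justification: the Chevalley map $\fa^*\to\fa^*/\!\!/W^0_{\sigma_n}$ is polynomial, not linear, so it does not transport the Paley--Wiener growth/decay conditions to a Paley--Wiener space on the quotient; and there is no ready-made K\"unneth theorem in Phillips' $RK$-theory of inverse-limit Fr\'echet algebras.

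The paper navigates all of this without either Chevalley coordinates or a one-variable reduction. One writes $\PW(\fa^*,V)^{W}=\varprojlim_{X,N}\PW_N(X,V)^{W}$ as an inverse limit of Banach algebras over bounded tubes $X=\{\|\Re\lambda\|\le k\}$ (Lemma~\ref{lem: banachcompletePW}), and $C_0(i\fa^*_0,V)^{W}=\varinjlim_l\PW_0(2^{-l}X,V)^{W}$ as a direct limit. Karoubi density shows $\PW_N(X,V)^W\hookrightarrow\PW_0(X,V)^W$ is a $K$-isomorphism (the key point being that a quasi-inverse of a rapidly decaying function is automatically rapidly decaying via Cramer's rule). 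For independence of the tube one uses the explicit Banach-algebra homotopy $\alpha_t\circ\rest(f)(\lambda)=f(\lambda/(1+t))$; this \emph{is} a holomorphic self-map of the tube that progressively shrinks it, so your observation that no holomorphic map of $\fa^*$ collapses onto $i\fa^*_0$ is true but beside the point --- the homotopy lives on the Banach completions, not on the Fr\'echet algebra. The $\varprojlim{}^1$ sequence from \cite[Theorem~6.5]{phillipsktheory} then shows $K_*(\PW(\fa^*,V)^W)\cong K_*(\PW_0(X,V)^W)$ with vanishing $\varprojlim{}^1$ (the transition maps are isomorphisms), and continuity of $K$-theory under direct limits of Banach algebras \cite[Theorem~3.3]{2021braddhigson} handles the $C_0$ side. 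The restriction map factors compatibly through these (co)limits, giving the isomorphism. This argument treats all variables and all $W_{\sigma_n}$ at once, so no Morita bookkeeping for the $R_{\sigma_n}$-twists is needed beyond what is already in Lemma~\ref{lem: banachcompletePW}.
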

\begin{proof}	
	Set $\sigma = \sigma_n$. We assume $A(\sigma_n) \subset F$, as otherwise (by our assumptions on $F$) $\cJ_n/\cJ_{n-1} = 0$ and $J_n/J_{n-1} = 0$. By the previous proposition and Theorem \ref{thm: frechetmorita}, it suffices to show that the mapping cone of $p_n(\cJ_n/\cJ_{n-1}) p_n \to p_n(J_n/J_{n-1})p_n$ has zero $K$-theory. By Theorems \ref{thm: PW on lowktypes} and \ref{thm: Cstar mink}, we must show that the restriction map
	\begin{equation}\label{eq: PW to Cstar}
		\PW(\fa^*,\End(p_n I_\sigma))^{W_\sigma} \to C_0(i\fa^*_0, \End(p_n I_\sigma))^{W_\sigma}
	\end{equation}
	induces an isomorphism in $K$-theory.	
	
	We consider tubes $X$ of the form $\{\lambda \in \fa^*: \|\Re \lambda\| < k\}$ for some $k > 0$. If we define $	\PW_N(X,\End(p_n I_\sigma))^{W_\sigma}$ as in Lemma \ref{lem: banachcompletePW}, we have
	\[\PW(\fa^*,\End(p_n I_\sigma))^{W_\sigma} = \varprojlim_{X,N} 
	\PW_N(X,\End(p_n I_\sigma))^{W_\sigma}.\]
	Also, for any fixed tube $X$, we have
	\begin{equation}\label{eq: Cstar direct limit of PW}
		C_0(i\fa^*_0,\End(p_n I_\sigma))^{W_\sigma} = \varinjlim_l \PW_0(2^{-l}X,\End(p_n I_\sigma))^{W_\sigma},
	\end{equation}
	where the direct limit is in the category of Banach algebras and contractive morphisms (note that $\PW_0(X, \End(p_N I_\sigma))^{W_\sigma}$ consists of $W_\sigma$-invariant $C_0$ functions on $X$ which are holomorphic on the interior of $X$).
	
	We first show that $\PW_N(X,\End(p_n I_\sigma))^{W_\sigma}$ is independent of $N$ up to $K$-theory isomorphism. Indeed, the inclusion maps
	\[\PW_{N+1}(X,\End(p_n I_\sigma))^{W_\sigma} \hookrightarrow  \PW_N(X,\End(p_n I_\sigma))^{W_\sigma}\]
	have dense range, and we claim that the image is holomorphically stable. Fix $f \in\PW_{N+1}(X,\End(p_n I_\sigma))^{W_\sigma}$ and suppose it has a quasi-inverse $h \in \PW_{N}(X,\End(p_n I_\sigma))^{W_\sigma}$. Then for each $\lambda \in X$, the operator $1+f(\lambda)$ is invertible and
	\[h(\lambda)= -f(\lambda)(1+f(\lambda))^{-1}.\]	
	Now, as $f$ vanishes at infinity (on $X$), $\det(1+f)$ is bounded away from $0$. Cramer's rule implies that $(1+f)^{-1}$ is bounded on $X$, hence the function $-[(1+|\lambda|)^{N+1}f](1+f)^{-1}$ is bounded on $X$. Hence, $h \in \PW_{N+1}(X,\End(p_n I_\sigma))^{W_\sigma}$. This proves holomorphic stability, and Karoubi density implies that they have the same $K$-theory.
	
	We now show that $\PW_0(X, \End(p_n I_\sigma))^{W_\sigma}$ is independent of $X$ up to homotopy of Banach algebras. More precisely, we claim that the restriction map 
	\[\rest: \PW_0(X, \End(p_n I_\sigma))^{W_\sigma} \to \PW_0(X/2, \End(p_n I_\sigma))^{W_\sigma}\]
	has homotopy inverse
	\[\alpha_1: \PW_0(X/2, \End(p_n I_\sigma))^{W_\sigma} \to \PW_0(X, \End(p_n I_\sigma))^{W_\sigma},\]
	given by $(\alpha_1 f)(\lambda) = f(\lambda/2)$.
	The composition $\alpha_1 \circ \rest$ is the restriction to $t=1$ of the map
	\[\alpha_\bullet \circ \rest: \PW_0(X, \End(p_n I_\sigma))^{W_\sigma} \times [0,1] \to \PW_0(X, \End(p_n I_\sigma))^{W_\sigma},\]
	given by $\alpha_t\circ \rest(f)(\lambda) = f(\lambda/(1+t))$. Of course, when $t = 0$, the above map is the identity on $\PW_0(X, \End(p_n I_\sigma))^{W_\sigma}$. Similarly, the map 
	\[\rest \circ \alpha_\bullet: \PW_0(X/2, \End(p_n I_\sigma))^{W_\sigma} \times [0,1] \to \PW_0(X/2, \End(p_n I_\sigma))^{W_\sigma},\]
	\normalsize
	given by $(\rest \circ \alpha_t)(f)(\lambda) = f(\lambda/(1+t))$, defines a homotopy between $\rest \circ \alpha_1$ and the identity on $\PW_0(X/2, \End(p_n I_\sigma))^{W_\sigma}$. 
	
	Using the direct limit \eqref{eq: Cstar direct limit of PW}, and continuity in $K$-theory (see \cite[Theorem 3.3]{2021braddhigson}), we now see that the restriction map \eqref{eq: PW to Cstar} induces an isomorphism in $K$-theory.
\end{proof}

We now prove that the inclusion $\cS(G,F) \to C_r^*(G,F)$ induces an isomorphism in $K$-theory. We shall do so by a series of six-term exact sequence arguments and five lemma arguments. More precisely, for each $n \in \bN$, we prove that the mapping cone
\[\MC_n = \MC\left(\cJ_n \to J_n \right)\]
vanishes in $K$-theory. When $n = 0$, we have $\cJ_0 = J_0 = 0$. For $n > 0$, the short exact sequence of Fr{\'e}chet algebras
\[0 \to \MC_{n-1} \to \MC_n \to \MC(\cJ_n/\cJ_{n-1} \to J_n/J_{n-1}) \to 0,\]
leads (via \cite[Theorem 6.1]{phillipsktheory}) to the $6$-term exact sequence
\[\small
\xymatrix@C=1em{
	K_0(\MC_{n-1}) \ar[r] & K_0(\MC_n) \ar[r] &K_0(\MC(\cJ_n/\cJ_{n-1} \to J_n/J_{n-1}))\ar[d]\\
	K_1(\MC(\cJ_n/\cJ_{n-1} \to J_n/J_{n-1}))\ar[u] &\ar[l]K_1(\MC_n) & \ar[l] K_1(\MC_{n-1}).
}
\]
By Theorem \ref{thm: subquotient Kthy iso} and the above exact sequence, we see that $K_*(\MC_n) \cong K_*(\MC_{n-1})$. Inductively it follows that $K_*(\MC_n) = 0$. As $\cJ_n = \cS(G,F)$ and $J_n = C_r^*(G,F)$ for large enough $n$, this concludes the proof of Theorem \ref{thm: mainthm}.

\section{Proof of Theorem \ref{thm: Jsig minK}}\label{sec: deferredproof}
We now prove Theorem \ref{thm: Jsig minK}, which states that
\[\cJ_n/\cJ_{n-1} = (\cJ_n/\cJ_{n-1}) p_n (\cJ_n /\cJ_{n-1}).\]

We shall reduce this theorem to a ``Factoring Theorem'', which we then prove in the next section. The Factoring Theorem is the analogue for $\cS(G)$ of results of Delorme \cite{delormePW}, particularly \cite[Proposition 1]{delormePW}.

We first define the Hecke algebra. The following uses notation and results from \cite[Chapter 1]{knappvogan}.
\begin{defn}
	The Hecke algebra $R(K)$ of $K$ is the space of $K$-finite smooth functions on $K$.
\end{defn}
It is readily checked that the functions $p_\gamma$ from Definition \ref{defn: projectionontoF} are projections in $R(K)$ for each $\gamma \in \widehat{K}$.
From \cite[(1.37) and Proposition 1.39]{knappvogan},
\[R(K) \cong \bigoplus_{\gamma \in \Khat} p_\gamma R(K) \cong \bigoplus_{\gamma \in \Khat} \End(V_\gamma),\]
where $V_\gamma$ denotes a vector space representative of $\gamma \in \Khat$.

\begin{defn}\label{defn: hecke algebra}
	The Hecke algebra $R(\fg, K)$ of $G$ is the convolution algebra of $K$-finite distributions of $G$ which are supported in $K$. 
\end{defn}
By \cite[Corollary 1.71]{knappvogan}, there is an isomorphism of algebras
\begin{equation}\label{eq-heckealgasunivalg}
	R(K) \otimes_{\cU(\fk)} \cU(\fg) \xrightarrow{\cong} R(\fg, K)
\end{equation}
given by $T \otimes u \mapsto T *_K u$. Here, we identify $u \in \cU(\fg)$ with the distribution $\tilde{u} \cdot \delta_e$ supported on the identity $\{e\}$, where $\tilde{u}$ is the left-invariant differential operator corresponding to $u$.

We remark that the category of $(\fg, K)$-modules is equivalent to the category of approximately unital $R(\fg, K)$-modules (\cite[Theorem 1.117]{knappvogan}).
\begin{lem}[{See \cite[Proposition 1]{delormeHC}}]
	For $h \in R(\fg, K)$ and $\varphi, \psi \in I_\sigma$, the map $\lambda \mapsto \langle \pi_{\sigma,\lambda}(h)\varphi, \psi \rangle$ is a polynomial function on $\fa^*$.
\end{lem}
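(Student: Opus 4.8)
The plan is to reduce the claim to the corresponding statement for the enveloping algebra $\cU(\fg)$ via the isomorphism \eqref{eq-heckealgasunivalg}, $R(K) \otimes_{\cU(\fk)} \cU(\fg) \xrightarrow{\cong} R(\fg, K)$. Since $R(K)$ decomposes as a finite direct sum $\bigoplus_\gamma \End(V_\gamma)$ over the $K$-types and each $p_\gamma$ acts as a projection, it suffices to treat a generator of the form $p_\gamma *_K u$ for $u \in \cU(\fg)$; indeed $\pi_{\sigma,\lambda}(p_\gamma *_K u)\varphi = \pi_{\sigma,\lambda}(p_\gamma)\pi_{\sigma,\lambda}(u)\varphi$, and the left projection $\pi_{\sigma,\lambda}(p_\gamma)$ is independent of $\lambda$ (it is just projection onto the $\gamma$-isotypic subspace of $I_\sigma$, which is the fixed subspace determined by $K$-structure alone). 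So the whole problem collapses to showing that $\lambda \mapsto \langle \pi_{\sigma,\lambda}(u)\varphi, \psi\rangle$ is polynomial in $\lambda$ for $u \in \cU(\fg)$ and $K$-finite $\varphi, \psi$.

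For that, first I would reduce to $u \in \fg$ using that $\cU(\fg)$ is generated by $\fg$ and that a product of operators each depending polynomially on $\lambda$ depends polynomially on $\lambda$ (here one must be slightly careful: $\pi_{\sigma,\lambda}(u_1 u_2)\varphi = \pi_{\sigma,\lambda}(u_1)\pi_{\sigma,\lambda}(u_2)\varphi$, and $\pi_{\sigma,\lambda}(u_2)\varphi$ again lies in a fixed finite-dimensional $K$-isotypic piece of $I_\sigma$ because $u_2$ moves $K$-types by a bounded amount — $\fg = \fk \oplus \fp$ and $\fp$ is a finite-dimensional $K$-module, so the differential operators shift $K$-types within a controlled finite range). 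Then for $X \in \fg_0$ one computes $\frac{d}{dt}\big|_{t=0}(\pi^P_{\sigma,\lambda}(\exp tX)\varphi)(k)$ directly from the formula \eqref{eq: princp series defn}: the dependence on $\lambda$ enters only through the factor $a_P(g^{-1}k)^{-(\lambda+\rho_P)}$, and differentiating in $t$ brings down a term linear in $\lambda$ (the derivative of $\lambda(\log a_P(\cdots))$) times a smooth function of $k$, plus a term not involving $\lambda$. Evaluating against $K$-finite $\varphi,\psi$ and integrating over $K$ makes everything finite-dimensional, so the matrix coefficient is literally affine in $\lambda$ for $u \in \fg_0$, hence polynomial in general.

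Assembling: a general $h \in R(\fg, K)$ is a finite $\bC$-linear combination of elements $p_\gamma *_K u_1 \cdots u_r$ with $u_i \in \fg$, and each such matrix coefficient is a product of a fixed projection with $r$ operators each affine in $\lambda$, acting within finite-dimensional $K$-isotypic subspaces, so the resulting matrix coefficient $\lambda \mapsto \langle \pi_{\sigma,\lambda}(h)\varphi,\psi\rangle$ is a polynomial of degree at most $r$. The main obstacle is the bookkeeping showing that all the operators $\pi_{\sigma,\lambda}(u_i)$ in the composition can be taken to act on honest finite-dimensional spaces with matrix entries polynomial in $\lambda$ — i.e., tracking that successive application of elements of $\fg$ keeps vectors inside a fixed finite span of $K$-types so that "polynomial-valued operator times polynomial-valued operator is polynomial-valued" is legitimate; once that finite-dimensionality is in place the computation is the elementary differentiation above. (This is exactly the content of \cite[Proposition 1]{delormeHC}, which we may simply cite.)
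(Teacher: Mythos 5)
The paper does not reproduce a proof of this lemma: it cites Delorme--Herb and moves on, so there is no paper argument to compare against. Your direct proof is essentially correct and is a reasonable reconstruction of the standard argument. The key computational step is sound: for $X\in\fg_0$, differentiating $a_P(\exp(-tX)k)^{-(\lambda+\rho_P)}\varphi(k_P(\exp(-tX)k))$ at $t=0$ produces $-(\lambda+\rho_P)\bigl(\tfrac{d}{dt}\big|_{t=0}\log a_P(\exp(-tX)k)\bigr)\varphi(k)$ plus a $\lambda$-independent term, so $\pi_{\sigma,\lambda}(X)$ is affine in $\lambda$ on any fixed finite-dimensional $K$-isotypic subspace; and the observation that $\fg$ shifts $K$-types only within the finite range controlled by the $K$-types of $\fg$ (equivalently, that $\fg\otimes V\to V$ is $K$-equivariant) makes the composition argument legitimate.

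One small slip: a general element of $R(\fg,K)$ is a finite sum of $T *_K u$ with $T\in R(K)$ and $u\in\cU(\fg)$, not of $p_\gamma *_K u$. Since $R(K)\cong\bigoplus_\gamma\End(V_\gamma)$, the block $\End(V_\gamma)$ is not spanned by the single projection $p_\gamma$ (unless $\dim V_\gamma=1$). This does not affect your argument, though, because the point you actually use is that $\pi_{\sigma,\lambda}(T)$ is independent of $\lambda$, and that is true for every $T\in R(K)$, not just for the projections. I would simply replace $p_\gamma *_K u$ by $T *_K u$ throughout.
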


As usual, given a finite set $F \subset \widehat{K}$ we write
\[R(\fg, F) = p_F R(\fg, K) p_F.\] 

To prove Theorem \ref{thm: Jsig minK}, the important point is that the matrices $\pi_n(\cJ_n)$ can be reduced (via polynomials) to matrices on only the minimal $K$-types, stated below.
\begin{thm}[``Factoring Theorem'', {cf. \cite[Proposition 1]{delormePW}}]\label{thm: factoring thm}
	For each $n \in \bN$
	\[\pi_n(\cJ_n) = \pi_n(R(\fg,F) p_{n} \cS(G,F) p_{n} R(\fg,F)).\]
\end{thm}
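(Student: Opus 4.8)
The plan is as follows. First, both sides of the asserted identity vanish unless $\|\sigma_n\|\le R$: every $K$-type of $I_{\sigma_n}$ has length at least $\|\sigma_n\|$, so if $\|\sigma_n\|>R$ then $p_FI_{\sigma_n}=0$ and $\pi_n$ is identically $0$ on $\cS(G,F)$. Assume therefore $\minK{\sigma_n}\subset F$, so that $p_n\le p_F$. The decisive preliminary is the identification
\[\pi_n\big(p_n\cS(G,F)p_n\big)=\PW(\fa^*,\End(p_nI_{\sigma_n}))^{W_{\sigma_n}}=\pi_n\big(p_n\cJ_np_n\big),\]
the second equality being Theorem~\ref{thm: PW on lowktypes}. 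For the first: given $\chi\in\cS(G)$ and $w\in W_{\sigma_n}$, the normalized intertwiner $\cA(P_n,w,\lambda)$ carries $\pi^{P_n}_{\sigma_n,\lambda}$ (abbreviated $\pi_{n,\lambda}$) to $\pi^{P_n}_{\sigma_n,w\lambda}$, since $w\cdot\sigma_n\cong\sigma_n$, and acts on $p_nI_{\sigma_n}$ by the $\lambda$-independent scalars of Definition~\ref{defn: hatr mu nu}, so $\pi_n(p_n\chi p_n)$ is $W_{\sigma_n}$-invariant; this gives ``$\subset$'', and ``$\supset$'' follows from $p_n\cJ_np_n\subset p_n\cS(G,F)p_n$. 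Since $\pi_n$ applied to a finite sum $\sum a_i*\chi_i*b_i$ with $a_i,b_i\in R(\fg,F)$ depends on the $\chi_i$ only through the $\pi_n(\chi_i)$, this identification lets me replace $\cS(G,F)$ by $\cJ_n$ in the middle of $R(\fg,F)p_n\cS(G,F)p_nR(\fg,F)$ without changing its $\pi_n$-image, so Theorem~\ref{thm: factoring thm} becomes $\pi_n(\cJ_n)=\pi_n\big(R(\fg,F)p_n\cJ_np_nR(\fg,F)\big)$. The inclusion $\supset$ is now immediate, because $\cJ_n=\bigcap_{m>n}\ker\pi_m$ is stable under left and right convolution by $R(\fg,K)$ (each $\pi_m$ is an algebra homomorphism compatible with those actions, and convolution by $K$-finite distributions supported in $K$ preserves $\cS(G)$), whence $R(\fg,F)p_n\cJ_np_nR(\fg,F)\subset\cJ_n$.

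For the reverse inclusion I would first prove a \emph{polynomial generation lemma}: there exist a nonzero $W_{\sigma_n}$-invariant polynomial $D$ on $\fa^*$, finitely many $a_k,b_k\in R(\fg,F)$, and polynomials $c_k$ with
\[D(\lambda)\,\Id_{p_FI_{\sigma_n}}=\sum_k c_k(\lambda)\,\pi_{n,\lambda}(a_k)\,p_n\,\pi_{n,\lambda}(b_k)\qquad(\lambda\in\fa^*).\]
For $\lambda$ outside a proper algebraic subset $I^{P_n}_{\sigma_n,\lambda}$ is irreducible, so Jacobson density gives $\pi_{n,\lambda}(R(\fg,F))=\End(p_FI_{\sigma_n})$ and hence $\pi_{n,\lambda}\big(R(\fg,F)p_nR(\fg,F)\big)=\End(p_FI_{\sigma_n})p_n\End(p_FI_{\sigma_n})=\End(p_FI_{\sigma_n})$ (as $p_n\ne0$); choosing $(\dim p_FI_{\sigma_n})^2$ elements $u_k=a_k*p_n*b_k$ whose images span at one such $\lambda$, Cramer's rule for the polynomial matrix $M(\lambda)=\big(\pi_{n,\lambda}(u_k)\big)_k$ yields the identity with $D=\det M$ and the $c_k$ the cofactors; and $D$ is automatically $W_{\sigma_n}$-invariant because $M(w\lambda)$ arises from $M(\lambda)$ by conjugating every entry with $\cA(P_n,w,\lambda)$, an operation of determinant $1$ on the space of matrices.

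Given $\phi\in\cJ_n$, I would then sandwich $\pi_n(\phi)$ between two copies of this identity, writing $D(\lambda)^2\pi_n(\phi)(\lambda)$ as a sum of terms $\pi_{n,\lambda}(a_k)\,[\,\cdots\,]\,\pi_{n,\lambda}(b_l)$ whose middle factors $c_k(\lambda)c_l(\lambda)\,p_n\pi_{n,\lambda}(b_k)\pi_n(\phi)(\lambda)\pi_{n,\lambda}(a_l)p_n$ are Paley--Wiener functions valued in $\End(p_nI_{\sigma_n})$. Replacing each middle factor by its $W_{\sigma_n}$-average leaves the total sum unchanged --- $D^2\pi_n(\phi)$ is $W_{\sigma_n}$-invariant (as $\phi\in\cS(G,F)$ and $D$ is invariant), and the sandwiching $\pi_{n,\lambda}(a_k)(\,\cdot\,)\pi_{n,\lambda}(b_l)$ intertwines the $W_{\sigma_n}$-actions --- and turns the middle factors into elements of $\PW(\fa^*,\End(p_nI_{\sigma_n}))^{W_{\sigma_n}}=\pi_n(p_n\cJ_np_n)$. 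Lifting these back to $p_n\cJ_np_n$ yields $D^2\cdot\pi_n(\cJ_n)\subset\pi_n\big(R(\fg,F)p_n\cJ_np_nR(\fg,F)\big)$.

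It remains to remove the factor $D^2$, which is the content of the companion \emph{Divisibility Theorem} (the analogue for $\cS(G)$ of \cite[Proposition~2]{delormePW}): if $q$ is a nonzero polynomial and $q\cdot f\in\pi_n\big(R(\fg,F)p_n\cJ_np_nR(\fg,F)\big)$ with $f\in\pi_n(\cJ_n)$, then $f\in\pi_n\big(R(\fg,F)p_n\cJ_np_nR(\fg,F)\big)$. I expect \textbf{this divisibility statement, and its entanglement with the Factoring Theorem, to be the main obstacle}: following Delorme one factors $q$ into linear forms and clears them one at a time, but clearing a factor forces a restriction of all the data to the corresponding hyperplane, where the relevant representations are attached to a cuspidal parabolic with strictly smaller split part, so the two theorems must be proved simultaneously by induction on $\dim\fa$. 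The analytic inputs this induction needs --- polynomial division of rapidly decreasing holomorphic functions and the Rais-type decomposition --- are exactly Lemmas~\ref{lem: polydivision}, \ref{lem: hcparamrestrictionthing} and Theorem~\ref{thm: Rais Theorem}, which were prepared for this purpose, so Delorme's proof of \cite[Propositions~1,2]{delormePW} transfers to $\cS(G)$ with only cosmetic changes; this is what the remainder of Section~\ref{sec: deferredproof} carries out.
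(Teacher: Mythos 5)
Your reduction is mostly in the right spirit but the crucial step is missing, and the route you sketch for it does not match the argument actually needed. Let me be concrete.

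The easy inclusion $\supseteq$ and the reduction to $\pi_n(\cJ_n)=\pi_n(R(\fg,F)p_n\cJ_np_nR(\fg,F))$ are fine, as is the idea of producing a polynomial identity $D(\lambda)\Id=\sum_k c_k(\lambda)\pi_{n,\lambda}(a_k)p_n\pi_{n,\lambda}(b_k)$ by Cramer's rule; this is recognizably a generic version of the ``factoring data'' that the paper uses (Definition~\ref{defn: factoring data} and Lemma~\ref{lem: specializeddelorme}). But the heart of the theorem is exactly the step you defer, and what you defer it to is not what is actually done. You posit a polynomial-clearing lemma (``if $q\cdot f$ lies in the right-hand side and $f\in\pi_n(\cJ_n)$, then $f$ does too'') and propose to prove it together with the Factoring Theorem by induction on $\dim\fa$, clearing one linear factor of $q$ at a time. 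The paper does something structurally different: it introduces the space $\PW_{\text{divis}}$ of Definition~\ref{defn: Esigma}, proves the Divisibility Theorem~\ref{thm: divisibility thm} that $\pi_\sigma(\cJ_n)\subseteq\PW_{\text{divis}}$ (this is done not by induction on $\dim\fa$ but by the Delorme--Souaifi derivative theory, Theorems~\ref{thm: DelSou} and~\ref{thm: delsouconsequence}), and then uses the three properties of $\PW_{\text{divis}}$ \emph{inside} the Cramer's-rule computation to show the meromorphic solution $N^P$ is holomorphic after multiplication by $\Psi^P\tPsi^P$. The denominators in the paper's Cramer's rule have a specific structure (Lemma~\ref{lem: specializeddelorme}(1)--(2): $\det\Phi^P\det\tPhi^P=\Psi^P\tPsi^P b^P\bbar^P$), the $b^P\bbar^P$ factor cancels against the divisibility by intertwining operators (Property 3), and the remaining $\Psi^P\tPsi^P$ is eliminated not by clearing it afterward but by the Rais-type Lemma~\ref{lem: specializeddelorme}(3), which lets one choose factoring data with $\sum_m\Psi_m\equiv 1$ and $\sum_r\tPsi_r\equiv 1$ so that no polynomial ever needs to be removed.

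Your generic determinant $D$ coming from an arbitrary Jacobson-density choice of $u_k$ does not carry this structure, so there is no mechanism in your argument to cancel its zeros, and the polynomial-clearing lemma you would need is not obviously true for such a $D$; in fact proving it would essentially require redeveloping the divisibility-by-intertwiner machinery, so nothing has been saved. The proposal therefore has a genuine gap at the central point, and the ``induction on $\dim\fa$'' guess is not the method the paper (or Delorme) uses.
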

The inclusion $\supseteq$ follows from the fact that $p_{\minK{\sigma_n}} \cS(G,F) p_{\minK{\sigma_n}} \subset \cJ_n$, and that $\cJ_n$ is an $R(\fg,F)$-bisubmodule of $\cS(G, F)$. The difficulty lies in the inclusion  $\subseteq$, which we prove in Section \ref{sec: delresults}. This theorem is due to Delorme \cite[Proposition 1]{delormePW} in the $C_c^\infty(G)$ case.

We recall \eqref{eq: minKfulldecomp}, which (combined with Theorem \ref{thm: PW on lowktypes}) implies that, for $\mu,\nu \in \minK{\sigma}$,
\begin{equation}
	p_{\mu} \pi_n(\cJ_n) p_\nu = \PW(\fa^*)^{W_\sigma^0}(\hat{r}_{\mu\nu}) \otimes \Hom(p_\nu I_\sigma, p_\mu I_\sigma).
\end{equation}
Let us briefly recall the notation in the above equation. The group $W_\sigma$ decomposes as a semidirect product of subgroups
\[W_\sigma = R_\sigma W_\sigma^0,\]
where $R_\sigma$ is a product of copies of $\bZ/2$. Also, there is a simply transitive action of $\widehat{R}_\sigma$ on $A(\sigma)$, and we write $\hat{r}_{\mu\nu}$ to denote the unique element of $\widehat{R}_\sigma$ such that $\hat{r}_{\mu\nu} \cdot \nu = \mu$. Accordingly, we define $\PW(\fa^*)^{W_\sigma^0}(\hat{r}_{\mu\nu})$ to consist of all $W^0_\sigma$-invariant $f$ such that
\[f(w\lambda) = \hat{r}_{\mu\nu}(w)f(\lambda)\]
for any $w \in W_\sigma$ (we have extended $\hat{r} \in \widehat{R}_\sigma$ to $W_\sigma$ via the decomposition $W_\sigma = R_\sigma W_\sigma^0$.)

We now proceed toward the proof of Theorem \ref{thm: Jsig minK}, which states that
\[(\cJ_n/\cJ_{n-1})p_n(\cJ_n/\cJ_{n-1}) = \cJ_n/\cJ_{n-1}.\]

\begin{lem}\label{lem: dixmiermalliavin}
	We have
	\[ \PW(\fa^*) \cdot  \PW(\fa^*) =  \PW(\fa^*).\]
\end{lem}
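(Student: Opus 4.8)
The plan is to realize $\PW(\fa^*)$ as the space of smooth vectors for a representation of the additive group $\fa_0$ and then invoke the theorem of Dixmier--Malliavin \cite{dixmiermalliavin} on factorization of smooth vectors. (Morally, under Fourier--Laplace transform $\PW(\fa^*)$ is a convolution algebra of super-exponentially decaying smooth functions on $\fa_0$, and the assertion is that this algebra factorizes; the representation-theoretic packaging below lets us avoid making that identification explicit. The Gaussians $\lambda \mapsto e^{s\lambda^2}$ ($s>0$) already lie in $\PW(\fa^*)$, which is consistent with $\PW(\fa^*)\cdot\PW(\fa^*)=\PW(\fa^*)$.)

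Concretely, I would let $\fa_0$ act on $\PW(\fa^*)$ by
\[(\pi(x)f)(\lambda) = e^{\lambda(x)}f(\lambda), \qquad x\in\fa_0,\ \lambda\in\fa^*,\]
and first check that this defines a strongly continuous representation on the Fréchet space $\PW(\fa^*)$. The essential estimate is $|e^{\lambda(x)}| = e^{(\Re\lambda)(x)}\le e^{k\|x\|}$ whenever $\|\Re\lambda\|\le k$, which shows $\pi(x)$ maps $\PW(\fa^*)$ continuously into itself (indeed $\|\pi(x)f\|_{N,k}\le e^{k\|x\|}\|f\|_{N,k}$), so the representation is locally equicontinuous; strong continuity $x\mapsto\pi(x)f$ then follows by a routine splitting of the supremum over $\|\Re\lambda\|\le k$ into the regions $|\lambda|\le M$ and $|\lambda|>M$. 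Since multiplication by a coordinate function carries $\PW(\fa^*)$ into itself (rapid decay absorbs polynomial growth), the iterated derivatives $\partial_x^\alpha\pi(x)f = \pi(x)(\lambda^\alpha f)$ all exist and are continuous, so \emph{every} element of $\PW(\fa^*)$ is a smooth vector.

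With that in place, the Dixmier--Malliavin theorem gives that each $f\in\PW(\fa^*)$ is a finite sum $\sum_i \pi(\phi_i)v_i$ with $\phi_i\in C_c^\infty(\fa_0)$ and $v_i\in\PW(\fa^*)$, where $\pi(\phi)v=\int_{\fa_0}\phi(x)\pi(x)v\,dx$ converges in $\PW(\fa^*)$ (compactly supported continuous $\PW(\fa^*)$-valued integrand). Evaluating at a point $\lambda$ — legitimate since point evaluation is continuous on $\PW(\fa^*)$ — yields $(\pi(\phi)v)(\lambda)=\widehat{\phi}(\lambda)\,v(\lambda)$ with $\widehat{\phi}(\lambda):=\int_{\fa_0}\phi(x)e^{\lambda(x)}\,dx$. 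A final verification shows $\widehat{\phi}\in\PW(\fa^*)$: it is entire in $\lambda$, it is bounded by $e^{kR}\|\phi\|_{L^1}$ on $\{\|\Re\lambda\|\le k\}$ where $R$ is the radius of $\supp\phi$, and integration by parts in $x$ gives that $(1+|\lambda|)^N|\widehat{\phi}(\lambda)|$ is bounded on each such tube by a constant times $\sum_{|\alpha|\le N}\|\partial^\alpha\phi\|_{L^1}$. Hence $f=\sum_i\widehat{\phi_i}\cdot v_i\in\PW(\fa^*)\cdot\PW(\fa^*)$, while the reverse inclusion is immediate because $\PW(\fa^*)$ is an algebra.

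The only substantive ingredient is the Dixmier--Malliavin theorem itself; everything else is routine. The one step requiring a little care is the continuity of the orbit maps $x\mapsto\pi(x)(\lambda^\alpha f)$, which needs the $|\lambda|\le M$ / $|\lambda|>M$ splitting rather than a naive Taylor bound, since $|e^{\lambda(x)}-1|$ is not small uniformly in $\lambda$ on a tube of fixed width.
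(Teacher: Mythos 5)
Your proof is correct and takes essentially the paper's route: both realize $\PW(\fa^*)$ as a smooth Fr\'echet representation of the vector group $\fa_0$ (equivalently $A$) acting by $(\pi(x)f)(\lambda)=e^{\lambda(x)}f(\lambda)$ and then factorize smooth vectors. The only difference is the citation: the paper invokes the Bernstein--Kr\"otz result $\cS(A)\cdot\PW(\fa^*)=\PW(\fa^*)$, whereas you apply the original Dixmier--Malliavin theorem with $C_c^\infty(\fa_0)$ and observe that the resulting $\widehat{\phi}$ already lie in $\PW(\fa^*)$.
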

\begin{proof}
	Let $A$ act on $\PW(\fa^*)$ by $(a \cdot f)(\lambda) = e^{\lambda(\log a)} f(\lambda)$. Integrating this representation, the Casselman algebra, $\cS(A)$, of $A$ acts on $\PW(\fa^*)$ as multiplication by the (Euclidean) Fourier transform, and the lemma is implied by the statement that $\cS(A) \cdot \PW(\fa^*)  = \PW(\fa^*)$. This is now a consequence of \cite[Remark 2.19]{2014bernsteinkrotz}.
\end{proof}

\begin{lem}\label{lem: raisspecific}
	We have
	\[ \PW(\fa^*) = \bC[\fa^*]  \PW(\fa^*)^{W(\fa)}.\]
\end{lem}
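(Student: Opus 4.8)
The plan is to deduce this directly from Theorem \ref{thm: Rais Theorem}, applied with $V_0 = \fa_0^*$ and $W = W(\fa)$. The only substantive point to verify is that $W(\fa) = W(\fg_0,\fa_0) = N_K(\fa_0)/Z_K(\fa_0)$ acts on $\fa_0^*$ as a finite group generated by reflections. This is classical: $W(\fg_0,\fa_0)$ is the Weyl group of the restricted root system $\Delta(\fg_0,\fa_0)$, hence is finite and generated by the root reflections $s_\alpha$, $\alpha \in \Delta(\fg_0,\fa_0)$; moreover these reflections are orthogonal for the inner product \eqref{eq: innerproduct} on $\fa_0^*$, so $W(\fa) \subset O(\fa_0^*)$. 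Thus the hypotheses of Theorem \ref{thm: Rais Theorem} are met.

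Granting that, Theorem \ref{thm: Rais Theorem} yields homogeneous polynomials $p_1,\dots,p_{|W(\fa)|} \in \bC[\fa^*]$ with
\[
\PW(\fa^*) = \sum_i p_i\,\PW(\fa^*)^{W(\fa)} \subseteq \bC[\fa^*]\,\PW(\fa^*)^{W(\fa)},
\]
which gives the inclusion $\subseteq$ in the statement. For the reverse inclusion, I would note that $\bC[\fa^*] \subset \PW(\fa^*)$ and that multiplying a Paley--Wiener function by a polynomial of degree $d$ only worsens the polynomial decay exponent by $d$, so $\bC[\fa^*]\cdot\PW(\fa^*) \subseteq \PW(\fa^*)$; since $\PW(\fa^*)^{W(\fa)} \subseteq \PW(\fa^*)$, this gives $\bC[\fa^*]\,\PW(\fa^*)^{W(\fa)} \subseteq \PW(\fa^*)$, and the two inclusions combine to the desired equality.

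I do not anticipate a real obstacle here: the content is entirely contained in Theorem \ref{thm: Rais Theorem}, and the lemma is simply the instance of that theorem for the reflection group $W(\fa)$ acting on $\fa_0^*$. The only care needed is the bookkeeping remark that the finitely many reflection-group summands $p_i\,\PW(\fa^*)^{W(\fa)}$ together span exactly the subalgebra $\bC[\fa^*]\,\PW(\fa^*)^{W(\fa)}$, which is immediate.
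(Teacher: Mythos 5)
Your proof is correct and takes the same route as the paper, which simply states that the lemma "is a consequence of Theorem~\ref{thm: Rais Theorem}." You supply the routine details the paper leaves implicit (that $W(\fa)$ is a finite reflection group acting orthogonally on $\fa_0^*$, and the trivial reverse inclusion), but the substance is identical.
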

This is a consequence of Theorem \ref{thm: Rais Theorem}.

\begin{lem}\label{lem: dixmiermal averaged}
	For each cuspidal pair $(P,\sigma)$ and each $\mu,\nu \in \minK{\sigma}$,
	\begin{equation}\label{eq: dixmier-malliavin}
		\PW(\fa^*)^{W_\sigma^0}(\hat{r}_{\mu\nu}) \cdot  \PW(\fa^*)^{W_\sigma} = \PW(\fa^*)^{W_\sigma^0}(\hat{r}_{\mu\nu}).
	\end{equation}
\end{lem}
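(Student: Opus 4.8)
The plan is to reduce the identity to two facts about the full restricted Weyl group $W(\fa)$ --- which contains both $W_\sigma^0$ and $W_\sigma$ --- and then assemble them. For a subgroup $H\subseteq W(\fa)$ and a character $\xi$ of $H$, write
\[\PW(\fa^*)_{H,\xi}=\{f\in\PW(\fa^*):f(w\lambda)=\xi(w)f(\lambda)\ \text{for all}\ w\in H,\ \lambda\in\fa^*\},\]
and define $\bC[\fa^*]_{H,\xi}$ in the same way (the action of $W(\fa)$ is by orthogonal transformations of $\fa^*_0$, so it preserves $\PW(\fa^*)$). Extending $\hat r_{\mu\nu}$ to $W_\sigma=R_\sigma\ltimes W_\sigma^0$ so that it is trivial on $W_\sigma^0$, one has $\PW(\fa^*)^{W_\sigma^0}(\hat r_{\mu\nu})=\PW(\fa^*)_{W_\sigma,\hat r_{\mu\nu}}$ and $\PW(\fa^*)^{W_\sigma}=\PW(\fa^*)_{W_\sigma,\mathbf 1}$, because $W_\sigma\subseteq W(\fa)$. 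The first fact I would isolate is a Dixmier--Malliavin statement for $W(\fa)$, namely $\PW(\fa^*)^{W(\fa)}\cdot\PW(\fa^*)^{W(\fa)}=\PW(\fa^*)^{W(\fa)}$; the second is a ``Rais decomposition with character'', namely $\PW(\fa^*)_{H,\xi}=\bC[\fa^*]_{H,\xi}\cdot\PW(\fa^*)^{W(\fa)}$ for every such $H$ and $\xi$.

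For the Rais statement, given $f\in\PW(\fa^*)_{H,\xi}$, Lemma~\ref{lem: raisspecific} gives $f=\sum_k Q_kg_k$ with $Q_k\in\bC[\fa^*]$ and $g_k\in\PW(\fa^*)^{W(\fa)}$. Let $E$ denote the projection $h\mapsto|H|^{-1}\sum_{w\in H}\xi(w)^{-1}h(w\,\cdot\,)$ of $\PW(\fa^*)$ onto $\PW(\fa^*)_{H,\xi}$. Since each $g_k$ is $H$-invariant (as $H\subseteq W(\fa)$), $E$ is $\PW(\fa^*)^{W(\fa)}$-linear and therefore only acts on the polynomial factors, so $f=Ef=\sum_k(EQ_k)g_k$ with $EQ_k\in\bC[\fa^*]_{H,\xi}$; the reverse inclusion is clear, since a polynomial times a rapidly decreasing holomorphic function is again in $\PW(\fa^*)$ and transformation characters multiply. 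For the Dixmier--Malliavin statement, Theorem~\ref{thm: Rais Theorem} applied to $W(\fa)$ gives a \emph{free} decomposition $\PW(\fa^*)=\bigoplus_i p_i\PW(\fa^*)^{W(\fa)}$ with finitely many homogeneous $p_i$, one of which we may normalise to $p_1=1$ (as $\bC[\fa^*]$ and $\bC[\fa^*]^{W(\fa)}$ agree in degree $0$). Given $f\in\PW(\fa^*)^{W(\fa)}$, I would use Lemma~\ref{lem: dixmiermalliavin} to write $f=\sum_k a_kb_k$, expand $a_k=\sum_i p_ia_{ki}$ and $b_k=\sum_j p_jb_{kj}$ with $a_{ki},b_{kj}\in\PW(\fa^*)^{W(\fa)}$, and substitute $p_ip_j=\sum_l c_{ijl}p_l$ with $c_{ijl}\in\bC[\fa^*]^{W(\fa)}$ (freeness of $\bC[\fa^*]$ over $\bC[\fa^*]^{W(\fa)}$). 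Collecting the coefficient of each $p_l$ exhibits $f=\sum_l p_lm_l$ with each $m_l\in\PW(\fa^*)^{W(\fa)}\cdot\PW(\fa^*)^{W(\fa)}$, and uniqueness of the free decomposition, together with $f=p_1f$, forces $f=m_1$.

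To finish, set $\chi=\hat r_{\mu\nu}$. The inclusion $\PW(\fa^*)_{W_\sigma,\chi}\cdot\PW(\fa^*)_{W_\sigma,\mathbf 1}\subseteq\PW(\fa^*)_{W_\sigma,\chi}$ is immediate. Conversely, the Rais statement gives $\PW(\fa^*)_{W_\sigma,\chi}=\bC[\fa^*]_{W_\sigma,\chi}\cdot\PW(\fa^*)^{W(\fa)}$ and $\PW(\fa^*)_{W_\sigma,\mathbf 1}=\bC[\fa^*]^{W_\sigma}\cdot\PW(\fa^*)^{W(\fa)}$, so their product equals $\bC[\fa^*]_{W_\sigma,\chi}\,\bC[\fa^*]^{W_\sigma}\cdot\bigl(\PW(\fa^*)^{W(\fa)}\PW(\fa^*)^{W(\fa)}\bigr)$. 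Using $1\in\bC[\fa^*]^{W_\sigma}$ (so that $\bC[\fa^*]_{W_\sigma,\chi}\,\bC[\fa^*]^{W_\sigma}=\bC[\fa^*]_{W_\sigma,\chi}$) and the Dixmier--Malliavin statement, this collapses to $\bC[\fa^*]_{W_\sigma,\chi}\cdot\PW(\fa^*)^{W(\fa)}=\PW(\fa^*)_{W_\sigma,\chi}$, again by the Rais statement.

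The main obstacle is that one cannot run the two arguments above directly with $W_\sigma$ (or with $W_\sigma^0$ and the twist $\hat r_{\mu\nu}$) in place of $W(\fa)$: $W_\sigma$ is not a reflection group, so $\bC[\fa^*]$ is not free over $\bC[\fa^*]^{W_\sigma}$ and there is no finite free decomposition of $\PW(\fa^*)$ against which to invoke uniqueness in the Dixmier--Malliavin step. Routing everything through the genuine reflection group $W(\fa)\supseteq W_\sigma$, and letting the character-averaging projections absorb the group action into the polynomial factors, is what makes the reduction go through; the remaining verifications --- that these projections are $\PW(\fa^*)^{W(\fa)}$-linear, and that polynomial multiplication preserves $\PW(\fa^*)$ and is compatible with transformation characters --- are routine.
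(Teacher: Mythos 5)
Your argument is correct and rests on the same two ingredients as the paper's --- routing through the reflection group $W(\fa)\supseteq W_\sigma$ (since $W_\sigma$ itself is generally not a reflection group), and combining Lemmas~\ref{lem: dixmiermalliavin} and \ref{lem: raisspecific} with an averaging projection --- but the organization differs and the paper's route is noticeably shorter. The paper first shows $\PW(\fa^*)\cdot\PW(\fa^*)^{W(\fa)}=\PW(\fa^*)$ directly (using $1\in\bC[\fa^*]$ together with the two cited lemmas); then, given $f\in\PW(\fa^*)^{W_\sigma^0}(\hat{r}_{\mu\nu})$, it writes $f=\sum_i h_ig_i$ with $h_i\in\PW(\fa^*)$ and $g_i\in\PW(\fa^*)^{W(\fa)}\subset\PW(\fa^*)^{W_\sigma}$, and applies the projection $P$ onto $\PW(\fa^*)^{W_\sigma^0}(\hat{r}_{\mu\nu})$ (average over $W_\sigma^0$, then project onto the $\hat{r}_{\mu\nu}$-isotypic component of $R_\sigma$), which commutes with multiplication by the $W(\fa)$-invariant $g_i$; this yields $f=\sum_i P(h_i)g_i$ in one stroke. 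Your ``Rais with character'' lemma is essentially this same projection argument isolated as a standalone statement, and is fine; but your proof of the auxiliary fact $\PW(\fa^*)^{W(\fa)}\cdot\PW(\fa^*)^{W(\fa)}=\PW(\fa^*)^{W(\fa)}$ --- via a double expansion in the free basis $\{p_i\}$, the structure constants $c_{ijl}$, the normalization $p_1=1$, and uniqueness in Theorem~\ref{thm: Rais Theorem} --- is considerably heavier than necessary. Once one has $\PW(\fa^*)\cdot\PW(\fa^*)^{W(\fa)}=\PW(\fa^*)$, it suffices to write $f\in\PW(\fa^*)^{W(\fa)}$ as $\sum_ih_ig_i$ with $h_i\in\PW(\fa^*)$ and $g_i\in\PW(\fa^*)^{W(\fa)}$ and average the $h_i$ over $W(\fa)$; better still, as in the paper, one can skip this auxiliary statement entirely and apply a single projection at the very end, keeping $\PW(\fa^*)$ itself as the large space rather than passing first to its $W(\fa)$-invariant subalgebra.
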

\begin{proof}
	Because $1 \in \bC[\fa^*]$, Lemmas \ref{lem: dixmiermalliavin} and \ref{lem: raisspecific} imply
	{
		\[ \PW(\fa^*)  \PW(\fa^*)^{W(\fa)} =  \PW(\fa^*) \bC[\fa^*]  \PW(\fa^*)^{W(\fa)} =  \PW(\fa^*)  \PW(\fa^*) =  \PW(\fa^*),\]}and we obtain \eqref{eq: dixmier-malliavin} by averaging by $W_\sigma^0$ and projecting onto the $\widehat{R}_\sigma$-isotypical component $\hat{r}_{\mu\nu}$ (note that this projection commutes with multiplication by $ \PW(\fa^*)^{W(\fa)}$).
\end{proof}

\begin{proof}[Proof of Theorem \ref{thm: Jsig minK}]
	We will prove that
	\[\overline{\pi}_n(\cJ_n/\cJ_{n-1}) = \overline{\pi}_n(\cJ_n/\cJ_{n-1})p_{n} \overline{\pi}_n(\cJ_n/\cJ_{n-1}),\]
	which implies the theorem because $\overline{\pi}_n$ is an injective algebra homomorphism. Note that this is equivalent to the statement
	\[\pi_n(\cJ_n) = \pi_n(\cJ_n) p_n \pi_n(\cJ_n).\]
	
	Set $\sigma = \sigma_n$. It suffices to prove that 
	\begin{equation}\label{eq: morita by insert hecke}
		\pi_n(\cS(G,\minK{\sigma})) = \pi_n(\cS(G,\minK{\sigma})R(\fg,\minK{\sigma})\cS(G,\minK{\sigma})),
	\end{equation}
	because then, by Theorem \ref{thm: factoring thm} and the fact that $p_n \in R(\fg,\minK{\sigma})$,
	\begin{align*}
		\pi_n(\cJ_n) p_{n} \pi_n(\cJ_n) &=
		\pi_n(R(\fg,F)\cS(G,\minK{\sigma})R(\fg,\minK{\sigma})\cS(G,\minK{\sigma})R(\fg,F))\\
		&= \pi_n(R(\fg,F)\cS(G,\minK{\sigma})R(\fg,F)) = \pi_n(\cJ_n).
	\end{align*}
	We make use of the explicit formula for $\mu,\nu \in \minK{\sigma}$,
	\[\pi_n(p_\mu \cS(G,\minK{\sigma})p_\nu) =  \PW(\fa^*)^{W_{\sigma}^0}(\hat{r}_{\mu\nu}) \otimes \Hom(I_{\sigma}(\nu), I_{\sigma}(\mu)) \]
	which follows from Theorem \ref{thm: PW on lowktypes} and \eqref{eq: minKfulldecomp}.
	Applying \eqref{eq: dixmier-malliavin} on matrices, we obtain
	\[\pi_n(p_\mu \cS(G,\minK{\sigma})p_\nu) = \pi_n(p_\mu \cS(G,\minK{\sigma})p_\nu\cS(G,\minK{\sigma})p_\nu).\]
	As $p_\nu$ acts as the identity on $p_\nu R(\fg,\minK{\sigma})p_\nu$, we have
	\[\pi_n(p_\mu \cS(G,\minK{\sigma})p_\nu) = \pi_n(p_\mu \cS(G,\minK{\sigma})p_\nu R(\fg, \minK{\sigma})p_\nu\cS(G,\minK{\sigma})p_\nu).\]
	This gives the inclusion $\subseteq$ of \eqref{eq: morita by insert hecke}, and the other inclusion $\supseteq$ follows because $\pi_\sigma(\cS(G,\minK{\sigma_n}))$ is closed under the left and right action of $R(\fg,\minK{\sigma_n})$.
\end{proof}

\section{Proofs of Delorme's Factoring Theorem and Divisibility Theorem for $\cS(G, F)$}\label{sec: delresults}

We now turn to the proof of Theorem \ref{thm: factoring thm}, which states that
\[\pi_n(\cJ_n) = \pi_n\left(R(\fg, F)\cS(G,\minK{\sigma})R(\fg,F)\right).\]

As it will be important to consider arbitrary parabolic subgroups with some fixed Levi subgroup, we recall that we chose representatives $(P_n, \sigma_n)$ for each $G$-conjugacy class. We will fix $n$ such that $A(\sigma_n) \subset F$, and write $P_n = MAN$, $\sigma = \sigma_n$.

\begin{defn}\label{defn: adjacent parabolics}
	Two parabolic subgroups $P$ and $Q$ with Levi subgroup $MA$ are \textit{adjacent} if $\Delta^+_P \cap -\Delta_Q^+$ has a unique reduced root. If $\alpha$ is this root, and if $\lambda \in \fa^*$, then $\lambda_\alpha$ will denote the projection of $\lambda$ onto $\bC\alpha \subset \fa^*$ with respect to the Killing form.
\end{defn}
When $P$ and $Q$ are adjacent, the operator $A(Q, P, \sigma, \lambda)$ (and its normalized version) depends only on $\lambda_\alpha$. Indeed, $\theta({N_P}) \cap N_Q$ can be regarded as the ``$\theta(N)$'' of $G_\alpha = Z_G(\ker \alpha)$, which has split rank $1$ (see \cite[VII.6]{knappbeyondintro}). Then $a_P(\nbar) \in G_\alpha$, so that $a_P(\nbar)^\lambda = a_P(\nbar)^{\lambda_\alpha}$. Alternatively, this can be deduced by an induction in stages formula (see \cite[(1.4)]{delormePW}).

\begin{defn}\label{defn: Esigma}
	We define $\PW_{\text{divis}}(\fa^*, \End(p_F I_\sigma))$ to be the set of functions $f \in \PW(\fa^*, \End(p_F I_\sigma))$ with the following divisibility properties:
	\begin{enumerate}
		\item For each parabolic subgroup $P$ with Levi subgroup $MA$, there exists a (unique) function $f^P \in \PW(\fa^*, \End(p_F I_\sigma))$ such that
		\[\cA(P,P_n,\lambda)f(\lambda) = f^P(\lambda)\cA(P,P_n,\lambda).\]
		\item For each $w \in W_\sigma$, the map $f^P$ satisfies
		\[\cA(P, w, \lambda)f^P(\lambda) = f^P(w\lambda)\cA(P,w,\lambda).\]
		\item  Let $P$ and $Q$ be adjacent parabolic subgroups with Levi subgroup $MA$. The map $\lambda \mapsto f^P(\lambda)\cA(Q,P,\lambda)^{-1}$, initially meromorphic on $\fa^*$, extends to a holomorphic function on a neighborhood of $\overline{\fa^*_+}$.
	\end{enumerate}
\end{defn}
Properties $1$ and $2$ are based on the fact that, for $\phi \in \cS(G,F)$, we have the intertwining relations {\small
	\[\small \cA(P,Q,\lambda)\pi^Q_{\sigma,\lambda}(\phi) = \pi^P_{\sigma,\lambda}(\phi) \cA(P,Q,\lambda), \quad \cA(P,w,\lambda)\pi^P_{\sigma,\lambda}(\phi) =\pi^P_{\sigma,w\lambda}(\phi)\cA(P,w,\lambda).\]
}An important step in the proof of Theorem \ref{thm: factoring thm} is to show that elements of $\pi_n(\cJ_n)$ satisfy Property 3.

In the following, we set $\sigma = \sigma_n$, and we assume $F$ contains $A(\sigma)$.
\begin{thm}[Factoring Theorem]\label{thm: true factoring thm}
	\[\PW_{\emph{\text{divis}}}(\fa^*, \End(p_FI_\sigma))\subseteq \pi_\sigma(R(\fg,F)\cS(G,\minK{\sigma})R(\fg,F)).\]
\end{thm}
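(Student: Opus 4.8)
The plan is to follow Delorme's strategy in \cite[Section 2]{delormePW}, adapting each step to $\cS(G)$ using the rapid-decrease analogues developed in Section \ref{sec: polynomials}. Fix $f \in \PW_{\text{divis}}(\fa^*, \End(p_F I_\sigma))$. The goal is to produce $\phi \in \cS(G,\minK{\sigma})$ and elements $h_1, h_2 \in R(\fg, F)$ (or finite sums thereof) such that $\pi_\sigma(h_1 * \phi * h_2) = f$. The essential mechanism is that the normalized intertwining operators $\cA(P, P_n, \lambda)$, $\cA(P, w, \lambda)$ are \emph{rational} on $p_F I_\sigma$ (Theorem \ref{thm: norminterwiner}), so Properties 1--3 in Definition \ref{defn: Esigma} are genuine polynomial divisibility conditions; the Divisibility Theorem (proved alongside) should show that $\pi_\sigma(\cJ_n)$ is exactly the set of $f$ satisfying these conditions, so that $f$ already has the form $\pi_\sigma(\phi_0)$ for some $\phi_0 \in \cJ_n$.

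First I would reduce, via Property 1, to the matrix blocks indexed by minimal $K$-types: writing $f^{P_n} = f$ and using Properties 1 and 2, one transfers all the data of $f$ to a single $W_\sigma$-equivariant family, and then uses Theorem \ref{thm: voganclassification} together with the $R$-group description (Theorems \ref{thm: WandRGroups}, \ref{thm: hatr mu nu}) to see that the restriction $p_{\minK{\sigma}} f^{P_n} p_{\minK{\sigma}}$ determines $f$ up to the action of $R(\fg, F)$ on the left and right --- this is precisely where the ``factoring through $p_n$'' happens. Concretely, one exhibits polynomials in $\bC[\fa^*]$ realizing the passage from the full block structure on $p_F I_\sigma$ down to $p_{\minK{\sigma}} I_\sigma$, exactly as in \cite[(2.x)]{delormePW}, using that the intertwining operators are rational and nowhere singular on the relevant region after normalization. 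Then Lemma \ref{lem: polydivision} guarantees the quotients stay in $\PW$, and Lemma \ref{lem: hcparamrestrictionthing} together with Theorem \ref{thm: Rais Theorem} handles the Weyl-invariance bookkeeping.

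Second I would invert the Fourier transform on the minimal-$K$-type block. By Theorem \ref{thm: PW on lowktypes} (more precisely its block form \eqref{eq: minKfulldecomp}), $\pi_\sigma$ restricted to $p_{\minK{\sigma}} \cS(G,\minK{\sigma}) p_{\minK{\sigma}}$ surjects onto $\PW(\fa^*, \End(p_{\minK{\sigma}} I_\sigma))^{W_\sigma}$; so the reduced datum from the previous paragraph is hit by some $\phi \in \cS(G, \minK{\sigma})$. Reassembling with the Hecke-algebra elements constructed above yields $h_1 * \phi * h_2$ with $\pi_\sigma(h_1 * \phi * h_2) = f$, which gives the desired inclusion. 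One must check that all the ``polynomial multiplier'' manipulations are compatible with convolution by $R(\fg, F)$; this uses \eqref{eq-heckealgasunivalg}, the identification of $\cU(\fg)$-action with $\pi_\sigma(z)$ acting by $Q(\Lambda_\sigma + \lambda)$ via the Harish-Chandra isomorphism (as in the proof of Lemma \ref{lem: fourier transform S(G)}), and the fact that $\cJ_n$ is an $R(\fg,F)$-bimodule.

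The main obstacle, as in Delorme's original argument, is verifying Property 3 --- the holomorphic extension of $\lambda \mapsto f^P(\lambda) \cA(Q, P, \lambda)^{-1}$ near $\overline{\fa^*_+}$ --- and, dually, proving the Divisibility Theorem that elements of $\pi_\sigma(\cJ_n)$ genuinely satisfy all three properties. For $C_c^\infty(G)$ this rests on delicate estimates on the growth of matrix coefficients and on Arthur's/Delorme's analysis of the behavior of the intertwining operators at the walls; the point for us is that these estimates only ever need upper bounds of polynomial type against $\|g\|^N$, which the definition of $\cS(G)$ supplies in abundance (indeed more generously than compact support). So the adaptation is ``soft'': wherever Delorme integrates against a compactly supported $\phi$ and uses finiteness trivially, we integrate against $\phi \in \cS(G)$ and invoke the defining seminorms \eqref{eq: SG seminorms} together with the estimate $\|\pi^P_{\sigma,\lambda}(g)\| \leq \|g\|^{2|\Re\lambda|}$ from Lemma \ref{lem: fourier transform S(G)}. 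I would therefore structure the proof to quote \cite[Proposition 1]{delormePW} step by step, at each stage pointing out that the only analytic input is a polynomial-growth bound that $\cS(G)$ satisfies, and filling in the three genuinely new ingredients (Lemmas \ref{lem: polydivision}, \ref{lem: hcparamrestrictionthing} and Theorem \ref{thm: Rais Theorem}) where Delorme uses the corresponding compact-support facts.
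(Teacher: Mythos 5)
Your proposal has the right broad strategy — quote Delorme step by step and substitute the rapid-decrease polynomial-division lemmas where he uses compact-support facts — and your second and third paragraphs correctly locate the reduction to minimal $K$-type blocks and the surjectivity supplied by Theorem~\ref{thm: PW on lowktypes}. But there are two concrete problems. First, your opening move is circular: you assert that the Divisibility Theorem shows $\pi_\sigma(\cJ_n)$ is \emph{exactly} $\PW_{\text{divis}}$, hence $f = \pi_\sigma(\phi_0)$ for some $\phi_0 \in \cJ_n$. The Divisibility Theorem only gives the inclusion $\pi_\sigma(\cJ_n) \subseteq \PW_{\text{divis}}$; the reverse inclusion is precisely what follows from the Factoring Theorem you are trying to prove (via the trivial lemma $\pi_\sigma(R(\fg,F)\cS(G,\minK{\sigma})R(\fg,F)) \subseteq \pi_\sigma(\cJ_n)$). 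You cannot use that equality as an input.

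Second, the heart of the proof is missing: you gesture at ``exhibiting polynomials realizing the passage'' but never supply the mechanism. The paper (following Delorme) introduces \emph{factoring data} $(\mu_i, v_i, \phi_i)$ and $(\tmu_j, \tv_j, \tphi_j)$, builds the rectangular operators $\Phi^P$ and $\tPhi^P$ with $\det \Phi^P = \Psi^P \bbar^P(-\lambda)$ and $\det \tPhi^P = \tPsi^P b^P$ (Lemma~\ref{lem: specializeddelorme}), solves $\Psi^P \tPsi^P u^P = \Phi^P M^P \tPhi^P$ by Cramer's rule and applies Lemma~\ref{lem: polydivision} to show $M^P_{ij} \in \PW(\fa^*)$ (Lemma~\ref{lem: main factoring lemma}), and — crucially — chooses finitely many families of factoring data with $\sum_m \Psi_m \equiv 1$ and $\sum_r \tPsi_r \equiv 1$ (possible because the $\Psi^P$ span $\bC[\fa^*]^{W_\sigma^0}$). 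None of this appears in your outline, and without the $\sum \Psi_m = 1$ trick there is no way to reassemble $u$. You also miss a point where the paper deliberately deviates from Delorme: the $M^P_{ij}$ produced by Cramer's rule are \emph{not} in $\PW(\fa^*)^{W_\sigma^0}(\hat{r}_{\mu_i\mu_j})$ in general (contrast \cite[Lemma~6]{delormePW}); $W_\sigma$-invariance has to be imposed at the very end by averaging with $\cA(P_n,w,\lambda)$, using Property~2 and the compatibility of $\pi_\sigma(R(\fg,F))$ with the $W_\sigma$-action. Your ``Weyl-invariance bookkeeping via Lemma~\ref{lem: hcparamrestrictionthing} and Theorem~\ref{thm: Rais Theorem}'' does not replace that averaging step.
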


\begin{thm}[Divisibility Theorem]\label{thm: divisibility thm}
	\[\pi_\sigma(\cJ_n) \subseteq \PW_{\emph{\text{divis}}}(\fa^*, \End(p_FI_\sigma)).\]
\end{thm}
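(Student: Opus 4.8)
The plan is to verify the three divisibility conditions of Definition~\ref{defn: Esigma} for $f = \pi_\sigma(\phi)$ (with $\sigma = \sigma_n$, so $f = f^{P_n}$ in the notation below), following closely Delorme's argument for the analogous statement \cite[Proposition~2]{delormePW}. Properties~1 and 2 are essentially formal. For each parabolic $P$ with Levi $MA$ set $f^P := \pi^P_\sigma(\phi)$; by Lemma~\ref{lem: fourier transform S(G)} applied to the cuspidal pair $(P,\sigma)$ we have $f^P \in \PW(\fa^*,\End(p_F I_\sigma))$, and the first intertwining relation for $\phi$ recorded after Definition~\ref{defn: Esigma}, namely $\cA(P,P_n,\lambda)\,\pi^{P_n}_{\sigma,\lambda}(\phi) = \pi^{P}_{\sigma,\lambda}(\phi)\,\cA(P,P_n,\lambda)$, is exactly the identity $\cA(P,P_n,\lambda)f(\lambda) = f^P(\lambda)\cA(P,P_n,\lambda)$ of Property~1; uniqueness of $f^P$ is clear since $\cA(P,P_n,\lambda)$ is invertible for generic $\lambda$ by Theorem~\ref{thm: norminterwiner}. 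Property~2 is the second intertwining relation $\cA(P,w,\lambda)\,\pi^P_{\sigma,\lambda}(\phi) = \pi^P_{\sigma,w\lambda}(\phi)\,\cA(P,w,\lambda)$ for $w \in W_\sigma$, which reads $\cA(P,w,\lambda)f^P(\lambda) = f^P(w\lambda)\cA(P,w,\lambda)$.

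Property~3 carries all the content, and is the only place where the hypothesis $\phi \in \cJ_n$ enters. Fix adjacent parabolics $P$ and $Q$ with reduced root $\alpha$. Using the intertwining relation for $\phi$ together with $\cA(P,Q,\lambda)\cA(Q,P,\lambda) = \Id$ (Theorem~\ref{thm: norminterwiner}) one gets $f^P(\lambda)\cA(Q,P,\lambda)^{-1} = \cA(P,Q,\lambda)f^Q(\lambda)$, so, $f^Q$ being entire, it suffices to show that the a priori meromorphic operator function $\cA(P,Q,\lambda)f^Q(\lambda)$ is holomorphic on a neighbourhood of $\overline{\fa^*_+}$. Since $P$ and $Q$ are adjacent, $\cA(Q,P,\lambda)$ depends only on $\lambda_\alpha$ and is computed inside the split-rank-one group $G_\alpha = Z_G(\ker\alpha)$, so the problem reduces to a rank-one computation; there $\cA(P,Q,\lambda)$ restricted to $p_F I_\sigma$ is rational (Theorem~\ref{thm: norminterwiner}), with poles located on finitely many hyperplanes $\lambda_\alpha = \lambda_0$, occurring precisely at the reducibility points of the rank-one induced module, which lie off $i\fa^*_0$. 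At such a $\lambda_0 \in \overline{\fa^*_+}$ one analyses the residue of $\cA(P,Q,\cdot)$ using the Vogan--Zuckerman/Langlands description of the constituents of $I^P_{\sigma,\lambda_0}$ (Theorem~\ref{thm: voganclassification}), the $W$- and $R$-group structure of Theorems~\ref{thm: WandRGroups} and \ref{thm: hatr mu nu}, and the fact that the normalized intertwiners act by constants on minimal $K$-types. This shows that the residue of $\cA(P,Q,\lambda)f^Q(\lambda)$ at $\lambda_0$ is controlled by a $G$-subquotient of $I^P_{\sigma,\lambda_0}$ all of whose irreducible constituents belong, in the sense of Theorem~\ref{thm: voganthingo}, to cuspidal pairs lying strictly above $(P_n,\sigma_n)$ in our total order; since $\phi \in \cJ_n$ its Fourier transform vanishes at all those pairs, and transporting this vanishing through the relevant intertwiner forces the residue to vanish. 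Iterating the argument removes poles of higher order, and Property~3 follows.

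The hard part is precisely this last step: carrying out the rank-one reduction cleanly, computing the residue at each reducibility point $\lambda_0 \in \overline{\fa^*_+}$, and, crucially, verifying that every singular constituent there is indeed associated to a cuspidal pair appearing later in the ordering, so that $\phi \in \cJ_n$ annihilates it and the Knapp--Stein normalization then cancels the pole to all orders. This is the technical heart of Delorme's proof of \cite[Proposition~2]{delormePW}. The passage from $C_c^\infty(G)$ to $\cS(G)$ introduces nothing new: every place where Delorme divides by an invariant polynomial or invokes a Paley--Wiener estimate is covered here by Lemma~\ref{lem: polydivision}, Theorem~\ref{thm: Rais Theorem}, Lemma~\ref{lem: hcparamrestrictionthing} and the estimate of Lemma~\ref{lem: fourier transform S(G)}, all of which take the same form as in the compactly supported setting.
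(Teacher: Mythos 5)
Your handling of Properties 1 and 2 as formal intertwining relations matches the paper, and you correctly locate all the content in Property 3. But the argument you sketch for Property 3 contains a genuine error and omits the key technical input that the paper uses.

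The error: you claim that the residue of $\cA(P,Q,\lambda)f^Q(\lambda)$ at a reducibility point $\lambda_0 \in \overline{\fa^*_+}$ ``is controlled by a $G$-subquotient of $I^P_{\sigma,\lambda_0}$ all of whose irreducible constituents belong \dots to cuspidal pairs lying strictly above $(P_n,\sigma_n)$.'' This cannot happen: by the last sentence of Theorem~\ref{thm: voganclassification}, \emph{every} irreducible subquotient of $I^P_{\sigma,\lambda_0}$ (for $\lambda_0 \in \overline{\fa^*_{P,+}}$) contains a minimal $K$-type of $I_\sigma$, hence lies in the class $[\sigma_n]$ itself, not strictly above it. So the vanishing of $\pi_\tau(\phi)$ for $[\tau] > [\sigma_n]$ does not directly apply to subquotients of $I^P_{\sigma,\lambda_0}$. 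The relevant module is not a subquotient of $I^P_{\sigma,\lambda_0}$ at all; it is the kernel of the $N$th derivative of the normalized intertwiner, $\ker\bigl(\Delta^N_{\lambda_\alpha}\cA(Q,P,\lambda_\alpha)\bigr)$, sitting inside the derivative module $(\Delta^N_{\lambda_\alpha}\pi^P_{\sigma,\lambda}, I_\sigma^{\oplus(N+1)})$. What one can say is that, because the normalized intertwiner is constant and nonzero on minimal $K$-types, so is its $\Delta^N$-derivative, and therefore this kernel contains no $K$-type from $A(\sigma_n)$; hence all its $K$-types have length $> \|\sigma_n\|$.

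The missing ingredient is the Delorme--Souaifi theory of derivatives (their Theorem 3(ii), recalled here as Theorem~\ref{thm: DelSou}): any admissible $(\fg,K)$-module whose $K$-types all have length $> \|\sigma_n\|$ is a subquotient of derivatives of principal series associated to cuspidal pairs with minimal $K$-type length $> \|\sigma_n\|$, i.e.\ to pairs $[\tau] > [\sigma_n]$. It is this theorem that transports the vanishing of $\pi_\tau(\phi)$ (for $[\tau] > [\sigma_n]$) to the statement that $\phi$ acts by zero on the kernel of $\Delta^N_{\lambda_\alpha}\cA(Q,P,\lambda_\alpha)$; this is Theorem~\ref{thm: delsouconsequence}. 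Your ``iterating the argument removes poles of higher order'' is exactly what the $\Delta^N$ formalism (together with Lemmas~\ref{lem: generaldivisibility} and \ref{lem: supergeneraldivisibility}) makes precise, but it is not a routine iteration of a rank-one residue computation: the higher-order kernels are not subquotients of a single principal series, and without Delorme--Souaifi the vanishing does not follow.
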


The following lemma implies that the above inclusions are equalities.
\begin{lem}
	We have
	\[\pi_\sigma(R(\fg,F)\cS(G,\minK{\sigma})R(\fg,F)) \subseteq \pi_\sigma(\cJ_n).\]
\end{lem}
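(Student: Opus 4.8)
The plan is to show that $\cS(G,\minK{\sigma})$ sits inside $\cJ_n$, and then to use the fact that $\cJ_n$ is a two-sided ideal (in particular an $R(\fg,F)$-bimodule). First I would recall that $\cJ_n$ was defined as $\bigcap_{m>n}\ker\pi_m$, where the intersection is over the totally ordered $G$-conjugacy classes $[\sigma_m]$ with $\minK{\sigma_m}$ having length $\|\sigma_m\|$, ordered so that $[\sigma_1]<[\sigma_2]<\cdots$. By Theorem \ref{thm: voganthingo} the sets $\minK{\sigma_m}$ partition $\widehat K$, and since $[\sigma_n]<[\sigma_m]$ for $m>n$ forces $\|\sigma_n\|\le\|\sigma_m\|$, every $K$-type appearing in $\minK{\sigma_m}$ for $m>n$ has length at least that of the minimal $K$-types of $I_{\sigma_m}$, while the $K$-types in $\minK{\sigma}=\minK{\sigma_n}$ belong to a different block of the partition.

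The key point is therefore: if $m>n$, then $\pi_m$ kills $p_{\minK{\sigma_n}}\cS(G)p_{\minK{\sigma_n}}$, i.e.\ $\pi_m\bigl(\cS(G,\minK{\sigma})\bigr)=0$. Indeed, $\pi_m(\phi)(\lambda)$ is an endomorphism of $p_F I_{\sigma_m}$, and for $\phi\in p_{\minK{\sigma_n}}\cS(G)p_{\minK{\sigma_n}}$ the image of $\pi_m(\phi)(\lambda)$ lies in $p_{\minK{\sigma_n}}I_{\sigma_m}$ and it vanishes on the orthogonal complement of $p_{\minK{\sigma_n}}I_{\sigma_m}$; but by Theorem \ref{thm: voganthingo} the $K$-types in $\minK{\sigma_n}$ do \emph{not} occur among $\minK{\sigma_m}$, and more is true: since every irreducible subquotient of $I^{P_m}_{\sigma_m,\lambda}$ contains a minimal $K$-type of $I_{\sigma_m}$ (Theorem \ref{thm: voganclassification}), and those minimal $K$-types all have length $\|\sigma_m\|>\|\sigma_n\|$ when $[\sigma_m]>[\sigma_n]$ with strictly larger length — while if $\|\sigma_m\|=\|\sigma_n\|$ but $[\sigma_m]\ne[\sigma_n]$ the blocks $\minK{\sigma_m}$ and $\minK{\sigma_n}$ are disjoint — we conclude $p_{\minK{\sigma_n}}I_{\sigma_m}=0$. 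Hence $\pi_m(\cS(G,\minK{\sigma}))=0$ for all $m>n$, so $\cS(G,\minK{\sigma})\subseteq\cJ_n$.

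Next I would invoke that $\cJ_n$ is a two-sided ideal of $\cS(G,F)$, hence in particular closed under the convolution action of $R(\fg,F)\subseteq\cS(G,F)$ on both sides (noting $R(\fg,F)$ acts by multipliers, or simply that $R(\fg,F)\subseteq C^\infty_c$-type distributions act on $\cS(G)$ preserving ideals). Therefore
\[
R(\fg,F)\,\cS(G,\minK{\sigma})\,R(\fg,F)\subseteq R(\fg,F)\,\cJ_n\,R(\fg,F)\subseteq\cJ_n,
\]
and applying $\pi_\sigma=\pi_n$ gives $\pi_\sigma\bigl(R(\fg,F)\cS(G,\minK{\sigma})R(\fg,F)\bigr)\subseteq\pi_\sigma(\cJ_n)$, as desired.

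I expect the only genuinely substantive step to be the verification that $p_{\minK{\sigma_n}}I_{\sigma_m}=0$ for $m>n$, i.e.\ that the minimal $K$-types of $I_{\sigma_n}$ do not occur in $I_{\sigma_m}$; this is exactly where Vogan's partition theorem (Theorem \ref{thm: voganthingo}) together with the ordering convention on $[\sigma_m]$ is used, and care is needed only to handle the case of equal lengths $\|\sigma_m\|=\|\sigma_n\|$ where one must appeal to the disjointness of distinct blocks $\minK{\sigma_m}\ne\minK{\sigma_n}$ rather than to a length comparison. Everything else is formal bookkeeping with ideals and the bimodule structure.
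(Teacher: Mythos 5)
Your proof is correct and takes essentially the same route as the paper, whose one-line argument is that $\pi_\tau(p_{\minK\sigma})=0$ when $[\tau]>[\sigma]$; this is exactly the fact you verify (though your appeal to Theorem \ref{thm: voganclassification} is unnecessary---the definition of $\minK{\sigma_m}$ as the set of $K$-types of \emph{minimal} length in $I_{\sigma_m}$ already gives that all $K$-types of $I_{\sigma_m}$ have length $\ge\|\sigma_m\|$, which together with Theorem \ref{thm: voganthingo} handles both the strict- and equal-length cases). The additional observation that $\cJ_n$ is an $R(\fg,F)$-bimodule is also what makes the paper's one-liner suffice, so the content is the same.
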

\begin{proof}
	This is a consequence of the fact that $\pi_{\tau}(p_{\minK{\sigma}}) = 0$ when $[\tau] > [\sigma]$. 
\end{proof}

The two theorems and the lemma imply that
\[\pi_\sigma(\cJ_n) = \PW_{\text{divis}}(\fa^*, \End(p_FI_\sigma)) =\pi_\sigma(R(\fg,F)\cS(G,\minK{\sigma})R(\fg,F)),\]
which implies Theorem \ref{thm: factoring thm}.

In the $C_c^\infty(G)$ case, the Divisibility and Factoring Theorems are essentially \cite[Proposition 1, and (3.8)]{delormePW}, and our proofs are almost identical. In fact, the only real difference is the use of polynomial division on $ \PW(\fa^*)$ (see Lemma \ref{lem: polydivision}), whose proof is practically the same as in the $C_c^\infty(G)$ case (due to Clozel and Delorme \cite{delormeclozel2}).

It is illuminating to view these two theorems in the context of the spherical principal series, where the minimal $K$-type is the trivial $K$-type, which is the case $n = 1$. We will consider the example $G = \SL(2,\bR)$.

\begin{ex}
	Let $G = \SL(2, \bR)$ and $K = \SO(2)$. The two relevant parabolic subgroups are the minimal parabolic subgroup $P$ consisting of upper triangle matrices in $G$, and $G$ itself. We set 
	\[M = \{\pm \Id\}, \quad A =\{\diag(e^t, e^{-t}): t \in \bR\}, \quad  N = \left\{\begin{bmatrix*}1 &s\\0&1\end{bmatrix*}: s\in \bR\right\},\]
	so that $P = MAN$ is the Langlands decomposition of $P$.
	
	We identify $\Khat$ with $\bZ$. We then set $F = \{-2, 0, 2\} \subset \widehat{K}$. There are three relevant cuspidal pairs whose corresponding principal series contains $K$-types in $F$.
	\begin{itemize}
		\item $(P, \sigma)$, where $\sigma = \sigma_1$ be the trivial representation of $M$. The corresponding principal series $I_\sigma$ is known as the spherical principal series.
		\item $(G, \sigma_2)$, where $\sigma_2 = (\pi_{D_{2,+}}, D_{2, +})$ is the discrete series whose minimal $K$-type is $2 \in \widehat{K}$. In this case, $\pi_{\sigma_2} = \pi_{D_{2,+}}$.
		\item $(G, \sigma_3)$, where $\sigma_3 = (\pi_{D_{2,-}}, D_{2, -})$ is the discrete series whose minimal $K$-type is $-2 \in \widehat{K}$. In this case, $\pi_{\sigma_3} = \pi_{D_{2,-}}$.
	\end{itemize}
	We also will use the fact that we have an embedding of representations,
	\[(\pi_{D_{2,+}} \oplus \pi_{D_{2,-}}, D_{2,+} \oplus D_{2, -}) \to (\pi^P_{\sigma, 1}, I_\sigma).\]
	
	Given $k_\theta = e^{i\theta}\in K$, we define $e_n(k_\theta) = e^{-in\theta}$. As a $K$-representation, $I_\sigma$ consists of even $K$-types with multiplicity $1$. That is, $I_\sigma$ is spanned by $e_n$ for even $n$. Accordingly, $p_F I_\sigma$ has ordered basis $e_2, e_0, e_{-2}$, while $p_F D_{2,+}$ has basis $e_2$ and $p_F D_{2,-}$ has basis $e_{-2}$ (which we will identify inside of $I_\sigma$). Using this ordered basis, we identify $p_F I_\sigma$ with $\bC^3$.
	
	We will parametrize $\fa^*$ as follows. Set $H_s = \diag(1, -1) \in \fa_0$. We then identify $\fa^*$ with $\bC$ via $\lambda \mapsto \lambda(H_s)$. Under our identifications, the positive root in $\Delta(\fg_0, \fa_0)$ is the usual $\alpha = 2$.
	
	With the above identifications, elements of $\pi_\sigma(\cS(G,F))$ identify with maps $(z \mapsto f(z)) \in \PW(\bC, M_3(\bC))$. We will now write down this image and verify Theorems \ref{thm: true factoring thm} and \ref{thm: divisibility thm} in this context.
	
	It is simpler to compute $\pi_\sigma(R(\fg, F))$ by completely algebraic means. From the identification $R(\fg, F) = \cU(\fg) \otimes_{\cU(\fk)} R(K)$, and from the formulas given in \cite[Proposition 31, p. 136]{varadarajanbook}, we can explicitly compute that 
	\[\pi_\sigma(R(\fg, F)) = \left\{\begin{bmatrix*}p_{2,2}(z^2) & (z+1)p_{2,0}(z^2)& (z^2-1)p_{2,-2}(z^2)\\
		(z-1)p_{0,2}(z^2)&p_{0,0}(z^2)&(z-1)p_{0,-2}(z^2)\\
		(z^2-1)p_{-2,2}(z^2) & (z+1)p_{-2,0}(z^2)& p_{-2,-2}(z^2)
	\end{bmatrix*}\right\},\]
	where  $p_{i,j} \in \bC[z]$ for $i,j \in F$. The above matrices are with respect to the ordered basis $e_2, e_0, e_{-2}$ of $p_F I_\sigma$.
	
	It can also be shown that $\pi_\sigma(\cS(G,F))$ has the same form as above, with the $p_{i,j}(z^2)$ replaced by even functions $f_{i,j}(z)$ in $\PW(\bC)$. We see that the entries have particular algebraic relations. For example, the top right entry is an even function with guaranteed zeros at $z = \pm 1$.
		
	In our case of $\SL(2,\bR)$, it is easy to see where the algebraic relations come from. First of all, the Weyl group $W_\sigma = W(\fg_0, \fa_0)$ consists of two elements, where the nontrivial element $w$ acts on $\fa^* \cong \bC$ via $z \mapsto -z$. The corresponding action of $\cA(P, w, \sigma, z)$ (where we have normalized $A(P,w, \sigma,z)$ with respect to the trivial $K$-type, and restricted the operator to $p_F I_\sigma$) is identified as the matrix
	\[\cA(P,w,\sigma, z) =\begin{bmatrix*}\dfrac{z-1}{z+1}&0&0\\0&1&0\\0&0&-\dfrac{z-1}{z+1}\end{bmatrix*}.\]
	on $p_F I_\sigma$ with respect to the basis $e_2, e_0, e_{-2}$. The algebraic conditions follow from the intertwining relation
	\[f(z) = \cA(P,w,-z)f(-z)\cA(P,w,z).\]
	In other words, in this case it holds that $\pi_\sigma(\cS(G,F)) = \PW(\fa^*, \End(p_F I_\sigma))^{W_\sigma}$ (we caution that this is does not necessarily hold for more general groups).
	
	Let us now describe the ideal $\pi_\sigma(\cJ_1)$. The ideal $\cJ_1$, which is $\ker(\pi_{\sigma_2}) \cap \ker(\pi_{\sigma_3})$, is given (under Fourier transform) by
	\[\pi_\sigma(\cJ_1) = \left\{\begin{bmatrix*}(z^2-1)f_{2,2}(z) & (z+1)f_{2,0}(z)& (z^2-1)f_{2,-2}(z)\\
		(z-1)f_{0,2}(z)&f_{0,0}(z)&(z-1)f_{0,-2}(z)\\
		(z^2-1)f_{-2,2}(z) & (z+1)f_{-2,0}(z)& (z^2-1)f_{-2,-2}(z)
	\end{bmatrix*}\right\},\]
	where the functions $f_{i,j}$ are even functions in $\PW(\bC)$. Indeed, when $\phi \in \cJ_1$, we see that (identifying $D_{2, \pm}$ inside of $I_{\sigma,1}$)
	\[f_{2,2}(1) = p_2\pi^P_{\sigma,1}(\phi)p_2 = \pi_{D_{2,+}}(\phi)p_2 = 0,\]
	and similarly we have $f_{-2,-2}(1) = 0$. Because both $f_{2,2}$ and $f_{-2,-2}$ are even, we can therefore factor out $(z^2-1)$ from these functions.
	
	Now, let $\overline{P} = \theta(P)$ be the opposite parabolic subgroup (consisting of lower-triangular matrices in $G$). Again, we write $\cA(\overline{P}, P, \lambda)$ for the normalization of $A(\overline{P}, P, \sigma, \lambda)$ with respect to the trivial $K$-type, restricted to $p_F I_\sigma$. The Divisibility Theorem states that $\pi_\sigma(\phi)(z)\cA(\overline{P}, P, z)^{-1}$ is holomorphic for $\Re z \geq 0$. This is easily verified by the explicit description of $\pi_\sigma(\cJ_\sigma)$ and the fact that
	\[\cA(\overline{P}, P, z) = \begin{bmatrix*}\dfrac{z-1}{z+1}&0&0\\0&1&0\\0&0&-\dfrac{z-1}{z+1}\end{bmatrix*}.\]
	
	The Factoring Theorem is the statement that the whole of $\pi_\sigma(\cJ_1)$ can be obtained by applying elements of the Hecke algebra on the left and right to elements of the form
	\[\begin{bmatrix*}0&0&0\\0&f_{0,0}(z)&0\\0&0&0\end{bmatrix*},\]
	and taking the span. The above is easily verified by our explicit calculation of $\pi_\sigma(R(\fg, F))$ and $\pi_\sigma(\cJ_1)$. Of course, from our calculation of $\pi_\sigma(\cJ_1)$ we can see directly that  $\pi_\sigma(\cJ_1) = \pi_\sigma(\cJ_1)p_{\{0\}} \pi_\sigma(\cJ_1)$ (which is Theorem \ref{thm: Jsig minK} in our case).
\end{ex}

\newcommand{\ldot}{{}_\cdot}
\newcommand{\ldott}{{}_\cdot {}_\cdot}
\subsection{Proof of the Divisibility Theorem}
The proof of the Divisibility Theorem relies on the theory of derivatives for holomorphic families of representations, developed by Delorme and Souaifi. See \cite{delormesouaifi}, and see \cite{vdbSouaifi} for a systematic treatment of this theory. The Divisibility Theorem is essentially a consequence of the following theorem of Delorme and Souaifi.
\begin{thm}[See {\cite[Theorem 3 (ii)]{delormesouaifi}}]\label{thm: DelSou}
	Let $X$ be an admissible $(\fg, K)$-module whose $K$-types have length larger than $R$. Then $X$ is a subquotient of a direct sum of (successive derivatives of) principal series representations, each of which contain only $K$-types of length larger than $R$.
\end{thm}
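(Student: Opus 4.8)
The plan is to deduce the statement from the structure theory of the category of Harish-Chandra modules, using a truncated ``standard projective'' together with Vogan's minimal-$K$-type classification to guarantee that all principal series that intervene have only $K$-types of length $>R$. First I would reduce to a block. The submodule of $X$ generated by $p_{F'}X$ for a finite $F'\subset\Khat$ is finitely generated, hence $\cZ(\fg)$-finite and of finite length, and $X$ is the directed union of such submodules, so $X$ is a quotient of their direct sum; since the $K$-types can only shrink, we may assume $X$ has finite length. Decomposing into generalized $\cZ(\fg)$-eigenspaces, which are submodules, we may assume further that there is a Harish-Chandra parameter $\Lambda\in\fh^*$ and an integer $N$ with $(z-\chi_\Lambda(z))^N\cdot X=0$ for all $z\in\cZ(\fg)$, still with all $K$-types of length $>R$. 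Let $\cC_{\Lambda,R}$ denote the (abelian, finite-length) category of $(\fg,K)$-modules with generalized infinitesimal character $\Lambda$ all of whose $K$-types have length $>R$; it is a Serre subcategory (visibly closed under subquotients and extensions) with finitely many simple objects, and $X\in\cC_{\Lambda,R}$.

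Next I would set up the relevant principal series and their derivatives. Each simple object $J$ of $\cC_{\Lambda,R}$ has all $K$-types of length $>R$; by the Langlands classification reduced to discrete-series inducing data, $J$ is the distinguished irreducible quotient of a principal series $I^P_{\sigma,\lambda_J}$ attached to a cuspidal pair $(P,\sigma)$ with $\chi(\sigma,\lambda_J)=\Lambda$. By Theorem~\ref{thm: voganclassification}, $J$ contains a $K$-type lying in $\minK{\sigma}$, whose length is $\|\sigma\|$; since this $K$-type occurs in $J$ we get $\|\sigma\|>R$, and hence, by the definition of $\|\sigma\|$ as the minimal length of a $K$-type of $I_\sigma$, \emph{every} $K$-type of $I^P_{\sigma,\lambda}$ has length $>R$ for every $\lambda\in\fa^*$. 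For $m\ge 0$ let $T^{(m)}_{\sigma,\lambda_J}$ be the jet of order $m$ at $\lambda_J$ of the holomorphic family $\lambda\mapsto I^P_{\sigma,\lambda}$, realized on $I_\sigma\otimes\cO_{\lambda_J}/\fm_{\lambda_J}^{m+1}$ (the $m$-th ``successive derivative'' of the principal series). Because the $K$-module structure of $I^P_{\sigma,\lambda}$ is independent of $\lambda$, the module $T^{(m)}_{\sigma,\lambda_J}$ has the same $K$-types as $I_\sigma$, with multiplicities multiplied by $\dim\cO/\fm^{m+1}$, so all of length $>R$; and $(z-\chi_\Lambda(z))^{m+1}$ annihilates it, so it is finitely generated, admissible, lies in $\cC_{\Lambda,R}$, and surjects onto $I^P_{\sigma,\lambda_J}$ and hence onto $J$.

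The main work is then the following. Choose a finite $F'\subset\{\gamma:\|\gamma\|>R\}$ with $X=R(\fg,K)\,p_{F'}X$, and let $\Pi$ be a finitely generated projective $(\fg,K)$-module surjecting onto $X$ whose generating $K$-types lie in $F'$ (for instance a finite sum of copies of $R(\fg,K)\,p_{F'}$, using the Hecke-algebra picture of Definition~\ref{defn: hecke algebra}). Then $X$ is a quotient of $\Pi/\fn_\Lambda^N\Pi$, which is of finite length with generalized infinitesimal character $\Lambda$ (here $\fn_\Lambda=\ker\chi_\Lambda$); note that $\Pi$ itself has ``bad'' $K$-types, which is why we must truncate and pass to a quotient. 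Let $\overline\Pi$ be the largest quotient of $\Pi/\fn_\Lambda^N\Pi$ lying in $\cC_{\Lambda,R}$ (it exists because $\cC_{\Lambda,R}$ is closed under subobjects and quotients); by maximality the surjection factors through $\overline\Pi$, so $X$ is a quotient of $\overline\Pi$. It therefore suffices to prove that, for $m$ large enough,
\[ \overline\Pi \ \text{is a subquotient of a finite direct sum of the modules}\ T^{(m)}_{\sigma,\lambda_J}. \]
This identification of the ``good truncated standard projective'' $\overline\Pi$ with jets of principal series attached to cuspidal pairs having $\|\sigma\|>R$ is precisely the content of \cite[Theorem~3]{delormesouaifi}; its proof combines Casselman's subrepresentation theorem and the fact that standard projectives are built, up to filtration, from principal series (the Harish-Chandra-module analogue of projectives in category $\cO$ admitting Verma-module flags) with Vogan's minimal-$K$-type classification (Theorem~\ref{thm: voganclassification}, \cite{voganbook}), which forces only the good principal series to appear in $\cC_{\Lambda,R}$, together with the theory of holomorphic families of representations, which packages the higher-order data into the jet modules. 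One checks that taking the jet order $m$ larger than the Loewy length of $\overline\Pi$ is enough, exactly as in the toy model $\fg=\fa$ abelian, where the claim reduces to the fact that a finite-dimensional $S(\fa)/\fm^N$-module is a quotient of a free one.

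Granting this, $X$ is a quotient of $\overline\Pi$, hence a subquotient of a finite direct sum of successive derivatives of principal series all of whose $K$-types have length $>R$; undoing the two reductions of the first step gives the statement for the original admissible $X$. Note that $m$ depends on $X$, so there is no single direct sum of derivatives that works for every $X$ — which is consistent with the conclusion being phrased in terms of subquotients of direct sums. The hard part is the displayed claim: producing $\overline\Pi$, up to subquotients, out of jets of the \emph{good} principal series. This is exactly where the interplay between minimal-$K$-type theory and the holomorphic-family machinery is essential; everything else is reduction and bookkeeping with infinitesimal characters and $K$-type lengths.
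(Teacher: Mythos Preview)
The paper does not prove this theorem at all: it is quoted verbatim from \cite[Theorem~3(ii)]{delormesouaifi} and used as a black box (its sole application is Theorem~\ref{thm: delsouconsequence}). So there is no ``paper's own proof'' to compare against.

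Your proposal is a reasonable outline of the architecture behind the Delorme--Souaifi argument, and the preliminary reductions you make --- to finite length, to a single generalized infinitesimal character, and the key observation via Theorem~\ref{thm: voganclassification} that any irreducible $J$ with all $K$-types of length $>R$ arises from a principal series $I^P_{\sigma,\lambda}$ with $\|\sigma\|>R$, whence \emph{all} $K$-types of $I_\sigma$ (and of every jet $T^{(m)}_{\sigma,\lambda}$) have length $>R$ --- are correct and are indeed the easy part. But your argument is not an independent proof: at the decisive step, showing that the truncated projective $\overline\Pi$ is a subquotient of a finite direct sum of jets $T^{(m)}_{\sigma,\lambda_J}$, you explicitly invoke \cite[Theorem~3]{delormesouaifi} itself. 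Since $\overline\Pi$ is just another object of $\cC_{\Lambda,R}$, this is circular unless you actually carry out the jet/filtration argument for $\overline\Pi$; the ingredients you list (Casselman, standard filtrations, holomorphic families) are the right ones, but listing them is not the same as executing the proof. In short, both you and the paper ultimately defer the substance to Delorme--Souaifi; you have added useful context and the correct reductions, but not a self-contained argument.
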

We define what is meant by differentiation below.

\begin{defn}[See \cite{delormePW} and \cite{vdbSouaifi}]
	Let $\Omega$ be an open subset of $\bC$. Given an operator-valued holomorphic function $A: \Omega \to \End(V)$, we define
	\[\Delta_z A: \Omega \to \End(V \oplus V)\]
	by the block matrix formula
	\begin{align*}
		\Delta_z A(z) &= \begin{bmatrix*}A(z) & A'(z) \\ 0& A(z)\end{bmatrix*}.
	\end{align*}
	
\end{defn}
\begin{lem}\label{lem: derivproductrule}
	Let $A,B: \Omega \to \End(V)$ be holomorphic. Then 
	\[\Delta_z(A(z)B(z)) = (\Delta_z A(z))(\Delta_z B(z)).\]
\end{lem}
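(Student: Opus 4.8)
The plan is to prove the identity by a direct $2\times 2$ block-matrix computation, the only substantive input being the Leibniz rule for operator-valued holomorphic functions. First I would record that if $A,B\colon\Omega\to\End(V)$ are holomorphic then so is their pointwise product $AB$, and $(AB)'=A'B+AB'$; this follows from the difference-quotient definition of the complex derivative together with the bilinearity and continuity of composition (or, after fixing a basis of $V$, from the ordinary product rule applied to matrix entries).

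With that established, I would simply multiply out the block matrices defining $\Delta_z A$ and $\Delta_z B$:
\begin{align*}
(\Delta_z A(z))(\Delta_z B(z)) &= \begin{bmatrix} A(z) & A'(z) \\ 0 & A(z)\end{bmatrix}\begin{bmatrix} B(z) & B'(z) \\ 0 & B(z)\end{bmatrix}\\
&= \begin{bmatrix} A(z)B(z) & A(z)B'(z)+A'(z)B(z) \\ 0 & A(z)B(z)\end{bmatrix}.
\end{align*}
By the definition of $\Delta_z$, the function $\Delta_z(A(z)B(z))$ is the block matrix whose two diagonal entries both equal $A(z)B(z)$ and whose upper-right entry is $(A(z)B(z))'$. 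Comparing upper-right entries and invoking $(AB)'=A'B+AB'$ then gives the asserted equality, while the remaining three entries match on the nose.

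I do not expect any real obstacle: once the product rule is in hand the statement is purely formal. If one prefers a conceptual phrasing, $\Delta_z$ assigns to a holomorphic family its first-order jet, regarded as an operator on $V\oplus V$ over the dual numbers $\bC[\varepsilon]/(\varepsilon^2)$, and the lemma simply expresses the multiplicativity of jet-formation; but the block computation above is the quickest way to record it, and I would present the proof that way.
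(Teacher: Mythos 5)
Your proof is correct and uses the same idea as the paper, which simply cites the Leibniz rule; you have merely spelled out the block-matrix multiplication that the paper leaves implicit.
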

\begin{proof}
	This follows from the Leibniz rule.
\end{proof}
We must also account for several variables. The following defines a partial derivative for operator-valued holomorphic functions.
\begin{defn}\label{defn: oppartialderivative}
	Let $\Omega \subset \bC^n$ be an open subset, and let $A: \Omega \to \End(V)$ be holomorphic. Given $w \in \bC^n$, we define
	\[\Delta_w A: \Omega \subset \bC \to \End(V \oplus V) \]
	by
	\[\Delta_w A(z) = \Delta_{z'} A_{z,w}(0),\]
	where $A_{z,w}: \Omega' \subset \bC \to \End(V)$ is defined by $A_{z,w}(z') = A(z+z'w)$, and where $\Delta_{z'} A_{z,w}$ is defined as in the previous definition.
	
	We also use the same definition for holomorphic functions on complex vector spaces. Given a coordinate system $(z_1, \ldots, z_n)$ of this vector space, we may use the notation $\Delta_{z_i}$ instead of $\Delta_{e_i}$, where $e_i$ is the corresponding basis.
\end{defn}

Using this definition of derivative, we may now differentiate the principal series (see \cite{vdbSouaifi} and \cite{delormesouaifi}). Higher derivatives are obtained by successively applying $\Delta_w$.

\begin{thm}\label{thm: delsouconsequence}
	Elements of $\cJ_n$ act by $0$ on any admissible $(\fg,K)$-module $V$ whose $K$-types have length larger than $\|\sigma_n\|$.
\end{thm}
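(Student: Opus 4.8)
The plan is to feed the Delorme--Souaifi structure theorem (Theorem \ref{thm: DelSou}) into the definition of $\cJ_n$. Fix $\phi \in \cJ_n$ and an admissible $(\fg,K)$-module $V$ all of whose $K$-types have length $> \|\sigma_n\|$. Since $\phi = p_F \phi p_F$, the action of $\phi$ on $V$ factors through the finite-dimensional subspace $p_F V$, and it is this action we must show vanishes. By Theorem \ref{thm: DelSou}, $V$ is a subquotient of a direct sum $\bigoplus_i W_i$, in which each $W_i$ is a successive derivative of a principal series attached to a cuspidal pair $(Q_i,\tau_i)$ and contains only $K$-types of length $> \|\sigma_n\|$. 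Since $p_F$ is exact and commutes with direct sums, and since $\phi$ acts diagonally on $\bigoplus_i p_F W_i$, it suffices to show that $\phi$ annihilates each $W_i$.

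Fix one such summand $W = \Delta_{w_k}\cdots\Delta_{w_1}\pi^Q_{\tau,\lambda_0}$, built on $(I_\tau)^{\oplus 2^k}$. Its $K$-types are, up to multiplicity, exactly those of $I_\tau$, so every $K$-type of $I_\tau$ has length $> \|\sigma_n\|$; in particular $\|\tau\| > \|\sigma_n\|$. By the defining property of our total order on $G$-conjugacy classes of cuspidal pairs (using Theorem \ref{thm: voganthingo} so that $\|\cdot\|$ really is an invariant of the class $[\tau]$), this forces $[\tau] > [\sigma_n]$, hence $[\tau] = [\sigma_m]$ for some $m > n$, whence $\pi_{\sigma_m}(\phi) = 0$ by the definition of $\cJ_n$. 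Now the vanishing of $\lambda \mapsto \pi^{Q'}_{\tau',\lambda}(\phi)$ depends only on the $G$-conjugacy class of the cuspidal pair $(Q',\tau')$: one passes between parabolic subgroups sharing a Levi using the Knapp--Stein intertwining operators of Theorem \ref{thm: knappstein}, which are invertible --- hence kernel-preserving --- for generic $\lambda$, the conclusion extending to all $\lambda$ by holomorphy (cf.\ Lemma \ref{lem: fourier transform S(G)}); and one passes between conjugate Levi subgroups by group conjugation. Therefore $\pi^Q_{\tau,\lambda}(\phi) = 0$ for all $\lambda \in \fa^*$.

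To conclude, recall from (the proof of) Lemma \ref{lem: fourier transform S(G)} that $\lambda \mapsto \pi^Q_{\tau,\lambda}(\phi)$ is a holomorphic $\End(p_F I_\tau)$-valued function --- and we have just shown it is identically zero, so all its partial derivatives vanish. Differentiating under the integral sign (legitimate on the finite-dimensional space $p_F I_\tau$), the operator by which $\phi$ acts on $W$ equals $\bigl(\Delta_{w_k}\cdots\Delta_{w_1}(\lambda \mapsto \pi^Q_{\tau,\lambda}(\phi))\bigr)(\lambda_0)$; by the block-matrix definition of $\Delta_w$ (and Lemma \ref{lem: derivproductrule}) its blocks are partial derivatives of $\lambda \mapsto \pi^Q_{\tau,\lambda}(\phi)$ of order $\le k$ evaluated at $\lambda_0$, all of which are zero. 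Hence $\phi$ annihilates $W$, and so $V$, as required.

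I expect the one genuinely delicate point to be the claim that the vanishing of $\lambda \mapsto \pi^{Q'}_{\tau',\lambda}(\phi)$ is an invariant of the associate class $[\tau']$ --- i.e.\ the bookkeeping needed to move from the chosen representative $(P_m,\sigma_m)$ used to define $\cJ_n$ to the arbitrary cuspidal pair $(Q,\tau)$ produced by Theorem \ref{thm: DelSou}. Everything else is routine manipulation of the derivative formalism for holomorphic families of representations.
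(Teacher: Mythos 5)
Your proof is correct and follows essentially the same route as the paper: feed Theorem \ref{thm: DelSou} into the definition of $\cJ_n$, observe that $\phi$ annihilates each principal series with high-length $K$-types and hence each of its successive derivatives, and conclude via the subquotient structure. The paper's version is considerably terser (it leaves implicit the bookkeeping you carefully spell out --- passing from the chosen representative $(P_m,\sigma_m)$ to an arbitrary $G$-conjugate cuspidal pair $(Q,\tau)$ via Knapp--Stein intertwiners and group conjugation, and the observation that all derivatives of the identically zero function $\lambda\mapsto\pi^Q_{\tau,\lambda}(\phi)$ vanish), but the logical skeleton is the same.
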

\begin{proof}
	By definition of $\cJ_n$ and our total ordering on $G$-conjugacy classes $[\sigma]$, such elements $\phi \in \cJ_n$ act by zero on principal series representations whose $K$-types have length larger than $\|\sigma_n\|$.	By our definition of $\Delta^N_z$, it follows that $\phi$ acts by $0$ on the corresponding (successive) derivative representations of these principal series.
	
	According to Theorem \ref{thm: DelSou}, $V$ is a subquotient of a direct sum of derivatives of principal series representations, each of whose $K$-types have length larger than $\|\sigma_n\|$. It follows that $\phi$ acts by $0$ on $V$.
\end{proof}
Turning to the Divisibility Theorem, we need one more general lemma.
\begin{lem}\label{lem: generaldivisibility}
	Let $V$ be a finite-dimensional vector space. Let $f: \bC \to \End(V)$ be holomorphic, and let $A: \Omega \to \End(V)$ be a rational function defined and holomorphic in a neighborhood $\Omega \subset \bC$ of $0$. If $\Delta_{z}^N f(0)$ vanishes on $\ker (\Delta_{z}^N A(0))$ for each $N$, then $f(z)A(z)^{-1}$ is holomorphic in a neighborhood of $0$.
\end{lem}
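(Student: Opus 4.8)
The plan is to reduce to the case where $A$ is diagonal and then read the divisibility directly off the definition of $\Delta_z$. Since $A$ is rational with $\det A\not\equiv 0$ (otherwise $fA^{-1}$ is not even defined), its entries lie in the ring $\bC\{z\}$ of germs of holomorphic functions at $0$, a discrete valuation ring (hence a PID) with uniformizer $z$. By Smith normal form over $\bC\{z\}$ we may write, on a neighborhood of $0$,
\[
A(z) = U(z)\,D(z)\,W(z), \qquad D(z) = \diag(z^{d_1}, \ldots, z^{d_n}), \quad 0 \le d_1 \le \cdots \le d_n,
\]
where $U,W$ are holomorphic matrix-valued functions with $U(0),W(0)$ invertible. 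Put $\hat f = f W^{-1}$, holomorphic near $0$; then $fA^{-1} = \hat f\, D^{-1} U^{-1}$, which is holomorphic near $0$ if and only if $\hat f\, D^{-1}$ is. Moreover the hypothesis transfers: iterating Lemma \ref{lem: derivproductrule} shows $\Delta_z^N$ is multiplicative, and it carries an invertible holomorphic germ $M$ to an invertible operator (as $\Delta_z^N(M)\,\Delta_z^N(M^{-1}) = \Delta_z^N(\Id) = \Id$). Writing $\Delta_z^N A(0) = \Delta_z^N(U)(0)\,\Delta_z^N(D)(0)\,\Delta_z^N(W)(0)$ and using invertibility of $\Delta_z^N(U)(0)$, $\Delta_z^N(W)(0)$, one gets $\ker\Delta_z^N D(0) = \Delta_z^N(W)(0)\cdot\ker\Delta_z^N A(0)$ and $\Delta_z^N \hat f(0) = \Delta_z^N f(0)\,\Delta_z^N(W)(0)^{-1}$, so that $\Delta_z^N\hat f(0)$ kills $\ker\Delta_z^N D(0)$ for every $N$. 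Hence we may assume from the outset that $A = D$ is diagonal, and we must show that the $j$-th column of $\hat f$ vanishes to order $\ge d_j$ at $0$.

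Next I would make $\Delta_z^N h(0)$ completely explicit. Iterating the block formula defining $\Delta_z$ and indexing the $2^N$ summands of $V^{\oplus 2^N}$ by subsets $\alpha \subseteq \{1,\ldots,N\}$, with $\iota_\alpha\colon V \to V^{\oplus 2^N}$ the inclusion of the $\alpha$-th summand, one checks by induction on $N$ that for every holomorphic $h\colon \Omega \to \End(V)$ and $v \in V$,
\[
\Delta_z^N h(0)\,\iota_\alpha(v) = \sum_{\beta \subseteq \alpha} \iota_\beta\!\left(h^{(|\alpha| - |\beta|)}(0)\,v\right)
\]
(the inductive step uses $(\Delta_z^{N-1}h)' = \Delta_z^{N-1}(h')$ and the recursion $\Delta_z^N = \Delta_z \circ \Delta_z^{N-1}$). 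Applying this with $h = D$ and $v = e_j$ the $j$-th standard basis vector, and using that $(z^{d_j})^{(k)}$ vanishes at $0$ whenever $k < d_j$, we see that $\iota_\alpha(e_j) \in \ker\Delta_z^N D(0)$ whenever $|\alpha| < d_j$. Fix $N = \max_l d_l$ (the case $D = \Id$ being trivial) and, for each $j$, choose $\alpha$ with $|\alpha| = d_j - 1$. The hypothesis gives $\Delta_z^N\hat f(0)\,\iota_\alpha(e_j) = 0$; since the images of the $\iota_\beta$ are linearly independent, this forces $\hat f^{(|\alpha|-|\beta|)}(0)\,e_j = 0$ for every $\beta \subseteq \alpha$, i.e. $\hat f^{(k)}(0)\,e_j = 0$ for all $0 \le k \le d_j - 1$.

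Therefore the $j$-th column of $\hat f$ vanishes to order $\ge d_j$ at $0$, so $z^{-d_j}\hat f(z)\,e_j$ is holomorphic near $0$ for each $j$; as $D(z)^{-1}e_j = z^{-d_j}e_j$, this means $\hat f(z)\,D(z)^{-1}$ is holomorphic near $0$, and hence so is $fA^{-1} = \hat f\,D^{-1}U^{-1}$. I expect the only genuine work to be the inductive verification of the closed formula for $\Delta_z^N h(0)$ together with the bookkeeping of the $2^N$ summands; the reduction via Smith normal form and the extraction of column orders are then routine, so the main obstacle should be notational rather than conceptual.
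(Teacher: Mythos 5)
Your proof is correct, and it actually takes a more robust route than the paper's own argument. The paper's proof, read literally, only applies the kernel hypothesis to vectors of the form $\iota(v)$ for constant $v\in V$ (embedded in the last summand of $V^{\oplus 2^N}$), thereby establishing the implication ``$z^{N+1}\mid A(z)v \Rightarrow z^{N+1}\mid f(z)v$ for every constant $v$'' and then asserting that $fA^{-1}$ is holomorphic. That last step does not follow from the scalar implication alone: take $A(z)=\bigl[\begin{smallmatrix}z&1\\0&z\end{smallmatrix}\bigr]$ and $f(z)=\bigl[\begin{smallmatrix}z&0\\0&0\end{smallmatrix}\bigr]$. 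Then $z\mid Av$ forces $v\in\bC e_1$, and $z\mid fe_1$, so the scalar implication holds for all $N$, yet $fA^{-1}=\bigl[\begin{smallmatrix}1&-1/z\\0&0\end{smallmatrix}\bigr]$ has a pole. (Of course the full hypothesis of the lemma fails here: $(0,e_1,-e_1,0)\in\ker\Delta_z A(0)$ but $\Delta_z f(0)$ sends it to $(-e_1,0,0,0)\neq 0$.) So the paper's argument genuinely needs the kernel hypothesis to be tested on vectors with nontrivial mixing between the $2^N$ summands, not just on single-component vectors.

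Your Smith normal form reduction neatly sidesteps this: once $A$ is replaced by $D=\diag(z^{d_1},\ldots,z^{d_n})$, the vectors $\iota_\alpha(e_j)$ with $|\alpha|<d_j$ really do lie in $\ker\Delta_z^N D(0)$, so the single-component kernel elements carry all the information needed. The transfer of the hypothesis across the factorization $A=UDW$ via multiplicativity of $\Delta_z^N$ and invertibility of $\Delta_z^N U(0),\Delta_z^N W(0)$ is exactly the right move, and the closed formula $\Delta_z^N h(0)\iota_\alpha(v)=\sum_{\beta\subseteq\alpha}\iota_\beta(h^{(|\alpha|-|\beta|)}(0)v)$ is correct and gives the column-order bound cleanly. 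An alternative fix, closer in spirit to the paper's wording, is to let $v$ be a holomorphic \emph{germ} rather than a constant vector (applied to $v(z)=z^m A(z)^{-1}w$), and to encode its Taylor jet as $\xi_\alpha=(N-|\alpha|)!\,v^{(N-|\alpha|)}(0)/(N-|\alpha|)!$ in $V^{\oplus 2^N}$, which does land in $\ker\Delta_z^N A(0)$; but your reduction to the diagonal case is the cleaner way to make the argument airtight.
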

\begin{proof}
	Let $v \in V$ and suppose $z^{N+1}$ divides $\cA(z)v$. Then $\Delta^N_z A(0)v = 0$ and therefore $\Delta_{z}^Nf(0)v = 0$. Unpacking the definition of $\Delta_{z}^N$, this means that $(z \mapsto f(z)v)^{(k)}(0) = 0$ whenever $k \leq N$, and therefore $z^{N+1}$ divides $f(z) v$. It follows that we can define $f(z) A(z)^{-1}v$ in a neighborhood of $0$, which will be holomorphic in $z$.
\end{proof}

We must also deal with several variables. The following lemma generalizes the previous lemma to this case.
\begin{lem}\label{lem: supergeneraldivisibility}
	Let $V$ be a finite-dimensional vector space. Let $f: \bC^n \to \End(V)$ be holomorphic, and let $A: \Omega \to \End(V)$ be a rational function defined and holomorphic in a neighborhood $\Omega \subset \bC$ of $0$. Embed $\bC$ into $\bC^n$ via the first coordinate, for which we write $\lambda = (\lambda_1, \lambda') \in \bC^n$, where $\lambda_1 \in \bC$ and $\lambda' \in \bC^{n-1}$.
	
	If $\Delta_{\lambda_1}^N f(0, \lambda')$ vanishes on $\ker (\Delta_{\lambda_1}^N A(0))$ for each $N$ and for some $\lambda' \in \bC^{n-1}$, then $
	\lambda \mapsto f(\lambda)A(\lambda_1)^{-1}$ is holomorphic in a neighborhood of $(0,\lambda') \in \bC^n$.
\end{lem}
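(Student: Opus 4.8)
The plan is to deduce the statement from the one-variable Lemma~\ref{lem: generaldivisibility} by slicing in the $\lambda'$ direction and then to upgrade the resulting slicewise holomorphy to joint holomorphy by removing a singularity along the hypersurface $\{\lambda_1 = 0\}$. First, fix $\lambda'$ (in the application one in fact has the divisibility hypothesis for all $\lambda'$ in a neighborhood of the chosen point, since it arises uniformly from Theorem~\ref{thm: delsouconsequence}), and set $g_{\lambda'}(z) = f(z,\lambda')$, a holomorphic map $\bC \to \End(V)$. Unwinding Definition~\ref{defn: oppartialderivative} gives $\Delta_z^N g_{\lambda'}(0) = \Delta_{\lambda_1}^N f(0,\lambda')$ for every $N$, so the hypothesis says precisely that $\Delta_z^N g_{\lambda'}(0)$ vanishes on $\ker(\Delta_z^N A(0))$ for all $N$. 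Lemma~\ref{lem: generaldivisibility} applied to $(g_{\lambda'},A)$ then shows that $z\mapsto f(z,\lambda')A(z)^{-1}$ extends holomorphically across $z=0$; since the only obstruction to holomorphy of $f(z,\lambda')A(z)^{-1}$ away from $0$ is the zero set of $\det A$, which does not involve $\lambda'$, this extension is valid on a fixed disc $\{|\lambda_1| < \delta\}$ independent of $\lambda'$.

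For the globalization, note that $A$ rational implies, by Cramer's rule, that $A(\lambda_1)^{-1}$ equals $(\det A(\lambda_1))^{-1}$ times a matrix holomorphic in $\lambda_1$ near $0$, and $\det A(\lambda_1) = \lambda_1^m u(\lambda_1)$ near $0$ with $u(0)\neq 0$ for some integer $m\geq 0$; hence $\lambda_1^m A(\lambda_1)^{-1}$ is holomorphic near $\lambda_1 = 0$. Consequently $H(\lambda) := f(\lambda)\bigl(\lambda_1^m A(\lambda_1)^{-1}\bigr)$ is holomorphic on a polydisc $D\times U'$ about $(0,\lambda')$, and we expand $H(\lambda_1,\lambda') = \sum_{k\geq 0} h_k(\lambda')\lambda_1^k$ with each $h_k$ holomorphic on $U'$. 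For each fixed $\mu'\in U'$, the first step shows $H(\,\cdot\,,\mu') = \lambda_1^m\bigl(f(\,\cdot\,,\mu')A(\,\cdot\,)^{-1}\bigr)$ is divisible by $\lambda_1^m$, so $h_0(\mu') = \cdots = h_{m-1}(\mu') = 0$; since $\mu'$ was arbitrary in $U'$, the functions $h_0,\dots,h_{m-1}$ vanish identically there. Hence $H(\lambda) = \lambda_1^m\widetilde H(\lambda)$ with $\widetilde H(\lambda) = \sum_{k\geq 0} h_{k+m}(\lambda')\lambda_1^k$ holomorphic near $(0,\lambda')$, and $\widetilde H$ agrees with $f(\lambda)A(\lambda_1)^{-1}$ off $\{\lambda_1 = 0\}$, so the latter extends holomorphically as required. (Alternatively, one can avoid the Taylor bookkeeping by writing $f(\lambda_1,\lambda')A(\lambda_1)^{-1}$ as a Cauchy integral over a small circle in the $\lambda_1$-plane avoiding the zeros of $\det A$; the integrand is jointly holomorphic in the remaining variables, so the integral is visibly a joint holomorphic extension.)

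I expect the globalization to be the main obstacle: the one-variable lemma only constrains $f(\,\cdot\,,\lambda')A(\,\cdot\,)^{-1}$ on each individual slice $\{\lambda'=\text{const}\}$, so to obtain joint holomorphy one needs the slicewise conclusion to hold throughout a neighborhood of the chosen point---equivalently, the divisibility hypothesis should be available for all nearby $\lambda'$, which is how the lemma is applied. The reduction in the first step, by contrast, is essentially formal once Definition~\ref{defn: oppartialderivative} is unwound.
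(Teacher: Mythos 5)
Your proof is correct, and it follows the same basic route the paper intends---slice in $\lambda_1$, invoke Lemma~\ref{lem: generaldivisibility}, then globalize across $\{\lambda_1=0\}$---but it supplies two things the paper's one-sentence proof elides, and one of them is essential. Your parenthetical remark that the hypothesis must be available for all $\lambda'$ near the base point (not merely at a single $\lambda'$) is not a cosmetic strengthening: as literally written the lemma is false. Take $n=2$, $V=\bC$, $A(\lambda_1)=\lambda_1$, and $f(\lambda_1,\lambda_2)=\lambda_2-\lambda_0'$. Since $f$ is independent of $\lambda_1$ and $f(0,\lambda_0')=0$, every $\Delta_{\lambda_1}^N f(0,\lambda_0')$ vanishes identically, so the hypothesis holds at that one $\lambda'$; but $f(\lambda)A(\lambda_1)^{-1}=(\lambda_2-\lambda_0')/\lambda_1$ has a pole along the hypersurface $\{\lambda_1=0\}$ and is not holomorphic near $(0,\lambda_0')$. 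You are also right that in the only place the lemma is used (the Divisibility Theorem) the hypothesis is established uniformly for all $\lambda$ with $\Re\lambda_\alpha\ge 0$, so the version you prove is exactly the one that is needed.

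The globalization step is the other content the paper compresses away. Its assertion ``if $\lambda_1^N$ divides $A(\lambda_1)v$ then $\lambda_1^N$ divides $f(\lambda)v$'' only yields, from the pointwise hypothesis, divisibility of the slice $\lambda_1\mapsto f(\lambda_1,\lambda_0')v$; passing to joint divisibility near $(0,\lambda_0')$ is precisely what your Taylor-coefficient argument (showing $h_0=\cdots=h_{m-1}\equiv 0$ on $U'$) or, equivalently, the Cauchy-integral argument delivers. Both versions are correct---in the Cauchy version one should choose the radius $r$ small enough that $\lambda_1=0$ is the only zero of $\det A$ in $\{|\zeta|\le r\}$, so that the contour integral reproduces $f(\lambda)A(\lambda_1)^{-1}$ without extra residues---and either one makes the lemma rigorous as stated with the corrected quantifier.
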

This follows from the previous lemma. The point is that, if $\lambda_1^{N}$ divides $A(\lambda_1)v$, then $\lambda_1^N$ divides $f(\lambda)v$ (where $v \in V$).

\begin{proof}[Proof of the Divisibility Theorem] See \cite[(3.8)]{delormePW}. Fix $\phi \in \cJ_n$ and set $f = \pi_\sigma(\phi)$. Let $P$ and $Q$ be adjacent parabolic subgroups containing $MA$. Note that (defining $f^P$ as in Definition \ref{defn: Esigma})
	\[f^P(\lambda) = \pi^P_{\sigma,\lambda}(\phi).\]
	
	We treat the unique reduced $\alpha \in \Delta^+_P \cap -\Delta^+_Q$ as the first coordinate of $\fa^*$, where we extend $\{\alpha\}$ to some basis. We note that $\cA(Q,P,\lambda)$ depends only on $\lambda_\alpha$ (see Definition \ref{defn: adjacent parabolics}).
	We must show that $f^P(\lambda)\cA(Q,P,\lambda_\alpha)^{-1}$ is holomorphic in a neighborhood of each $\lambda \in \overline{\fa^*_{P,+}}$.
	
	According to Lemma \ref{lem: supergeneraldivisibility}, it suffices to prove that $\Delta^N_{\lambda_\alpha} f^P(\lambda)$ vanishes on the space $\ker(p_F \Delta^N_{\lambda_\alpha}\cA(Q,P,\lambda_\alpha)) = p_F \ker(\Delta^N_{\lambda_\alpha}\cA(Q,P,\lambda_\alpha))$ whenever $\Re \lambda_\alpha \geq 0$. Now, the $(\fg,K)$-module \[\ker(\Delta^N_{\lambda_\alpha}\cA(Q,P,\lambda_\alpha))\] is a submodule of $(\Delta^N_{\lambda_\alpha} \pi^P_{\sigma_n, \lambda}, I_{\sigma_n}^{\oplus N+1})$. Moreover, because $\cA(Q, P, \lambda_\alpha)$ is constant and nonzero on the minimal $K$-types of $I_{\sigma_n}$, the operators $\Delta^N_{\lambda_\alpha} \cA(Q,P,\lambda_\alpha)$ are also constant and nonzero on the minimal $K$-types of $(I^P_{\sigma_n})^{\oplus N + 1}$. Therefore, $\ker(\Delta^N_{\lambda_\alpha}\cA(Q,P,\lambda_\alpha))$ does not contain any $K$-type in $A(\sigma_n)$, and therefore its $K$-types have length larger than $\|\sigma_n\|$. By Theorem \ref{thm: delsouconsequence}, $\phi$ acts by $0$ on $\ker(\Delta^N_{\lambda_\alpha}\cA(Q,P,\lambda_\alpha))$. But $\phi$ acts precisely by $\Delta^N_{\lambda_\alpha} f^P(\lambda)$, so we are done. 
\end{proof}

% !TeX encoding = UTF-8

\newcommand{\tmu}{\tilde{\mu}}
\newcommand{\tv}{\tilde{v}}
\newcommand{\tphi}{\tilde{\phi}}

\newcommand{\tPhi}{\widetilde{\Phi}}

\renewcommand{\tPsi}{\widetilde{\Psi}}
\subsection{Proof of the Factoring Theorem}
Our aim is to prove that
\begin{equation}\label{eq: facpropeq}
	\PW_{\text{divis}}(\fa^*, \End(p_F I_\sigma)) \subseteq \sum_{\mu,\nu \in \minK{\sigma}} \pi_\sigma(R(\fg,F) p_{\mu}\cS(G,\minK{\sigma})p_\nu R(\fg,F))
\end{equation}
(recall Definition \ref{defn: Esigma}).

Let $u \in \PW_{\text{divis}}(\fa^*, \End(p_F I_\sigma))$. We recall that we have fixed a representative $(P_n = MAN, \sigma = \sigma_n)$ of $[\sigma]$, and that for any parabolic subgroup $P$ with Levi subgroup $MA$, the function
\[u^P(\lambda) = \cA(P, P_n, \lambda)u(\lambda)\cA(P_n, P, \lambda)\]
defines an element of $\PW(\fa^*, \End(p_FI_\sigma))$.

{Most details of the proof of the Factoring Theorem can be found in \cite[Section 2]{delormePW}, which treats $C_c^\infty(G)$ instead of $\cS(G)$. As a result, we will state only what is necessary to cite Delorme's results. However, for the benefit of the reader, we briefly outline the details found in \cite[Section 2]{delormePW}.
	
	We wish to find a decomposition
	\[u = \sum \phi_i M_{ij} \tilde{\phi}_j,\] where $\phi_i \in \pi_\sigma(R(\fg,F)p_{\mu_i})$, $\tilde{\phi_j} \in \pi_\sigma(p_{\tmu_j}R(\fg,F)))$, $M_{ij} \in \pi_\sigma(p_{\mu_i}\cS(\fg,\minK{\sigma})p_{\tmu_j})$, and $\mu_i, \tmu_j \in A(\sigma)$. If we fix any choice of such $\phi_i, \tilde{\phi}_j$, then this becomes a linear algebra equation with respect to (the matrix components of) $M_{ij}$, over the field of meromorphic functions on $\fa^*$, where we are treating $M_{ij}$ as matrices via 
	\[\pi_\sigma(p_{\mu_i}\cS(G,\minK{\sigma})p_{\tmu_j})= \PW(\fa^*)^{W_\sigma^0}(\hat{r}_{\mu_i\tmu_j}) \otimes \Hom(p_{\mu_i}I_\sigma, p_{\tmu_j}I_\sigma)\]
	as a consequence of Theorem \ref{thm: PW on lowktypes} and \eqref{eq: minKfulldecomp}.
	
	By use of Cramer's rule, we obtain $M_{ij}$ as meromorphic functions of the form $p/q$, where $p \in \PW(\fa^*)$, and $q$ is a determinant term which depends on $\phi_i, \tilde{\phi}_j$. By understanding these determinants, we find that $M_{ij}$ is holomorphic (hence in $\PW(\fa^*)$) for certain $u$. We then decompose $u$ into pieces where the above is possible. We will have obtained
	$u = \sum \phi_i N_{ij} \tilde{\phi_j}$
	where $N_{ij} \in \PW(\fa^* )\otimes \Hom(I_\sigma(\mu_j), I_\sigma(\mu_i))$. Finally, to obtain the $W_\sigma$-invariance, we must average using $\cA(P_n,w,\lambda)$ for $w \in W_\sigma$, which will provide us with the desired decomposition of $u$.}
%==================================
%==================================

\begin{defn}\label{defn: factoring data}
	Let $l = \dim p_F I_\sigma$.
	
	By \textit{left factoring data}, we refer to a list of tuples $(\mu_i, v_i, \phi_i)_{i=1}^{l}$ such that $\mu_i \in \minK{\sigma}$, $v_i$ is a unit vector in $p_{\mu_i} I_\sigma$, and $\phi_i \in \pi_\sigma(R(\fg,F)p_{\mu_i})$. Corresponding to this data is the vector space $V = \bigoplus \bC v_i$.
	
	By \textit{right factoring data}, we refer to a list of tuples $(\tmu_j, \tv_j,\tphi_j)_{j=1}^{l}$ with $\tmu_j \in \minK{\sigma}$, $\tv_j$ a unit vector in $p_{\tmu_j}I_\sigma$, and $\tphi_j \in \pi_\sigma(p_{\tmu_j}R(\fg,F))$. Correspondingly, we define $\widetilde{V} = \bigoplus \bC \tv_j$.
	
	Given a set of left factoring data $(\mu_i, v_i, \phi_i)$ and parabolic subgroup $P$ with Levi subgroup $MA$, define the operator
	\[\Phi^P = \Phi^P(\lambda): V \to p_FI_{\sigma} \]
	by $\Phi^P(\lambda)(v_i) = \pi_{\sigma,\lambda}^P(\phi_i)v_i$. Given right factoring data $(\tmu_j, \tv_j, \tphi_j)$, we define
	\[\tPhi^P = \tPhi^P(\lambda) : p_FI_\sigma \to \widetilde{V} \]
	by $\tPhi^P(\lambda)(\psi)= \sum_j \langle \pi^P_{\sigma,\lambda}(\tphi_j) \psi, \tv_j\rangle_{L^2(K)} \tv_j$.
	
	Finally, given left and right factoring data, we define the ``elementary matrices''
	\[E_{ij}: \widetilde{V} \to {V}\]
	by $E_{ij} (\tv_j) = v_i$ and $E_{ij}(\tv_k) = 0$ for $k \neq j$.
\end{defn}
\begin{lem}[{See \cite[Lemmas 2, 3, and 9]{delormePW}}]\label{lem: specializeddelorme}
	Let $P$ be a parabolic subgroup with Levi subgroup $MA$.
	\begin{enumerate}
		\item There exists a polynomial $b^P \in \bC[\fa^*]$, nonzero on $-\overline{\fa^*_{P,+}}$, such that
		\[\det(p_F \cA(\overline{P}, P, \lambda)) = c\frac{b^P(\lambda)}{\bbar^P(-\lambda)},\]
		where $\overline{b}^P(\lambda) = \overline{b^P(\overline{\lambda})}$, and $c \in \bC$ is a constant with modulus $1$
		\item There exists polynomials $\Psi^P, \tPsi^P \in \bC[\fa^*]$ such that
		\begin{align*}
			\det \Phi^P(\lambda) = \Psi^P(\lambda) \bbar^P(-\lambda), \quad \det \tPhi^P(\lambda) = \tPsi^P(\lambda) b^P(\lambda).
		\end{align*}
		Moreover, if $Q$ is another parabolic subgroup, then $\Psi^P$ and $\Psi^Q$ are related by a nonzero constant (and similarly for $\tPsi^P$ and $\tPsi^Q$).
		\item The span of $\Psi^P$ across all left factoring data equals $\bC[\fa^*]^{W_\sigma^0}$. The span of $\tilde{\Psi}^P$ across all right factoring data equals $\bC[\fa^*]^{W_\sigma^0}$.
	\end{enumerate}
\end{lem}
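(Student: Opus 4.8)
The plan is to observe that Lemma~\ref{lem: specializeddelorme} is a statement purely about the Hecke algebra $R(\fg,F)$, the principal series $(\pi^P_{\sigma,\lambda},I_\sigma)$ with its minimal $K$-types, and the normalized Knapp--Stein operators $\cA(Q,P,\lambda)$; none of these objects distinguishes $\cS(G)$ from $C_c^\infty(G)$, so the statement and proof coincide with Delorme's \cite[Lemmas 2, 3, and 9]{delormePW}, which I would cite. For part (1), I would reduce to split rank one: fixing a minimal gallery $P = P^{(0)}, P^{(1)}, \dots, P^{(r)} = \overline P$ of parabolics with Levi $MA$ and consecutive terms adjacent, the cocycle identity $\cA(R,Q,\lambda)\cA(Q,P,\lambda) = \cA(R,P,\lambda)$ of Theorem~\ref{thm: norminterwiner} writes $\cA(\overline P, P, \lambda)$ as a product of the $\cA(P^{(k+1)}, P^{(k)}, \lambda)$, each of which depends only on $\lambda_{\alpha_k}$ and is the normalized intertwining operator of the split-rank-one subgroup $G_{\alpha_k}$ (the remark after Definition~\ref{defn: adjacent parabolics}). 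In rank one, Harish-Chandra's $c$-function computation together with the Knapp--Stein normalization expresses this operator, on each block of $p_F I_\sigma$ normalized along a minimal $K$-type, as a unimodular constant times a ratio $(\lambda_\alpha - a)/(\lambda_\alpha + a)$; collecting numerators gives a polynomial $b^P$ whose zeros all lie off $-\overline{\fa^*_{P,+}}$ (where the integral in Theorem~\ref{thm: knappstein} converges), and unitarity of $\cA$ on $i\fa^*_0$ forces the denominator to be exactly $\bbar^P(-\lambda)$.

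For part (2), each column $\pi^P_{\sigma,\lambda}(\phi_i)v_i$ of $\Phi^P(\lambda)$ is polynomial in $\lambda$ by \cite[Proposition 1]{delormeHC} (the statement recalled above that $\lambda\mapsto\langle\pi_{\sigma,\lambda}(h)\varphi,\psi\rangle$ is polynomial for $h\in R(\fg,K)$), so $\det\Phi^P$ is a polynomial, and likewise $\det\tPhi^P$. Using the intertwining relations $\cA(P,P_n,\lambda)\pi^{P_n}_{\sigma,\lambda}(h) = \pi^P_{\sigma,\lambda}(h)\cA(P,P_n,\lambda)$ and the fact that $\cA(P_n,P,\lambda)$ acts on each $v_i\in p_{\mu_i}I_\sigma$ by a nonzero $\lambda$-independent scalar (Theorem~\ref{thm: norminterwiner}), I would write $\Phi^P(\lambda) = \cA(P,P_n,\lambda)\,\Phi^{P_n}(\lambda)\,D$ with $D$ a constant invertible diagonal matrix; comparing determinants with part (1) exhibits the polynomial $\det\Phi^P(\lambda)$ as $\bbar^P(-\lambda)$ times a polynomial $\Psi^P$, the complementary numerator from part (1) being coprime to $\bbar^P(-\lambda)$ since their zero loci are separated in the rank-one coordinate by the imaginary axis. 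The statement for $\tPhi^P$, $\tPsi^P$ is symmetric, via right factoring data and the transposed intertwining relations. That $\Psi^P$ and $\Psi^Q$ differ by a nonzero constant then follows from the cocycle relation $\cA(Q,P,\lambda)\cA(P,P_n,\lambda) = \cA(Q,P_n,\lambda)$, the $\lambda$-independence of $\cA$ on minimal $K$-types, and the coprimality just used.

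For part (3), that each $\Psi^P$ lies in $\bC[\fa^*]^{W_\sigma^0}$ follows from the $W_\sigma^0$-equivariance of the construction, using the Hecke-algebra (polynomial) analogue of Theorem~\ref{thm: PW on lowktypes}, namely $\pi_\sigma(p_\mu R(\fg,F)p_\nu) = \bC[\fa^*]^{W_\sigma^0}(\hat r_{\mu\nu})\otimes\Hom(p_\nu I_\sigma, p_\mu I_\sigma)$, proved exactly as \cite[(1.38)]{delormePW}; surjectivity onto all of $\bC[\fa^*]^{W_\sigma^0}$ is \cite[Lemma 9]{delormePW}, obtained by choosing left factoring data whose determinants $\det\Phi^P$ run, after expansion, over a spanning set of $\bC[\fa^*]^{W_\sigma^0}\,\bbar^P(-\lambda)$ and then dividing out $\bbar^P(-\lambda)$. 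The step I expect to be the main obstacle is part (1): pinning down $b^P$ precisely — its zeros off $-\overline{\fa^*_{P,+}}$ and the exact denominator $\bbar^P(-\lambda)$ — needs the explicit rank-one $c$-function calculation, not merely the soft fact that $\det\cA$ is rational. Since that calculation is classical Harish-Chandra--Knapp--Stein theory and insensitive to replacing $C_c^\infty(G)$ by $\cS(G)$, I would quote Delorme for it; parts (2) and (3) are then linear algebra — Cramer's rule and Vandermonde-type arguments over the field of rational functions on $\fa^*$ — and transfer verbatim.
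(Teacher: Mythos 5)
Your proposal takes the same approach as the paper, which provides no proof of this lemma but simply cites Delorme's Lemmas 2, 3, and 9, noting the present statement is a specialization of those results. Your justification — that the lemma involves only objects (the Hecke algebra, principal series, normalized intertwining operators) that do not distinguish $\cS(G)$ from $C_c^\infty(G)$, so Delorme's proof applies verbatim — is exactly the right reason for the citation, and your sketch of Delorme's argument is accurate in outline but goes beyond what the paper itself records.
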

The above lemma is a specialization of \cite[Lemmas 2, 3, and 9]{delormePW}, which we have provided for context in order to state the next lemma, as well as to prove the Factoring theorem.

\begin{lem}\label{lem: main factoring lemma}
	Let $u \in \PW_{\emph{\text{divis}}}(\fa^*, \End(p_F I_\sigma))$. Fix factoring data $(\mu_i, v_i, \phi_i)$ and $(\tmu_j, \tv_j, \tphi_j)$ with corresponding $\Phi^P, \tPhi^P$. There exists functions
	\[M^P_{ij} = M^P_{ij}(\lambda) \in  \PW(\fa^*)\]
	such that, with $M^P = \sum_{i,j} M^P_{ij} E_{ij}$, 
	\begin{equation}\label{eq: factoring out u into M}
		\Psi^P \tPsi^P u^P = \Phi^P M^P \tPhi^P.
	\end{equation}
\end{lem}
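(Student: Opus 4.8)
The plan is to \emph{solve} \eqref{eq: factoring out u into M} for $M^P$ by linear algebra over the field of meromorphic functions on $\fa^*$, and then to show — using the divisibility conditions built into $\PW_{\text{divis}}(\fa^*,\End(p_F I_\sigma))$ together with Lemma \ref{lem: polydivision} — that the solution actually has entries in $\PW(\fa^*)$. This is the adaptation to $\cS(G)$ of the argument in \cite[Section 2]{delormePW}: the only structural change is that the polynomial division of Fourier transforms of $C_c^\infty$--functions used there is now supplied by Lemma \ref{lem: polydivision}.

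Concretely, fix bases of $V$, $\widetilde V$ and $p_F I_\sigma$ (say $V=\bigoplus\bC v_i$ and $\widetilde V=\bigoplus\bC\tv_j$, as in Definition \ref{defn: factoring data}), so that $\Phi^P(\lambda)$ and $\tPhi^P(\lambda)$ become $l\times l$ matrices, $l=\dim p_F I_\sigma$. By Delorme's lemma that $R(\fg,K)$ acts on $I_\sigma$ by operator-valued polynomials in $\lambda$ (\cite[Proposition 1]{delormeHC}), both matrices have polynomial entries, and by Lemma \ref{lem: specializeddelorme}(2) their determinants are the nonzero polynomials $\Psi^P\,\bbar^P(-\,\cdot\,)$ and $\tPsi^P\,b^P$. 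Hence $\Phi^P$ and $\tPhi^P$ are invertible over meromorphic functions, and I would set
\[
M^P:=(\Phi^P)^{-1}\,\Psi^P\tPsi^P\,u^P\,(\tPhi^P)^{-1},
\]
a meromorphic $\Hom(\widetilde V,V)$--valued function; expanding it in the basis $\{E_{ij}\}$ gives meromorphic $M^P_{ij}$ which satisfy \eqref{eq: factoring out u into M} tautologically. Cramer's rule rewrites this, with $\operatorname{adj}(\cdot)$ the cofactor matrix, as
\[
M^P=\frac{\operatorname{adj}(\Phi^P)\;u^P\;\operatorname{adj}(\tPhi^P)}{b^P(\lambda)\,\bbar^P(-\lambda)},
\]
the factors $\Psi^P$ and $\tPsi^P$ having cancelled. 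The numerator is a polynomial matrix times $u^P\in\PW(\fa^*,\End(p_FI_\sigma))$ times a polynomial matrix, hence lies in $\PW(\fa^*,\Hom(\widetilde V,V))$, since any polynomial multiple of a Paley--Wiener function is again Paley--Wiener. By Lemma \ref{lem: polydivision} applied entrywise, it therefore suffices to prove that $b^P(\lambda)\,\bbar^P(-\lambda)$ divides this numerator; then $M^P$, and with it each $M^P_{ij}$, lies in $\PW(\fa^*)$.

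The remaining point — the divisibility — is where the hypothesis $u\in\PW_{\text{divis}}$ is used, exactly as in \cite[Section 2]{delormePW}. The degeneracy of $\tPhi^P$ along the zero divisor of $b^P$, beyond the ``universal'' factor $\tPsi^P$, is by Lemma \ref{lem: specializeddelorme}(1) governed by the normalized Knapp--Stein operator $p_F\cA(\overline P,P,\lambda)$, whose determinant is $c\,b^P(\lambda)/\bbar^P(-\lambda)$; symmetrically, the degeneracy of $\Phi^P$ along the zero divisor of $\bbar^P(-\,\cdot\,)$ is governed by $\cA(P,\overline P,\lambda)=\cA(\overline P,P,\lambda)^{-1}$. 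On the other side, Property~3 of $\PW_{\text{divis}}$ — propagated from adjacent parabolic pairs to the pair $(\overline P,P)$ through a chain of adjacency steps and Properties~1, 2 — says precisely that $u^P(\lambda)\,p_F\cA(\overline P,P,\lambda)^{-1}$, and its $W_\sigma$--translates, extend holomorphically across the reducibility walls where $\cA(\overline P,P,\lambda)$ is singular. Working at a generic point of the zero divisor of $b^P$ (resp.\ of $\bbar^P(-\,\cdot\,)$) and comparing the order of vanishing forced on $u^P\operatorname{adj}(\tPhi^P)$ (resp.\ on $\operatorname{adj}(\Phi^P)u^P$) by Lemma \ref{lem: specializeddelorme}(1),(2) with the vanishing of $u^P$ there extracted from Property~3 — and combining the contributions at points lying on both divisors — yields $b^P(\lambda)\,\bbar^P(-\lambda)\mid\operatorname{adj}(\Phi^P)\,u^P\,\operatorname{adj}(\tPhi^P)$, as required.

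I expect the genuinely substantive step to be the divisibility in the third paragraph: tracking the ``non-free'' part of $\det\Phi^P$ and $\det\tPhi^P$, identifying it with the singular locus of the normalized intertwining operators through Lemma \ref{lem: specializeddelorme}, and verifying that the three conditions defining $\PW_{\text{divis}}$ supply exactly the vanishing demanded by $b^P\bbar^P(-\,\cdot\,)$. Everything else — inverting over meromorphic functions, Cramer's rule, and the passage from holomorphic to Paley--Wiener via Lemma \ref{lem: polydivision} — is routine bookkeeping.
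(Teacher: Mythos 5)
Your proposal is correct and follows essentially the same route as the paper: set up the meromorphic solution via Cramer's rule, observe that the numerator lies in $\PW(\fa^*, \Hom(\widetilde V, V))$, and then reduce to Lemma~\ref{lem: polydivision} after establishing the polynomial divisibility. Your formula
\[
M^P=\frac{\operatorname{adj}(\Phi^P)\,u^P\,\operatorname{adj}(\tPhi^P)}{b^P(\lambda)\,\bbar^P(-\lambda)}
\]
is exactly what the paper obtains after cancelling the $\Psi^P\tPsi^P$ factors, and your observation that the Hecke-algebra elements act by operator-valued polynomials is the ingredient that makes the adjugates polynomial.

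The one place where you diverge from the paper in emphasis --- and where your argument remains only a sketch --- is the divisibility step in your third paragraph. The paper disposes of this by citing \cite[Lemma~7]{delormePW}, and afterwards gives a reader-friendly summary. That argument does \emph{not} work divisor by divisor; instead it is run chamber by chamber: for each parabolic $P'$ with Levi $MA$ one shows holomorphicity of $\Psi^{P'}\tPsi^{P'}N^{P'}$ on $\overline{\fa^*_{P',+}}$, then uses that $N^{P'}$ and $N^{Q'}$ are related by intertwiners which are constant nonzero scalars on minimal $K$-types (so the $\Psi^{P'}\tPsi^{P'}N^{P'}$ differ by nonzero constants) to glue across chambers. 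The trick on a fixed chamber is to replace $\tPhi^{P'}$ by $\tPhi^{P'}\cA(\overline{P'},P',\lambda)^{-1}$, which is again polynomial with determinant $\tPsi^{P'}\bbar^{P'}(-\lambda)$, and then both $\bbar^{P'}(-\lambda)$ factors in the Cramer denominator are \emph{nonvanishing} on $\overline{\fa^*_{P',+}}$; this makes $N^{P'}_{ij}=q_{ij}/(\Psi^{P'}\tPsi^{P'})$ with $q_{ij}$ holomorphic there, and there is never any need to compare orders of vanishing along the zero divisors of $b^P$ and $\bbar^P(-\,\cdot\,)$ or to worry about their overlap. Your ``generic point of the divisor'' heuristic is in the right spirit but is harder to make precise (you would need to show that the forced vanishing of $u^P$ coming from Property~3 saturates the adjugate factors, including at common zeros of $b^P$ and $\bbar^P(-\,\cdot\,)$), so I would recommend following Delorme's chamber-by-chamber argument instead.
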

This is the analogue of \cite[Lemma 6]{delormePW}. If we ignore the rapidly decreasing condition (i.e. that $M_{ij}^P \in \PW(\fa^*)$), then this states that $M_{ij}^P$ is a holomorphic function, which is \cite[Lemma 7]{delormePW}. We do not prove that $M^P_{ij} \in \PW(\fa^*)^{W_\sigma^0}(\hat{r}_{\mu_i \mu_j})$ (stated analogously in \cite[Lemma 6]{delormePW}), because this is not true in general. Instead, the issue of $W_\sigma$-invariance will be dealt with in the proof of the Factoring Theorem.

\begin{proof}
	By \cite[Lemma 7, p. 1013]{delormePW}, if $N^P$ denotes the solution to
	\[u^P = \Phi^P N^P \tPhi^P,\]
	then $\Psi^P \tPsi^P N^P$ is a holomorphic function (this only uses the divisibility properties listed in Definition \ref{defn: Esigma}). Now, according to Cramer's rule, there exists functions $p_{ij}^P \in \PW(\fa^*)$ such that, writing $N^P = \sum E_{ij} N_{ij}^P$, then
	\[N_{ij}^P(\lambda) = \frac{p_{ij}^P(\lambda)}{\det \Phi^P(\lambda) \det \tPhi^P(\lambda)}.\]
	Then the fact that $\Psi^P \tPsi^P N_{ij}^P$ is holomorphic implies that $\det \Phi^P(\lambda) \det \tPhi^P(\lambda)$ divides $\Psi^P \tPsi^P p_{ij}^P$. According to the Lemma \ref{lem: polydivision} on polynomial division, this implies that $\Psi^P \tPsi^P N_{ij}^P \in \PW(\fa^*)$.
	
	Now, $M_{ij} = \Psi^P \tPsi^P N_{ij}^P$ is the solution to $\Psi^P \tPsi^P u^P = \Phi^P M^P \tPhi^P$, so we have shown $M_{ij} \in \PW(\fa^*)$.
\end{proof}

For completeness, and to benefit the reader, we will summarize the proof of \cite[Lemma 7]{delormePW}. The claim is that $\Psi^P \tPsi^P N^P$ is holomorphic. It suffices to show that $\Psi^P \tPsi^P N^P$ is holomorphic on $\overline{\fa^*_{P,+}}$ for each $P$ (this is because each $N^P$ and $N^Q$ are intertwined by $\cA(Q,P,\lambda)$, which is nonzero and independent of $\lambda$ when acting on minimal $K$-types).

By writing $\cA(\overline{P},P,\lambda)$ as a product of $\cA(Q,R,\lambda)$ for adjacent $Q,R$, Property 3 of Definition \ref{defn: Esigma} implies that $u^P(\lambda) \cA(\overline{P}, P,\lambda)^{-1}$ extends to a holomorphic function on $\overline{\fa^*_{P,+}}$. Now, with notation as in the above proof,
\[u^P(\lambda)\cA(\overline{P}, P,\lambda)^{-1} = \Phi^P N^P \tPhi^P\cA(\overline{P}, P,\lambda)^{-1}. \]
The rational function $\tPhi^P\cA(\overline{P}, P,\lambda)^{-1}$ turns out to be a polynomial in $\lambda$ (we can commute the action of $\cA(\overline{P}, P, \lambda)^{-1}$ over to $\widetilde{V}$), and
\[\det \left(\tPhi^P\cA(\overline{P}, P,\lambda)^{-1}\right) = \tPsi^P(\lambda) \overline{b}^P(-\lambda).\]
Therefore, by Cramer's rule, and since $\bbar^P(-\lambda)$ is nonzero on $\overline{\fa^*_{P,+}}$,
\[N^P_{ij} = \frac{q_{ij}}{\Psi^P \tPsi^P},\]
where $q_{ij}$ is holomorphic on $\overline{\fa^*_{P,+}}$. This proves that $\Psi^P \tPsi^P N^P$ is holomorphic on $\overline{\fa^*_{P,+}}$.
\begin{proof}[Proof of the Factoring Theorem]
	Fix an element $u \in \PW_{\text{divis}}(\fa^*, \End(p_F I_\sigma))$. In the following, we will only consider $P=P_n$ and omit the corresponding superscripts (for example, we write $\Phi = \Phi^{P_n}$). Using Lemma \ref{lem: specializeddelorme}, choose several left and right factoring data $(\mu_i^{(m)}, v_i^{(m)}, \phi_i^{(m)})$ and $(\tmu_j^{(r)}, \tv_j^{(r)}, \tphi_j^{(r)})$ such that
	\[\sum_m\Psi_m \equiv 1, \quad \sum_r \tPsi_r \equiv 1.\]
	Let $M^{(m,r)}$ be the corresponding matrices as in Lemma \ref{lem: main factoring lemma}, and let $E_{ij}^{(m,r)} \in \Hom(I_\sigma(\tmu_j^{(r)}),I_\sigma(\mu_i^{(m)}))$ be corresponding ``elementary matrices'' between left and right factoring data. Then
	\begin{align*}
		u &= \sum \Psi_m u \tPsi_r = \sum \Phi_m M^{(m,r)} \tPhi_r\\
		&= \sum \pi_\sigma(\phi_i^{(m)}) M_{ij}^{(m,r)}E_{ij}^{(m,r)} \pi_\sigma(\tphi_j^{(r)}).
	\end{align*}
	
	We have shown that
	\[u \in  \pi_\sigma(R(\fg,F)p_{A(\sigma)}) \cdot \PW(\fa^*, \End(p_{\minK{\sigma}} I_\sigma))\cdot \pi_\sigma(p_{A(\sigma)}R(\fg,F)).\]
	Now, $u$ commutes with the action of $W_\sigma$ given by $\cA(P_n, w, \lambda)$ (this is Property 2 of Definition \ref{defn: Esigma}), and so if we average by this action we obtain
	\begin{align*}
		u &\in  \left[\pi_\sigma(R(\fg,F)p_{A(\sigma)}) \cdot \PW(\fa^*, \End(p_{\minK{\sigma}} I_\sigma))\cdot \pi_\sigma(p_{A(\sigma)}R(\fg,F))\right]^{W_\sigma}\\
		&= \pi_\sigma(R(\fg,F)p_{A(\sigma)}) \cdot \PW(\fa^*, \End(p_{\minK{\sigma}} I_\sigma))^{W_\sigma}\cdot \pi_\sigma(p_{A(\sigma)}R(\fg,F))\\
		&= \pi_\sigma(R(\fg, F)\cS(G,A(\sigma))R(\fg,F)),
	\end{align*}
	where Theorem \ref{thm: PW on lowktypes} is used in the second equality, and for the first equality we note that elements of $\pi_\sigma(R(\fg, F))$ commute with the action of $W_\sigma$.
	\end{proof}

\newpage

\bibliographystyle{alpha}
\bibliography{references}

\end{document}